\newtheorem{Theorem}{Theorem}[section]
\newtheorem{Lemma}[Theorem]{Lemma}
\newtheorem{Corollary}[Theorem]{Corollary}
\newtheorem{Proposition}[Theorem]{Proposition}
\newtheorem{Remark}[Theorem]{Remark}
\def\height{\mbox{ht}\,}
\def\heigh{\tiny{\mbox{ht}}\,}
\def\Spec{\mbox{Spec}\,}
\def\q{\mathfrak{q}}
\def\m{\mathfrak{m}}
\def\e{e_{HK}}
\def\F{F^e_*}
\def\R{\mathbb{R}}
\def\N{\mathbb{N}}
\def\CC{\mathscr{C}}
\def\DD{\mathscr{D}}
\def\Supp{\mbox{Supp}\,}
\def\Ann{\mbox{Ann}\,}
\def\Hom{\mbox{Hom}}
\title{Uniform Bounds in F-finite Rings and Lower Semi-Continuity of the F-Signature}
\author{Thomas Polstra } 
\address{Thomas Polstra, Department of Mathematics, University of Missouri-Columbia, Columbia, MO 65211}
\email{tmpxv3@mail.missouri.edu}
\begin{document}

\let\thefootnote\relax\footnote{2010 \emph{Mathematics Subject Classification.} 13A35, 13D40, 13F40, 14B05}

\maketitle

\begin{abstract} This paper establishes uniform bounds in characteristic $p$ rings which are either F-finite or essentially of finite type over an excellent local ring. These uniform bounds are then used to show that  the Hilbert-Kunz length functions and the normalized Frobenius splitting numbers defined on the Spectrum of a ring converge uniformly to their limits, namely the Hilbert-Kunz multiplicity function and the F-signature function. From this we establish that the F-signature function is lower semi-continuous. Lower semi-continuity of the F-signature of a pair is also established. We also give a new proof of the upper semi-continuity of Hilbert-Kunz multiplicity, which was originally proven by Ilya Smirnov.

\end{abstract}

\section{Introduction}

Throughout this paper all rings are assumed to be commutative, Noetherian, with identity, and of prime characteristic $p$. We shall reserve $q$ to denote a power of $p$, i.e., $q=p^e$ for some nonnegative integer $e$, and $\lambda(-)$ denotes the standard length function.

\medskip

If $(R,\m)$ is a local ring of dimension $d$, $M$ a finitely generated $R$-module, and $I$ an $\m$-primary ideal of $R$, then \emph{the $q$th Hilbert-Kunz length of $M$ at $I$} is given by $\frac{1}{q^{\dim(M)}}\lambda(M/I^{[q]}M)$. \emph{The Hilbert-Kunz multiplicity of $M$ at $I$} is defined by $$\e(I, M):=\lim_{q\rightarrow \infty}\frac{1}{q^{\dim(M)}}\lambda\left(\frac{M}{I^{[q]}M}\right).$$ Paul Monsky showed this limit always exists in \cite{Monsky1983}.

\medskip

We say that a ring $R$ is F-finite if the Frobenius endomorphism $F: R\rightarrow R$ which maps $r\mapsto r^p$ makes $R$ a finite $R$-module. This is equivalent to $R$ being module finite over $F^{e}(R)$ for all $e\geq 1$, and when $R$ is reduced this is equivalent to $R^{1/q}$ being module finite over $R$ for all $q$. If $R$ is an F-finite ring, then so is any finitely generated algebra and localization over $R$. If $(R,\m,k)$ is local then we let $\alpha(R)=\log_p[k^{1/p}:k]$. If $R$ is not necessarily local and $P\in \Spec(R)$, then we let $\alpha(P)=\alpha(R_P)$. 

\medskip

In this paper we will be interested in uniform properties of Hilbert-Kunz functions over F-finite rings and rings essentially of finite type over an excellent local ring. For the sake of simplicity, assume that $I=\m$ and $M=R$. Then not only does $\frac{1}{q^d}\lambda(R/\m^{[q]})$ converge to $\e(R):=\e(\m,R)$, it is the case that $\lambda(R/\m^{[q]}R)=\e(R)q^d+O(q^{d-1})$, hence there exists a $C>0$ such that for all $q$, $\lambda(R/\m^{[q]}R)\leq Cq^d$. Now suppose that $R$ is a not necessarily local characteristic $p$ ring. Then for each $P\in \Spec(R)$ there exits a constant $C>0$ such that for each $q$, $\lambda(R_P/P^{[q]}R_P)\leq Cq^{\heigh(P)}$. This particular result is not very interesting since the constant $C$ depends upon $P$ and is easily obtained from well known results about Hilbert-Kunz length functions. Recently, I. Smirnov showed, if $R$ is an excellent ring of characteristic $p$, that for each $P\in\Spec(R)$ there exists a constant $C$ and element $s\in R-P$ such that for all $Q\in D(s)\cap V(P)$, $\lambda_{R_Q}(R_Q/Q^{[q]}R_Q)\leq Cq^{\heigh(Q)}$ (see Lemma 14 in \cite{Smirnov2014}). We significantly improve this result in Proposition \ref{Main Lemma} and Proposition \ref{Main Lemma Finite Type} for rings which are either F-finite or are essentially of finite type over an excellent local ring, both of which are large classes of excellent rings. A consequence of Proposition \ref{Main Lemma} and Proposition \ref{Main Lemma Finite Type} is that, if $R$ is F-finite or essentially of finite type over an excellent local ring, then there exists a constant $C$ such that for all $P\in\Spec(R)$, $\lambda(R_P/P^{[q]}R_P)\leq Cq^{\heigh(P)}$. 

\medskip

A \emph{map of primary ideals} is a map $I(-):\Spec(R)\rightarrow \{\mbox{Ideals of }R\}$ such that for each $P\in \Spec(R)$, $I(P)R_P$ is a $PR_P$-primary. If $M$ is a finitely generated $R$-module then we shall denote by $\lambda_{q_1}^M(I(-))$, or simply $\lambda_{q_1}(I(-))$, if $M$ is understood, to be the function from the support of $M$, which we denote $\Supp(M)$, to the real numbers $\R$, which maps a prime $P\mapsto \frac{1}{q_1^{\dim(M_P)}}\lambda(M_P/I(P)^{[q_1]}M_P)$. We denote by $\e(I(-), M_{-})$ the function which sends a prime $P\in\Supp(M)$ to $\e(I(P), M_P)$.

\medskip

Let $R$, $M$, and $I(-)$ be as above. Then it is easy to see that $\lambda_{q_1}(I(-))$ converges pointwise to $\e(I(-), M_{-})$ as $q_1\rightarrow \infty$. Theorem \ref{Main Theorem} states that if $R$ is an F-finite ring or essentially of finite type over an excellent local ring, $M$ is a finitely generated $R$-module, and $I(-)$ a map of primary ideals, then $\lambda_{q_1}^M(I(-))/\lambda^M_1(I(-))$ converges uniformly to $\e(I(-), M_{-})/\lambda^M_1(I(-))$ as $q_1\rightarrow \infty$. In particular, if $M=R$ and $I(P)=P$, then the $q$th Hilbert-Kunz function, which sends a prime $P\mapsto \lambda(R_P/P^{[q]}R_P)$, converges uniformly to the Hilbert-Kunz multiplicity function, which sends a prime $P\mapsto \e(R_P)$. In order to prove this we will need to establish some uniform bounds in F-finite rings and in rings essentially of finite type over an excellent local ring. Some of the uniform bounds established in Section \ref{Uniform Bounds} and Section \ref{Uniform Bounds 2} of this paper are related to, but are often improvements of, uniform bounds in Section 3 of \cite{Smirnov2014}, which establishes the upper semi-continuity of Hilbert-Kunz multiplicity, and Section 3 of \cite{Tucker2012}, which shows that the F-signature of a local ring exists.


\medskip

 I. Smirnov has recently shown in \cite{Smirnov2014} that $\e(R_{-})$ is upper semi-continuous on locally equidimensional rings which are F-finite or essentially of finite type over an excellent local ring. In showing that the Hilbert-Kunz multiplicity function is the uniform limit of upper semi-continuous functions on such rings, we easily recover Smirnov's result. It is still unknown if Hilbert-Kunz multiplicity is upper semi-continuous on an excellent locally equidimensional ring.
 
 \medskip
 
 Another interesting invariant defined on a local ring $(R,\m)$ of characteristic $p$ is the F-signature of $R$, defined originally in \cite{HuLeu2002}, by Huneke and Leuschke. For any module $M$ and any $q=p^e$, we can view $M$ as an $R$-module via restriction of scalars under $F^e$ which we denote by $F^q_* M$. In particular if $R$ is F-finite and $M=R$, then $F^q_*R$ is a module finite $R$-module and we can write $F^q_*R \simeq R^{a_q}\oplus M_q$ where $M_q$ has no free $R$-summand. The number $a_q$ is called the $q$th Frobenius splitting number of $R$. We denote by $b_q=\frac{a_q}{q^{\alpha(R)}}$. The number $s_q:=\frac{a_q}{q^{\alpha(R)+\dim(R)}}$ is called the $q$th normalized Frobenius splitting number of $R$. Yao showed there is a way to measure $b_q$, in way that is well defined even for rings which are not F-finite, hence one can define the $q$th normalized Frobenius splitting number for a ring which is not F-finite (\cite{Yao2006}, Lemma 2.1). Huneke and Leuschke defined the F-signature of a local ring of dimension $d$ to be the limit $\lim_{q\rightarrow} \frac{b_q}{q^d}$, provided the limit exists. Kevin Tucker showed in \cite{Tucker2012} that the F-signature of a local ring always exists.
 
 \medskip
 
 If $R$ is an F-finite ring, which is not necessarily local, then define the $q$th Frobenius splitting number function $a_q:\Spec(R)\rightarrow \R$ by letting $a_q(P)$ be the $q$th Frobenius splitting number of the local ring $R_P$. If $R$ is any characteristic $p$ ring, then define the $q$th normalized Frobenius splitting number function $s_q:\Spec(R)\rightarrow \R$ by letting $s_q(P)$ be the $q$th normalized Frobenius splitting number of the local ring $R_P$. We let $b_q(P)=q^{\heigh(P)}s_q(P)$ and we let $s:\Spec(R)\rightarrow \R$ be the F-signature function which sends a prime $P\mapsto s(R_P)$, the F-signature of the local ring $R_P$. 
 
 \medskip
 
 The problem of whether the F-signature function is a lower semi-continuous function with respect to the Zariski topology has been of interest for quite some time. Recall that a function $f: X\rightarrow \R$, where $X$ is a topological space, is lower semi-continuous at $x\in X$ if for all $\epsilon> 0$, there is an open neighborhood $U$ of $x$ such that $f(x)-f(y)<\epsilon$ for all $y\in U$. In other words, a function $f$ is lower semi-continuous at $x$ if in a small enough open neighborhood of $x$ the numbers $f(y)$ as $y$ varies in the open neighborhood of $x$ can only be slightly smaller than $f(x)$. We would like to briefly explain why it has been suspected that the F-signature function should satisfy this property.
 
  \medskip
 
The F-signature detects subtle information about the severity of the singularity of a local ring. Given a local ring $(R,\m)$, it always the case that $0\leq s(R)\leq 1$. Huneke and Leuschke showed in \cite{HuLeu2002} that $s(R)=1$ if and only if $R$ is a regular local ring. Aberbach and Leuschke showed in \cite{AberbachLeuschke2003} that $s(R)>0$ if and only if $R$ is strongly F-regular. Heuristically, the closer to $1$ the F-signature of $R$ is the "nicer" the singularity is, and the closer to $0$ the "worse" the singularity is. One expects that given a ring or a scheme with decent geometric properties, that the severity of a singularity of a point is controlled in an open neighborhood of that point.  This is exactly what we should expect the F-signature function defined on the spectrum of a "decent" ring, e.g. and excellent domain, to do. Given a prime $P$ in the spectrum of such a ring, we expect that in a small enough open neighborhood of the prime, that the singularities found in that open neighborhood are not too much worse then the singularity associated with $P$. Thus we should expect than in a small enough open neighborhood $U$ of $P$ that $s(Q)$ is at most $\epsilon$ closer to $0$, which is precisely what lower semi-continuity of the F-signature would say.
 
 \medskip
 
  Another reason to expect the F-signature function to be lower semi-continuous is that Enescu and Yao showed in \cite{EnescuYao2011} that under mild conditions, the $q$th normalized Frobenius splitting number function is a lower semicontinuous functions. For example, they showed that if $R$ is a domain which is either F-finite or essentially of finite type over an excellent local ring, then the $q$th normalized Frobenius number function is lower semi-continuous. So after Kevin Tucker showed the F-signature to always exist, it has been known that the F-signature function naturally arrises as the limit of lower semi-continuous functions. 
  
  \medskip
  
  Some light has been previously shed on the lower semi-continuity of the F-signature problem. Blickle, Schwede, and Tucker showed that if $R$ is a regular and not necessarily local F-finite ring with $0\neq f\in R$ and $t\geq 0$, then the function $\Spec(R)\rightarrow \R$ defined by $P\mapsto s(R_P, f^t)$ is lower semi-continuous. See \cite{BST2013} for more details. The results in section \ref{Lower Semi-Continuity of F-signature of Pairs} in this paper recapture Blickle, Schewede, and Tucker's result.

 \medskip
 
 Theorem \ref{F-signature 8} proves that if $R$ is either F-finite or is essentially of finite type over an excellent local ring, then the $q$th normalized Frobenius splitting number functions converge uniformly to the F-signature function as $q\rightarrow \infty$. It will then follow by Enescu's and Yao's work, \cite{EnescuYao2011}, that the F-signature function will be lower semi-continuous on all such rings. Kevin Tucker has independently found, and discussed with the author, an alternative proof of the lower semi-continuity of the F-signature.

 \medskip
 
 The paper is organized as follows. In section \ref{Preliminary} we establish some preliminary results. In particular, section \ref{Preliminary} contains a generalized version of a Lemma of Sankar Dutta which is crucial to the bounds given in section \ref{Uniform Bounds}. Section \ref{Uniform Bounds} establishes uniform bounds of Hilbert-Kunz length functions in F-finite rings. Section \ref{Uniform Bounds} is the most difficult section to work through, but the bounds that are established lead to a proof that the F-signature function is lower semi-continuous on F-finite rings and rings essentially of finite type over an excellent local ring. Section \ref{Uniform Bounds 2} establishes the bounds in \ref{Uniform Bounds} for rings which are essentially of finite type over an excellent local ring. In Section \ref{Applications} we apply the results of section \ref{Uniform Bounds} and \ref{Uniform Bounds 2} to establish the uniform convergence of Hilbert-Kunz length functions and normalized Frobenius splitting numbers to their limits. In Section \ref{Lower Semi-Continuity of F-signature of Pairs} lower semi-continuity of the F-signature of a pair, $(R, \mathscr{D})$, is established for all Cartier subalgebras $\mathscr{D}$ on an F-finite ring $R$.

\section{Preliminary Results}\label{Preliminary}



If $R$ is an F-finite ring which is locally equidimensional, then it was originally shown by E. Kunz in \cite{Kunz1976} that the function $\Spec(R)\rightarrow \R$ which sends $P\mapsto \alpha(P)+\height(P)$ is constant on connected components of $\Spec(R)$. In particular, if $R$ is an F-finite domain then $\alpha(P)+\height(P)$ is constant on $\Spec(R)$. If $R$ is an F-finite domain then we let $\gamma(R)$ be the constant $\alpha(P)+\height(P)$.

\medskip

We will need a global version of a Lemma, first proved by Sankar Dutta, in order to establish the uniform bounds found in Section \ref{Uniform Bounds} and \ref{Uniform Bounds 2}. In \cite{Dutta1983}, Sankar Dutta showed that if $(R,\m)$ is an F-finite local domain of dimension $d$ then there exists a finite set of nonzero primes $\mathcal{S}(R)$ and a constant $C$ such that for all $q=p^e$ there is a containment of $R$-modules $R^{q^{\gamma(R)}}\subseteq R^{1/q}$ which has a prime filtration whose prime factors are isomorphic to $R/P$, where $P\in\mathcal{S}(R)$, and such a prime factor appears no more than $Cq^{\gamma(R)}$ times in the filtration. In particular, the length of the prime filtration of $R^{q^{\gamma(R)}}\subseteq R^{1/q}$ has length no more than $C|\mathcal{S}(R)|q^{\gamma(R)}$. This result, for local domains whose residue field is perfect, is exercise 10.4 in \cite{Huneke1996}, whose proof is given in the second appendix by Karen Smith, and this result is explicitly stated and proved in \cite{Huneke2013} as Lemma 4. 

\begin{Remark}\label{Remark 1}\emph{If $R$ is an F-finite domain and $P\in \Spec(R)$ a nonzero prime, then $\gamma(R/P)=\log_p[\frac{R_P}{PR_P}^{1/p}: \frac{R_P}{PR_P}]=\alpha(P)< \alpha(P)+\height(P)=\gamma(R)$.}

\end{Remark}



\begin{Lemma}\label{Global version of Dutta's Lemma} Let $R$ be an F-finite domain. Then there exists a finite set of nonzero primes $\mathcal{S}(R)$, and a constant $C$, such that for every $q=p^e$, 
\begin{enumerate}

\item there is a containment of $R$-modules $R^{q^{\gamma(R)}}\subseteq R^{1/q}$, 

\item which has a prime filtration whose prime factors are isomorphic to $R/P$, where $P\in\mathcal{S}(R)$,

\item and for each $P\in \mathcal{S}(R)$, the prime factor $R/P$ appears no more than $Cq^{\gamma(R)}$ times in the prime filtration of the containment $R^{q^{\gamma(R)}}\subseteq R^{1/q}$.

\end{enumerate}

\end{Lemma}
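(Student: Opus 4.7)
The strategy is to induct on $\dim R$. The base case $\dim R = 0$ is trivial: $R$ is a field, $R^{1/q}$ is $R$-free of rank $q^{\gamma(R)}$, and one takes $\mathcal{S}(R)=\emptyset$. For the inductive step, assume the lemma is known for all F-finite domains of strictly smaller dimension. Since $R^{1/p}$ is a finitely generated torsion-free $R$-module of generic rank $p^{\gamma(R)}$ (Kunz at the generic point), pick an $R$-linear embedding $\phi\colon R^{p^\gamma}\hookrightarrow R^{1/p}$ with torsion cokernel $C$. Set $\mathcal{A}:=\Ass(C)$; this is a finite set of nonzero primes. For each $Q\in\mathcal{A}$ the quotient $R/Q$ is an F-finite domain with $\dim R/Q<\dim R$, and the inductive hypothesis yields a finite set $\mathcal{S}(R/Q)$ of nonzero primes of $R/Q$ together with a constant $C_Q$. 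Define
\[
\mathcal{S}(R) := \mathcal{A}\cup\bigcup_{Q\in\mathcal{A}}\bigl\{Q'\in\Spec(R) \,:\, Q'\supsetneq Q,\ Q'/Q\in\mathcal{S}(R/Q)\bigr\},
\]
a finite set of nonzero primes of $R$.

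For each $q=p^e$ I construct the embedding $\Phi_e\colon R^{q^\gamma}\hookrightarrow R^{1/q}$ iteratively: compose $\Phi_{e-1}^{\oplus p^\gamma}\colon R^{p^{e\gamma}}\hookrightarrow (R^{1/p^{e-1}})^{p^\gamma}$ with the $R^{1/p^{e-1}}$-linear embedding $(R^{1/p^{e-1}})^{p^\gamma}\hookrightarrow R^{1/p^e}$ that sends standard basis vectors to the $p^{e-1}$-th roots of the $\phi(e_i)$. The cokernel $C_e$ then fits into a short exact sequence
\[
0\longrightarrow C_{e-1}^{\oplus p^\gamma}\longrightarrow C_e\longrightarrow D_e\longrightarrow 0,
\]
where $D_e$ is the cokernel of the outer map. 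The crucial point is that $D_e$, viewed as an $R^{1/p^{e-1}}$-module, is exactly the analogue of $C$ for the F-finite domain $R^{1/p^{e-1}}$ (using $\gamma(R^{1/p^{e-1}})=\gamma(R)$), and its $R^{1/p^{e-1}}$-associated primes are $\{Q^{1/p^{e-1}} : Q\in\mathcal{A}\}$ under the purely inseparable bijection $\Spec(R)\leftrightarrow\Spec(R^{1/p^{e-1}})$. A prime filtration of $D_e$ as $R^{1/p^{e-1}}$-module, restricted to $R$, yields layers isomorphic to $(R/Q)^{1/p^{e-1}}$ for $Q$ ranging over a finite set of primes containing members of $\mathcal{A}$.

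I then refine each layer $(R/Q)^{1/p^{e-1}}$ by the inductive hypothesis applied to $R/Q$: there is an embedding $(R/Q)^{(p^{e-1})^{\gamma(R/Q)}}\hookrightarrow (R/Q)^{1/p^{e-1}}$ whose cokernel admits a prime filtration in $R/Q'$ for $Q'/Q\in\mathcal{S}(R/Q)$, each factor appearing at most $C_Q(p^{e-1})^{\gamma(R/Q)}$ times. Concatenating with the free part gives $(R/Q)^{1/p^{e-1}}$ a prime filtration with all factors in $\{R/P : P\in\mathcal{S}(R)\}$. By Remark \ref{Remark 1}, $\gamma(R/Q)\leq\gamma(R)-1$ for every $Q\in\mathcal{A}$, so the multiplicity of any $R/P$ contributed by $D_e$ is at most a constant multiple of $p^{(e-1)(\gamma-1)}$. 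Feeding this into the recursion $m_e(P)\leq p^\gamma m_{e-1}(P)+O\bigl(p^{(e-1)(\gamma-1)}\bigr)$ coming from the short exact sequence and summing the resulting geometric series yields $m_e(P)\leq Cq^\gamma$ for a constant $C$ depending only on $R$.

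The main obstacle is the identification of $D_e$ as the $R^{1/p^{e-1}}$-analogue of $C$ with its associated primes correctly tracked, so that restriction of scalars along the finite purely inseparable extension $R\hookrightarrow R^{1/p^{e-1}}$ translates an $R^{1/p^{e-1}}$-module filtration of $D_e$ into an $R$-module filtration with layers of the form $(R/Q)^{1/p^{e-1}}$ to which the inductive hypothesis for $R/Q$ can be applied. Once this identification is established the rest is bookkeeping, and the strict inequality $\gamma(R/Q)\leq\gamma(R)-1$ from Remark \ref{Remark 1} is precisely what makes the multiplicity recursion converge geometrically to the stated bound.
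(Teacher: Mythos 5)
Your approach is essentially the paper's: induct on $\dim R$, embed $R^{p^{\gamma}}$ into $R^{1/p}$ with torsion cokernel, build $\Phi_e$ by composing $p^{\gamma}$ copies of $\Phi_{e-1}$ with the $(e-1)$-st root of the base embedding, filter the cokernel $C_e$ via the short exact sequence $0\to C_{e-1}^{\oplus p^{\gamma}}\to C_e\to D_e\to 0$, and use $\gamma(R/Q)\le\gamma(R)-1$ to make the recursion converge geometrically. The paper carries the running total $C'q^{\gamma}(1+\frac1p+\cdots+\frac1q)$ explicitly; your recurrence $m_e(P)\le p^{\gamma}m_{e-1}(P)+O(p^{(e-1)(\gamma-1)})$ encodes the same estimate.

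There is, however, a gap in the bookkeeping of primes. You set $\mathcal{A}=\Ass(C)$ and build $\mathcal{S}(R)$ from $\mathcal{A}$ alone, but the primes appearing as factors in a prime filtration of $C$ (equivalently of $D_e$) need only \emph{contain} $\Ass(C)$; in general they can be strictly larger (they are only constrained to lie in $\Supp(C)$). You acknowledge this yourself when you write that the layers of $D_e$ are $(R/Q)^{1/p^{e-1}}$ ``for $Q$ ranging over a finite set of primes containing members of $\mathcal{A}$,'' but for any such $Q\notin\mathcal{A}$ you have not reserved a $\mathcal{S}(R/Q)$ or a $C_Q$, so the refinement of that layer produces prime factors outside your announced $\mathcal{S}(R)$. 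The fix is exactly what the paper does: fix \emph{one} prime filtration of the single containment $R^{p^{\gamma}}\subseteq R^{1/p}$, let $\mathcal{A}=\{P_1,\dots,P_h\}$ be the multiset of primes that appear in it, and for each $e$ take the filtration of $D_e$ to be the $p^{e-1}$-th root of that fixed filtration (so the $Q$'s are the $P_i$'s, the same for all $e$). With this correction the rest of your argument goes through unchanged and matches the paper's.
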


\begin{proof} We shall prove the statement by induction on the Krull dimension of $R$. If the dimension $R$ is $0$, then $R$ is a field and the Lemma is trivial.

\medskip

Now suppose that $\dim(R)>0$. Then $R^{1/p}$ is a torsion-free $R$-module of rank $p^{\gamma(R)}$. Hence there is an injection of $R$-modules $R^{p^{\gamma(R)}}\subseteq R^{1/p}$ so that the support of the cokernel $R^{1/p}/R^{p^{\gamma(R)}}$ consists of nonzero primes. Therefore $R^{p^{\gamma(R)}}\subseteq R^{1/p}$ has a prime filtration of the following form with the quotients $M_i/M_{i-1}= R/P_i$ where $P_i$ is a nonminimal prime of $R$, $$R^{p^{\gamma(R)}}=M_0\subseteq M_1\subseteq \cdots \subseteq M_h=R^{1/p}.$$ The quotients $R/P_i$ are F-finite domains of smaller Krull dimension than $R$, and so we may assume by induction that the result holds for each $R/P_i$, with finite collection of primes $\mathcal{S}(R/P_i)$ and constant $C_i$. Let $C'=\sum C_i$ and $\mathcal{S}(R)=\bigcup (\mathcal{S}(R/P_i)\cup \{P_i\})$. Observe that the above filtration shows that $R^{p^{\gamma(R)}}\subseteq R^{1/p}$ has a prime filtration consisting of no more than $C'$ quotients isomorphic to $R/P$ for each $P\in \mathcal{S}(R)$ and all prime factors are isomorphic to $R/P$ for some $P\in\mathcal{S}(R)$. We shall show by induction that $R^{q^{\gamma(R)}}\subseteq R^{1/q}$ has a prime filtration whose prime factors are isomorphic to $R/P$ with $P\in\mathcal{S}(R)$ with no more than $C'q^{\gamma(R)}(1+\frac{1}{p}+\frac{1}{p^2}+\cdots+\frac{1}{q})$ quotients isomorphic to $R/P$ for each $P\in\mathcal{S}(R)$.

\medskip

Now suppose that $R^{q^{\gamma(R)}}=N_0\subseteq N_1\subseteq \cdots \subseteq N_m= R^{1/q}$ is a prime filtration of $R^{q^{\gamma(R)}}\subseteq R^{1/q}$ whose prime factors are isomorphic to $R/P$ with $P\in\mathcal{S}(R)$ with no more than $C'q^{\gamma(R)}(1+\frac{1}{p}+\cdots +\frac{1}{q})$ quotients isomorphic to $R/P$ for each $P\in \mathcal{S}(R)$. Take $q$th roots of the modules in the filtration $R^{p^{\gamma(R)}}\subseteq R^{1/p}$ to get the following new filtration, $$(R^{1/q})^{p^{\gamma(R)}}=M_0^{1/q}\subseteq M_1^{1/q}\subseteq \cdots \subseteq M_h^{1/q}=R^{1/pq}.$$ Each of the quotients $M_i^{1/q}/M_{i+1}^{1/q}=(R/P_i)^{1/q}$. By induction there exists a prime filtration of $M_{i-1}^{1/q}\subseteq M_i^{1/q}$ with precisely $q^{\gamma(R)}$ prime factors isomorphic to $R/P_i$ and each other prime factor is isomorphic to $R/P$ for some $P\in \mathcal{S}(R/P_i)$ and such a prime factor appears no more than $C_iq^{\gamma(R/P_i)}$ times in the filtration. Furthermore, the prime filtration $R^{q^{\gamma(R)}}=N_0\subseteq N_1\subseteq \cdots \subseteq N_m= R^{1/q}$ gives the following filtration of $(R^{q^{\gamma(R)}})^{p^{\gamma(R)}}=R^{(pq)^{\gamma(R)}}\subseteq (R^{1/q})^{p^{\gamma(R)}}$, $$R^{(pq)^{\gamma(R)}}=N_0^{p^{\gamma(R)}}\subseteq N_1^{p^{\gamma(R)}}\subseteq \cdots \subseteq N_m^{p^{\gamma(R)}}=(R^{1/q})^{p^{\gamma(R)}}.$$ Hence $R^{(pq)^{\gamma(R)}}\subseteq (R^{1/q})^{p^{\gamma(R)}}$ has a prime filtration with prime factors isomorphic to $R/P$ with $P\in\mathcal{S}(R)$ and such a prime factor appears no more than $C'(pq)^{\gamma(R)}(1+\frac{1}{p}+\cdots +\frac{1}{q})$ times in the filtration.

\medskip

 Putting the above information together we get that there is an embedding of $R^{(pq)^{\gamma(R)}}\subseteq R^{1/pq}$ with prime filtration whose prime factors are isomorphic to $R/P$ with $P\in\mathcal{S}(R)$ and there are no more than the following number of quotients isomorphic to $R/P$ for each $P\in \mathcal{S}(R)$:
\begin{eqnarray*}
C'(pq)^{\gamma(R)}\left(1+\frac{1}{p}+\cdots +\frac{1}{q}\right)&+&\sum_{i=1}^hC_iq^{\gamma(R/P_i)}.
\end{eqnarray*}
By Remark \ref{Remark 1} we know that each $\gamma(R/P_i)\leq \gamma(R)-1$, and so we have the following estimates,
\begin{eqnarray*}
C'(pq)^{\gamma(R)}\left(1+\frac{1}{p}+\cdots +\frac{1}{q}\right)&+&\sum_{i=1}^hC_iq^{\gamma(R/P_i)}\\\\
&\leq&C'(pq)^{\gamma(R)}\left(1+\frac{1}{p}+\cdots +\frac{1}{q}\right)+\sum_{i=1}^hC_iq^{\gamma(R)-1}\\\\
&=&C'(pq)^{\gamma(R)}\left(1+\frac{1}{p}+\cdots +\frac{1}{q}\right)+C'q^{\gamma(R)-1}\\\\
&=&C'(pq)^{\gamma(R)}\left(1+\frac{1}{p}+\cdots \frac{1}{q}+\frac{1}{p^{\gamma(R)}q}\right)\\\\
&\leq&C'(pq)^{\gamma(R)}\left(1+\frac{1}{p}+\cdots +\frac{1}{q}+\frac{1}{pq}\right).
\end{eqnarray*}

Each of the sums $1+\frac{1}{p}+\cdots +\frac{1}{q}\leq 1+\frac{1}{2}+\cdots + \frac{1}{2^e}\leq 2$. It now follows by induction that for every $q$, that the containment $R^{q^{\gamma(R)}}\subseteq R^{1/q}$ will have a prime filtration whose factors are isomorphic to $R/P$ for some $P\in\mathcal{S}(R)$ with no more than $C'(1+\frac{1}{p}+\cdots +\frac{1}{q})q^{\gamma(R)}\leq 2C'q^{\gamma(R)}$ quotients isomorphic to $R/P$ for each $P\in\mathcal{S}(R)$.

\end{proof}

We will find it useful in Section \ref{Uniform Bounds} to have a version of Dutta's Lemma with the inclusion of $R^{q^{\gamma(R)}}\subseteq R^{1/q}$ reversed.

\begin{Corollary}\label{Global version of Dutta's Lemma part 2} Let $R$ be an F-finite domain. Then there exists a finite set of nonzero primes $\mathcal{S}(R)$, and a constant $C$, such that for every $q=p^e$, 
\begin{enumerate}

\item there is a containment of $R$-modules $R^{1/q}\subseteq R^{q^{\gamma(R)}}$, 

\item which has a prime filtration whose prime factors are isomorphic to $R/P$, where $P\in\mathcal{S}(R)$,

\item and for each $P\in \mathcal{S}(R)$, the prime factor $R/P$ appears no more than $Cq^{\gamma(R)}$ times in the prime filtration of the containment $R^{1/q}\subseteq R^{q^{\gamma(R)}}$.

\end{enumerate}
\end{Corollary}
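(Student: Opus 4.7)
The plan is to mirror the proof of Lemma \ref{Global version of Dutta's Lemma}, reversing the direction of every key containment. I induct on $\dim R$, the case $\dim R = 0$ being trivial. For $\dim R > 0$, the base step must produce a reversed inclusion $R^{1/p} \hookrightarrow R^{p^{\gamma(R)}}$: since $R^{1/p}$ is a finitely generated torsion-free $R$-module of rank $p^{\gamma(R)}$, I pick a basis $y_1, \ldots, y_{p^{\gamma(R)}}$ of $\mathrm{Frac}(R) \otimes_R R^{1/p}$ drawn from $R^{1/p}$ and a nonzero $c \in R$ clearing denominators on a finite generating set of $R^{1/p}$. Then $cR^{1/p} \subseteq M := Ry_1 \oplus \cdots \oplus Ry_{p^{\gamma(R)}} \cong R^{p^{\gamma(R)}}$ inside $R^{1/p}$, and multiplication by $c$ gives an injection $R^{1/p} \hookrightarrow M \cong R^{p^{\gamma(R)}}$. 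The cokernel $M/cR^{1/p}$ is generically zero, hence torsion; I refine it to a prime filtration with quotients $R/P_i$ for nonminimal primes $P_i$ and invoke the inductive hypothesis on $\dim R$ applied to each $R/P_i$ to obtain a finite set $\mathcal{S}(R/P_i)$ and constant $C_i$. Then I set $\mathcal{S}(R) := \bigcup_i(\mathcal{S}(R/P_i) \cup \{P_i\})$ and $C' := \sum_i C_i$.

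Next I induct on $q$, assuming $R^{1/q} \hookrightarrow R^{q^{\gamma(R)}}$ has a prime filtration in which each factor $R/P$ with $P \in \mathcal{S}(R)$ appears at most $C'q^{\gamma(R)}(1+1/p+\cdots+1/q)$ times. Taking $q$-th roots of the refined base filtration $cR^{1/p} \subseteq \cdots \subseteq M \subseteq R^{1/p}$ inside $R^{1/pq}$ produces
\[
c^{1/q}R^{1/pq} \subseteq \cdots \subseteq M^{1/q} \subseteq R^{1/pq},
\]
whose successive quotients are $(R/P_i)^{1/q}$ (and, by induction on $\dim R$ applied to each $R/P_i$, admit prime filtrations of the desired form with each prime in $\mathcal{S}(R/P_i)$ appearing at most $C_i q^{\gamma(R/P_i)}$ times). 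Multiplication by $c^{1/q}$ identifies $c^{1/q}R^{1/pq}$ with $R^{1/pq}$ as $R$-modules, and the Frobenius identity $(\sum r_i y_i)^{1/q} = \sum r_i^{1/q}y_i^{1/q}$ identifies $M^{1/q}$ with the free $R^{1/q}$-module $(R^{1/q})^{p^{\gamma(R)}}$. Applying the inductive hypothesis on $q$ to each of the $p^{\gamma(R)}$ copies of $R^{1/q}$ yields $(R^{1/q})^{p^{\gamma(R)}} \hookrightarrow (R^{q^{\gamma(R)}})^{p^{\gamma(R)}} = R^{(pq)^{\gamma(R)}}$ with each prime in $\mathcal{S}(R)$ contributing at most $C'(pq)^{\gamma(R)}(1+1/p+\cdots+1/q)$ factors. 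Composing the two embeddings gives $R^{1/pq} \hookrightarrow R^{(pq)^{\gamma(R)}}$.

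Combining the counts from the two stacks of filtrations gives exactly the sum
\[
C'(pq)^{\gamma(R)}\!\left(1+\tfrac{1}{p}+\cdots+\tfrac{1}{q}\right) + \sum_i C_i q^{\gamma(R/P_i)},
\]
which, using $\gamma(R/P_i) \leq \gamma(R)-1$ from Remark \ref{Remark 1}, is bounded by $2C'(pq)^{\gamma(R)}$ by the same harmonic-series telescoping as in Lemma \ref{Global version of Dutta's Lemma}. This closes the induction on $q$. The main subtlety is the base step: I need the inclusion to live inside the overring $R^{1/p}$ (rather than be an abstract embedding into a free module) so that taking $q$-th roots acts on genuine submodules of $R^{1/pq}$; the clearing-denominators construction via $c$ is exactly what supplies this, after which the bookkeeping parallels that of Lemma \ref{Global version of Dutta's Lemma} line for line.
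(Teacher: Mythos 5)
There is a genuine gap in the inductive step, and it stems from an asymmetry that your ``mirroring'' plan does not respect. When you take $q$-th roots of the base filtration, you obtain successive quotients isomorphic to $(R/P_i)^{1/q}$, and you need to \emph{refine each of these to a prime filtration from zero}. You claim to do this by applying the inductive hypothesis on $\dim R$ to the F-finite domain $R/P_i$. But your inductive hypothesis is the present Corollary for $R/P_i$: it supplies a containment $(R/P_i)^{1/q} \subseteq (R/P_i)^{q^{\gamma(R/P_i)}}$ together with a prime filtration of the quotient $(R/P_i)^{q^{\gamma(R/P_i)}}/(R/P_i)^{1/q}$. That is a filtration of the module sitting \emph{above} $(R/P_i)^{1/q}$, not of $(R/P_i)^{1/q}$ itself, and there is no way to ``subtract'' it from a prime filtration of the free module $(R/P_i)^{q^{\gamma(R/P_i)}}$ to recover one for the submodule: prime filtrations do not behave subtractively across a short exact sequence $0 \to A \to B \to C \to 0$ in the direction $B, C \leadsto A$.

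What you actually need at this point is Lemma \ref{Global version of Dutta's Lemma} applied to $R/P_i$, i.e. the \emph{forward} containment $(R/P_i)^{q^{\gamma(R/P_i)}} \subseteq (R/P_i)^{1/q}$. Filtering the free bottom $(R/P_i)^{q^{\gamma(R/P_i)}}$ contributes $q^{\gamma(R/P_i)}$ copies of $R/P_i$, and the cokernel is controlled by $\mathcal{S}(R/P_i)$ and $C_i$; concatenating these produces a bona fide prime filtration of $(R/P_i)^{1/q}$ from zero with the required bound, since $q^{\gamma(R/P_i)} \le q^{\gamma(R)-1}$ by Remark \ref{Remark 1}. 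This is exactly what the paper's proof does: it proves the Corollary by induction on $q$ alone, with no induction on dimension, invoking Lemma \ref{Global version of Dutta's Lemma} as a previously-established black box for each $R/P_i$. Notice that the same direction (the Lemma, not the Corollary) is needed to filter the quotients $(R/P_i)^{1/q}$ \emph{regardless} of which direction the top-level containment points; this is precisely why the proof is not a pure mirror image.

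Separately, the ``main subtlety'' you flag --- realizing the embedding $R^{1/p} \hookrightarrow R^{p^{\gamma(R)}}$ as an honest inclusion inside the overring $R^{1/p}$ via clearing denominators --- is a correct construction but is not actually required. Taking $q$-th roots of a module map is just restriction of scalars along the iterated Frobenius, which is an exact functor applying to any injective $R$-linear map, so the paper's abstract embedding and its filtration push forward without further ado. Your extra work here is harmless, but the real subtlety you needed to address is the one in the previous paragraph.
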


\begin{proof} Since $R^{1/p}$ is torsion-free of rank $p^{\gamma(R)}$, there is an injection of $R$-modules $R^{1/p}\subseteq R^{p^{\gamma(R)}}$ so that the support of the cokernel $R^{p^{\gamma(R)}}/R^{1/p}$ consists of nonzero primes.  Therefore there is prime filtration $R^{1/p}=M_0\subseteq M_1\subseteq \cdots \subseteq M_h=R^{p^{\gamma(R)}}$ with $M_i/M_{i-1}\simeq R/P_i$ with $P_i$ a nonzero prime ideal. By Remark \ref{Remark 1} $\gamma(R/P_i)<\gamma(R)$. We let $\mathcal{S}(R/P_i)$ and constant $C_i$ be the collection of primes and constant as described in Lemma \ref{Global version of Dutta's Lemma} for the F-finite domain $R/P_i$. As in the proof of Lemma \ref{Global version of Dutta's Lemma} we let $C'=\sum C_i$ and $\mathcal{S}(M)=\bigcup (\mathcal{S}(R/P_i)\cup \{P_i\})$. Furthermore, once again as in Lemma \ref{Global version of Dutta's Lemma}, we can show by induction that $R^{1/q}\subseteq R^{q^{\gamma(R)}}$ has a prime filtration whose prime factors are isomorphic to $R/P$ with $P\in\mathcal{S}(M)$ with no more than $C'q^{\gamma(R)}(1+\frac{1}{p}+\frac{1}{p^2}+\cdots+\frac{1}{q})$ quotients isomorphic to $R/P$ for each $P\in\mathcal{S}(R)$. The above filtration of $R^{1/p}\subseteq R^{p^{\gamma(R)}}$ shows the induction step when $q=p$. 

\medskip

Now suppose that $R^{1/q}=N_0\subseteq N_1\subseteq \cdots \subseteq N_m=R^{q^{\gamma(R)}}$ is a prime filtration of $R^{1/q}\subseteq R^{q^{\gamma(R)}}$ such that each $N_j/N_{i-j}\simeq R/P_j$ for some $P_j\in \mathcal{S}(R)$ and such a prime factor appears no more than $C'q^{\gamma(R)}(1+\frac{1}{p}+\frac{1}{p^2}+\cdots+\frac{1}{q})$ times in the filtration. Therefore $(R^{p^{\gamma(R)}})^{1/q}=(R^{1/q})^{p^{\gamma(R)}}\subseteq (R^{q^{\gamma(R)}})^{p^{\gamma(R)}}=R^{(qp)^{\gamma(R)}}$ has a prime filtration with prime factors $R/P_{j}$ with $P_j\in \mathcal{S}(R)$ and such a prime factor appears no mare than $C'(qp)^{\gamma(R)}(1+\frac{1}{p}+\frac{1}{p^2}+\cdots+\frac{1}{q})$ times in the filtration. Furthermore, the prime filtration $R^{1/p}=M_0\subseteq M_1\subseteq\cdots \subseteq M_n= R^{p^{\gamma(R)}}$ gives the following filtration of $R^{1/pq}=(R^{1/p})^{1/q}\subseteq (R^{p^{\gamma(R)}})^{1/q}$, $$(R^{1/p})^{1/q}=M_0^{1/q}\subseteq M_{1}^{1/q}\subseteq \cdots \subseteq M_n^{1/q}=(R^{p^{\gamma(R)}})^{1/q}.$$ Since $M_i^{1/q}/M_{i-1}^{1/q}\simeq (R/P_i)^{1/q}$, we apply Lemma \ref{Global version of Dutta's Lemma} to know there is a prime filtration of each $M^{1/q}_{i-1}\subseteq M^{1/q}_i$ whose prime factors come from $\mathcal{S}(R/P_i)$ and such a prime factor appears no more than $C_iq^{\gamma(R/P_i)}\leq C'q^{\gamma(R)-1}$ times in the filtration. Putting all of this information together we get an embedding $R^{1/pq}\subseteq R^{(pq)^{\gamma(R)}}$ with a prime filtration whose prime factors come from $\mathcal{S}(R)$ and such a prime factor appears no more than the following number in the filtration,  $$p^{\gamma(R)}C'q^{\gamma(R)}\left(1+\frac{1}{p}+\cdots +\frac{1}{q}\right)+\sum_{i=1}^hC_iq^{\gamma(R/P_i)}\leq C'(qp)^{\gamma(R)}(1+\frac{1}{p}+\cdots +\frac{1}{pq})\leq 2C'(qp)^{\gamma(R)}.$$

\end{proof}


We combine of Lemma \ref{Global version of Dutta's Lemma} and Corollary \ref{Global version of Dutta's Lemma part 2} into a single statement for convenience.

\begin{Corollary}\label{Global version of Dutta's Lemma part 3} Let $R$ be an F-finite domain. There exists a finite set of nonzero primes $\mathcal{S}(R)$ and a constant $C$ such that for every $q=p^e$, there is a containment of $R$-modules $R^{1/q}\subseteq R^{q^{\gamma(R)}}$ and $R^{q^{\gamma(R)}}\subseteq R^{1/q}$ which each has a prime filtration whose prime factors are isomorphic to $R/P$, where $P\in \mathcal{S}(R)$, and such a prime factor appears no more than $Cq^{\gamma(R)}$ times in the filtration. 
\end{Corollary}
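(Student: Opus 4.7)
The plan is simply to merge the two previous statements. Lemma \ref{Global version of Dutta's Lemma} supplies a finite set of nonzero primes $\mathcal{S}_1(R)$ and a constant $C_1$ that work for the containment $R^{q^{\gamma(R)}}\subseteq R^{1/q}$ for every $q=p^e$, while Corollary \ref{Global version of Dutta's Lemma part 2} supplies a finite set $\mathcal{S}_2(R)$ and constant $C_2$ that work for the containment $R^{1/q}\subseteq R^{q^{\gamma(R)}}$ for every $q=p^e$.

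First I would set $\mathcal{S}(R)=\mathcal{S}_1(R)\cup \mathcal{S}_2(R)$, which is still a finite set of nonzero primes of $R$. Next I would set $C=\max\{C_1,C_2\}$. Then for each $q=p^e$, the prime filtration produced by Lemma \ref{Global version of Dutta's Lemma} for $R^{q^{\gamma(R)}}\subseteq R^{1/q}$ already has all of its prime factors isomorphic to $R/P$ with $P\in \mathcal{S}_1(R)\subseteq \mathcal{S}(R)$, and each $P\in \mathcal{S}_1(R)$ appears at most $C_1 q^{\gamma(R)}\leq Cq^{\gamma(R)}$ times; the primes in $\mathcal{S}(R)\setminus \mathcal{S}_1(R)$ simply appear zero times, which is $\leq Cq^{\gamma(R)}$ trivially. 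The same reasoning using Corollary \ref{Global version of Dutta's Lemma part 2} handles the filtration of $R^{1/q}\subseteq R^{q^{\gamma(R)}}$.

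There is no genuine obstacle here, since both halves of the conclusion are simultaneously achievable by enlarging the set of primes and the constant. The only thing to be careful about is verifying that $\mathcal{S}(R)$ is still finite and that the bound $Cq^{\gamma(R)}$ is uniform in $q$, both of which are immediate from the finiteness of $\mathcal{S}_1(R)\cup \mathcal{S}_2(R)$ and from taking the maximum of the two constants.
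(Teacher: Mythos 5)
Your proposal is correct and matches the paper's (implicit) approach exactly: the paper simply states Corollary \ref{Global version of Dutta's Lemma part 3} as the combination of Lemma \ref{Global version of Dutta's Lemma} and Corollary \ref{Global version of Dutta's Lemma part 2} "for convenience" without writing out a proof, and the routine merge you describe — take $\mathcal{S}(R)=\mathcal{S}_1(R)\cup\mathcal{S}_2(R)$ and $C=\max\{C_1,C_2\}$ — is precisely the intended argument.
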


We shall need the following two well know lemmas, whose proofs are given for the sake of completion.

\begin{Lemma}\label{Length Lemma} Let $(R,\m,k)$ be an F-finite reduced local ring and let $I$ be an $\m$-primary ideal. Then $\lambda(R^{1/q}/IR^{1/q})=q^{\alpha(R)}\lambda(R/I^{[q]}R)$.

\end{Lemma}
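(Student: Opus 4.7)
The plan is to reduce the computation to the standard length multiplicativity under Frobenius pushforward. Since $R$ is reduced and F-finite, the map $r \mapsto r^{1/q}$ gives an identification of $R$-modules $R^{1/q} \cong F^q_* R$, where the $R$-action on the right is via $r \cdot x = r^q x$. Under this identification, the $R$-submodule $I R^{1/q} \subseteq R^{1/q}$ corresponds to the $R$-submodule $I^{[q]} \subseteq F^q_* R$, because for $a \in I$ and $r \in R$ we have $a \cdot r^{1/q} = (a^q r)^{1/q}$. Hence the proof reduces to the $R$-module isomorphism
\[
R^{1/q}/IR^{1/q} \;\cong\; F^q_*\!\bigl(R/I^{[q]}\bigr).
\]

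Next I would establish the general fact that for any $R$-module $N$ of finite length,
\[
\lambda_R(F^q_* N) \;=\; q^{\alpha(R)}\,\lambda_R(N).
\]
This follows by induction on $\lambda_R(N)$ using the short exact sequences of a composition series; since $F^q_*$ is exact (indeed it is just a restriction of scalars along the Frobenius), length is additive along it. Thus the statement reduces to the base case $N = k = R/\m$, for which $F^q_* k$ is precisely $k^{1/q}$ as an $R$-module, with $R$ acting through $k$. Its length over $R$ equals its dimension over $k$, namely $[k^{1/q}:k] = q^{\alpha(R)}$ by definition of $\alpha(R)$.

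Finally, applying this multiplicativity to $N = R/I^{[q]}$, which is indeed of finite length because $I$ (and hence $I^{[q]}$) is $\m$-primary, gives
\[
\lambda\!\bigl(R^{1/q}/IR^{1/q}\bigr) \;=\; \lambda_R\!\bigl(F^q_*(R/I^{[q]})\bigr) \;=\; q^{\alpha(R)}\,\lambda\!\bigl(R/I^{[q]}\bigr),
\]
as desired. There is no real obstacle here: the only subtle point is the bookkeeping identifying $IR^{1/q}$ with $I^{[q]}$ under the switch between the $R^{1/q}$ and $F^q_*R$ viewpoints, which is why the reduced hypothesis is included (so that $R^{1/q}$ is literally a ring extension of $R$ and both pictures coexist without ambiguity).
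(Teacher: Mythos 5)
Your proof is correct and follows essentially the same route as the paper: both arguments hinge on running a composition series of $R/I^{[q]}$ through $F^q_*$ (the paper phrases this as ``taking $q$th roots'' of a prime filtration of $I^{[q]}\subseteq R$), reducing to the base case that $F^q_* k = k^{1/q}$ has $R$-length $[k^{1/q}:k]=q^{\alpha(R)}$. You have merely abstracted the additivity of length under the exact functor $F^q_*$ into a standalone induction, which the paper leaves implicit in the filtration bookkeeping.
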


\begin{proof} Consider a prime filtration of $I^{[q]}R\subseteq R$, say it is given by $I^{[q]}R=M_0\subseteq M_1\subseteq \cdots \subseteq M_n=R$ with each $M_i/M_{i-1}\simeq k$. Then by taking $q$th roots we get a filtration $IR^{1/q}=M_0^{1/q}\subseteq M_1^{1/q}\subseteq \cdots \subseteq M_n^{1/q}=R^{1/q}$ with each quotient $M_i^{1/q}/M_{i-1}^{1/q}\simeq k^{1/q}$. It follows that $\lambda(R^{1/q}/IR^{1/q})=q^{\alpha(R)}\lambda(R/I^{[q]}R)$.

\end{proof}

\begin{Lemma}\label{Multiplicity Lemma} Let $(R,\m, k)$ be a local characteristic $p$ ring, $I$ be an $\m$-primary ideal, and $M$ a finitely generated $R$-module. Then $$\lim_{q_2\rightarrow \infty}\frac{1}{q_2^{\dim(M)}}\lambda(M/I^{[q_1q_2]}M)=q_1^{\dim(M)}\e(I,M).$$

\end{Lemma}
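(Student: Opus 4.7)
The plan is essentially algebraic bookkeeping on top of Monsky's existence theorem for Hilbert-Kunz multiplicity. The key observation is that $I^{[q_1 q_2]} = (I^{[q_1]})^{[q_2]}$, but more importantly, if we write $q = q_1 q_2$, then $q$ runs through powers of $p$ as $q_2$ does (with $q_1$ held fixed). So I would simply reparametrize by $q$.

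Concretely, I would multiply and divide by $q_1^{\dim(M)}$:
\begin{equation*}
\frac{1}{q_2^{\dim(M)}}\lambda\!\left(\frac{M}{I^{[q_1q_2]}M}\right) \;=\; q_1^{\dim(M)}\cdot \frac{1}{(q_1 q_2)^{\dim(M)}}\lambda\!\left(\frac{M}{I^{[q_1q_2]}M}\right).
\end{equation*}
Setting $q = q_1 q_2$, as $q_2$ ranges over powers of $p$ tending to infinity (with $q_1$ fixed), $q$ also ranges over a cofinal subset of powers of $p$ tending to infinity. By Monsky's theorem \cite{Monsky1983}, the limit of $\frac{1}{q^{\dim(M)}}\lambda(M/I^{[q]}M)$ as $q$ runs through powers of $p$ exists and equals $\e(I,M)$. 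Restricting to any cofinal subsequence of powers of $p$ gives the same limit, so taking $q_2 \to \infty$ yields
\begin{equation*}
\lim_{q_2\to \infty}\frac{1}{q_2^{\dim(M)}}\lambda\!\left(\frac{M}{I^{[q_1q_2]}M}\right) = q_1^{\dim(M)}\cdot\lim_{q\to \infty}\frac{1}{q^{\dim(M)}}\lambda\!\left(\frac{M}{I^{[q]}M}\right) = q_1^{\dim(M)}\e(I,M),
\end{equation*}
which is the desired identity.

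There is no substantive obstacle here: the only thing to notice is that fixing $q_1$ and letting $q_2$ range over $\{p^e\}$ is cofinal inside $\{p^e\}$, so no new convergence argument beyond Monsky's theorem is required. In particular we do not need $R$ to be F-finite or reduced for this lemma, which is consistent with the generality in which it is stated.
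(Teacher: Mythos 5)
Your proof is correct and coincides with the paper's argument: multiply and divide by $q_1^{\dim(M)}$, reparametrize via $q=q_1q_2$, and invoke Monsky's theorem. The explicit cofinality remark is simply making precise what the paper's second equality uses implicitly.
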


\begin{proof} We only have to observe that
\begin{eqnarray*}
\lim_{q_2\rightarrow \infty}\frac{1}{q_2^{\dim(M)}}\lambda(M/I^{[q_1q_2]}M)&=&\lim_{q_2\rightarrow \infty}\frac{q_1^{\dim(M)}}{(q_1q_2)^{\dim(M)}}\lambda(M/I^{[q_1q_2]}M)\\ \\
&=&q_1^{\dim(M)}\lim_{q\rightarrow \infty}\frac{1}{q^{\dim(M)}}\lambda(M/I^{[q]}M)\\ \\
&=&q_1^{\dim(M)}\e(I,M).
\end{eqnarray*}

\end{proof}

\section{Uniform Bounds in F-Finite Rings}\label{Uniform Bounds}

The goal of this section is to establish uniform bounds in not necessarily local F-finite rings. The purpose of establishing these uniform bounds is to better understand the global behavior of relative Hilbert-Kunz length functions, which can then be used to establish the lower semi-continuity of the F-signature. 

\begin{Remark}\label{Remark} \emph{If $(R,\m)$ is a local ring, an \emph{$\m$-primary pair of ideals} will  be a containment of ideals of $R$, $I\subseteq J $, such that $I$ is $\m$-primary. Observe that either $J$ is also $\m$-primary or is $R$ itself. If $I\subseteq J$ is an $\m$-primary pair of ideals, then there is an ascending chain of ideals $I\subseteq (I, u_1)\subseteq (I,u_1,u_2)\subseteq \cdots \subseteq (I,u_1,u_2,...,u_{\lambda(J/I)})=J$ where $u_{i+1}\in (I,u_1,...,u_i):\m$. We shall let $I_0=I$ and $I_i=(I,u_1,...,u_i)$ for $1\leq i\leq \lambda(J/I).$}
\end{Remark}

\begin{Lemma}\label{Inequalities} Let $(R,\m)$ be a local ring of characteristic $p$ and $M$ a finitely generated $R$-module. If $I\subseteq J$ is an $\m$-primary pair of ideals, then $\lambda(J^{[q]}M/I^{[q]}M)\leq \lambda(M/\m^{[q]}M)\lambda(J/I).$

\end{Lemma}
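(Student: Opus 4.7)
The plan is to use the ascending chain $I = I_0 \subseteq I_1 \subseteq \cdots \subseteq I_{\lambda(J/I)} = J$ from Remark~\ref{Remark} to filter $J^{[q]}M/I^{[q]}M$ into $\lambda(J/I)$ pieces, and then bound each piece individually by $\lambda(M/\m^{[q]}M)$. Bracketing preserves the inclusions, so we get
$$I^{[q]}M = I_0^{[q]}M \subseteq I_1^{[q]}M \subseteq \cdots \subseteq I_{\lambda(J/I)}^{[q]}M = J^{[q]}M,$$
and by additivity of length,
$$\lambda\!\left(J^{[q]}M/I^{[q]}M\right) = \sum_{i=0}^{\lambda(J/I)-1} \lambda\!\left(I_{i+1}^{[q]}M \,/\, I_i^{[q]}M\right).$$
So it suffices to prove the single-step bound $\lambda(I_{i+1}^{[q]}M/I_i^{[q]}M) \leq \lambda(M/\m^{[q]}M)$ for each $i$.

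For the single-step bound, since $I_{i+1} = (I_i, u_{i+1})$ we have $I_{i+1}^{[q]} = (I_i^{[q]}, u_{i+1}^q)$, so $I_{i+1}^{[q]}M = I_i^{[q]}M + u_{i+1}^q M$. Thus the map $M \to I_{i+1}^{[q]}M / I_i^{[q]}M$ sending $m \mapsto u_{i+1}^q m \pmod{I_i^{[q]}M}$ is surjective, inducing an isomorphism
$$I_{i+1}^{[q]}M \,/\, I_i^{[q]}M \;\cong\; M \,/\, (I_i^{[q]}M :_M u_{i+1}^q).$$
The key point now is that $u_{i+1} \in I_i :_R \m$, i.e.\ $u_{i+1}\m \subseteq I_i$. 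Raising to the $q$-th Frobenius power and using that $\m^{[q]}$ is generated by $q$-th powers of elements of $\m$, I want to conclude that $u_{i+1}^q \m^{[q]} \subseteq I_i^{[q]}$, which is immediate from $(s u_{i+1})^q = s^q u_{i+1}^q \in I_i^{[q]}$ for every $s \in \m$. Therefore $\m^{[q]} \subseteq I_i^{[q]} :_R u_{i+1}^q$, so $\m^{[q]}M \subseteq (I_i^{[q]}M :_M u_{i+1}^q)$, which gives
$$\lambda\!\left(I_{i+1}^{[q]}M/I_i^{[q]}M\right) = \lambda\!\left(M/(I_i^{[q]}M :_M u_{i+1}^q)\right) \leq \lambda(M/\m^{[q]}M).$$

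Summing over $i$ yields the desired inequality $\lambda(J^{[q]}M/I^{[q]}M) \leq \lambda(J/I)\,\lambda(M/\m^{[q]}M)$. There is no real obstacle here — the argument is a standard colon-ideal manipulation — the only point that requires any care is the verification $u_{i+1}^q \m^{[q]} \subseteq I_i^{[q]}$, which rests on the fact that Frobenius powers are generated by $q$-th powers and so behave well under the colon relation $u_{i+1}\m \subseteq I_i$.
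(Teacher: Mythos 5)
Your proof is correct and follows essentially the same route as the paper: filter via the chain $I_i$ from Remark~\ref{Remark}, identify each graded piece as $M/(I_i^{[q]}M :_M u_{i+1}^q)$, and use $u_{i+1}\in I_i :\m$ to get $\m^{[q]}M$ inside the colon. You merely spell out the Frobenius-power verification $u_{i+1}^q\m^{[q]}\subseteq I_i^{[q]}$ that the paper leaves implicit.
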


\begin{proof} Observe that $\lambda(J^{[q]}M/I^{[q]}M)=\sum_{i=1}^{\lambda(J/I)}\lambda(I_i^{[q]}M/I_{i-1}^{[q]}M)$, hence it is enough to show that if $I$ is $\m$-primary and $u\in (I:\m)$, then $\lambda((I,u)^{[q]}M/I^{[q]}M)\leq \lambda(M/\m^{[q]}M)$. Well, $(I,u)^{[q]}M/I^{[q]}M\simeq M/(I^{[q]}M:_{M}u^q)$. Since $u\in I:\m$ we have that $\m^{[q]}M\subseteq (I^{[q]}M:_M u^q) $, hence $\lambda(M/(I^{[q]}M:_{M}u^q))\leq \lambda(M/\m^{[q]}M).$

\end{proof}

\begin{Proposition}\label{Main Lemma} Let $R$ be an F-finite ring and M a finitely generated $R$-module. There exists a constant $C>0$ such that for all $P\in\Spec(R)$ and $q=p^e$, if $IR_P\subseteq JR_P$ is a $PR_P$-primary pair of ideals, then $$\lambda\left(\frac{J^{[q]}M_P}{I^{[q]}M_P}\right)\leq Cq^{\dim(M_P)}\lambda\left(\frac{JR_P}{IR_P}\right).$$
\end{Proposition}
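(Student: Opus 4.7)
The plan is to first invoke Lemma \ref{Inequalities}, applied to the local ring $R_P$ and the module $M_P$, to reduce the desired inequality to producing a uniform constant $C$ with $\lambda(M_P/P^{[q]}M_P) \le Cq^{\dim(M_P)}$ for every $P \in \Spec(R)$ and every $q$. The factor $\lambda(JR_P/IR_P)$ then appears automatically on the right-hand side of the target bound.

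Next I would reduce to the case where $M = R$ is an F-finite domain by choosing a prime filtration $0 = M_0 \subset M_1 \subset \cdots \subset M_n = M$ with quotients $M_j/M_{j-1} \cong R/\p_j$. Each $\p_j$ lies in $\Supp M$, so for every $j$ with $\p_j \subseteq P$ one has $\dim((R/\p_j)_P) \le \dim(M_P)$. Sub-additivity of length over short exact sequences then gives
\[
\lambda(M_P/P^{[q]}M_P) \le \sum_{j : \p_j \subseteq P} \lambda\bigl((R/\p_j)_P / P^{[q]}(R/\p_j)_P\bigr),
\]
so it suffices to prove the following for every F-finite domain $A$: there exists $C_A$ depending only on $A$ such that $\lambda(A_Q/Q^{[q]}A_Q) \le C_A q^{\height(Q)}$ for every $Q \in \Spec(A)$ and every $q$. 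Once this is proven, setting $C := \sum_j C_{R/\p_j}$ works globally for $M$.

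To establish the domain statement, apply Lemma \ref{Global version of Dutta's Lemma} to $A$, obtaining an embedding $A^{q^{\gamma(A)}} \hookrightarrow A^{1/q}$ whose cokernel $N_q$ carries a prime filtration with factors $A/\q$, $\q \in \mathcal{S}(A)$, each appearing at most $C' q^{\gamma(A)}$ times, where $\mathcal{S}(A)$ and $C'$ depend only on $A$. Localizing at $Q$ and reducing modulo $Q$, sub-additivity of length yields
\[
\lambda(A_Q^{1/q}/QA_Q^{1/q}) \le \lambda(A_Q^{q^{\gamma(A)}}/QA_Q^{q^{\gamma(A)}}) + \lambda(N_{q,Q}/QN_{q,Q}) = q^{\gamma(A)} + \lambda(N_{q,Q}/QN_{q,Q}),
\]
and each filtration factor $A_Q/\q A_Q$ of $N_{q,Q}$ contributes $\lambda(A_Q/(\q+Q)A_Q)$, which is $1$ when $\q \subseteq Q$ and $0$ otherwise. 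Hence $\lambda(N_{q,Q}/QN_{q,Q}) \le |\mathcal{S}(A)| C' q^{\gamma(A)}$. Finally, Lemma \ref{Length Lemma} rewrites the left side as $q^{\alpha(Q)}\lambda(A_Q/Q^{[q]}A_Q)$, and using the identity $\gamma(A) = \alpha(Q) + \height(Q)$ one recovers $\lambda(A_Q/Q^{[q]}A_Q) \le (1 + |\mathcal{S}(A)| C')\, q^{\height(Q)}$.

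The main technical obstacle will be the bookkeeping at the reduction step: one needs that each $\p_j$ in the prime filtration of $M$ lies in $\Supp M$ (to secure the dimension inequality $\dim((R/\p_j)_P) \le \dim(M_P)$ whenever $\p_j \subseteq P$) and that the constants assembled from each F-finite domain $R/\p_j$ combine into a single constant depending only on $M$. Once those compatibilities are in hand, the final estimate is a direct application of the global Dutta filtration in Corollary \ref{Global version of Dutta's Lemma part 3}, combined with the residue-field length identity of Lemma \ref{Length Lemma}.
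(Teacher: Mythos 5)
Your proposal is correct and follows essentially the same route as the paper: reduce via Lemma \ref{Inequalities} to bounding $\lambda(M_P/P^{[q]}M_P)$, pass through a prime filtration of $M$ to the domain case, and then combine the global Dutta filtration (Lemma \ref{Global version of Dutta's Lemma}) with the length identity of Lemma \ref{Length Lemma} and the relation $\gamma(A)=\alpha(Q)+\height(Q)$. The paper phrases the final estimate by explicitly filtering the quotient $R_P^{1/q}/(R_P^{q^{\gamma(R)}}+PR_P^{1/q})$ rather than invoking subadditivity of length after tensoring with the residue field, but the content is the same; your careful tracking of $\dim((R/\p_j)_P)\le\dim(M_P)$ is exactly the bookkeeping implicit in the paper's reduction.
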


\begin{proof} By Lemma \ref{Inequalities} we only need to find a constant $C$ such that for all $P\in \Spec(R)$ and all $q$, $\lambda(M_P/P^{[q]}M_P)\leq Cq^{\dim(M_P)}.$ If $M_0\subseteq M_1\subseteq \cdots \subseteq M_n=M$ is a prime filtration of $M$ with $M_i/M_{i-1}\simeq R/P_i$, then $\lambda(M_P/P^{[q]}M_P)\leq \sum_{i=1}^{n}\lambda(R_P/(P_i+P^{[q]})R_P)$. This reduces the Proposition to showing that if $R$ is an F-finite domain, then there is a constant $C$ such that for all $P\in \Spec(R)$, $\lambda(R_P/P^{[q]}R_P)\leq Cq^{\heigh(P)}.$

\medskip

Suppose that $R$ is an F-finite domain. Let $\mathcal{S}(R)$ be the finite set of primes given by Proposition \ref{Global version of Dutta's Lemma} for the $R$-module $R$, and suppose that $P\in \Spec(R)$. By Lemma \ref{Length Lemma}, $\lambda(R_P/P^{[q]}R_P)=\frac{1}{q^{\alpha(R_P)}}\lambda(R_P^{1/q}/PR_P^{1/q})$, so it is equivalent to show that $\lambda(R_P^{1/q}/PR_P^{1/q})\leq Cq^{\gamma(R)}$ for some $C$ that does not depend on $P$. From the short exact sequence, $$0\rightarrow R^{q^{\gamma(R)}}\rightarrow R^{1/q}\rightarrow R^{1/q}/R^{q^{\gamma(R)}}\rightarrow 0,$$ we have that,
\begin{eqnarray*}
\lambda(R^{1/q}_P/PR_P^{1/q})&\leq& \lambda(R^{q^{\gamma(R)}}_P/PR^{q^{\gamma(R)}}_P)+\lambda(R^{1/q}/(R^{q^{\gamma(R)}}_P+PR_P^{1/q}))\\\\
&=&q^{\gamma(R)}+\lambda(R^{1/q}/(R^{q^{\gamma(R)}}_P+PR_P^{1/q})).
\end{eqnarray*}
Therefore we only need to find a constant $C$, independent of $P$, such that\\ $\lambda(R^{1/q}/(R^{q^{\gamma(R)}}_P+PR_P^{1/q}))\leq Cq^{\gamma(R)}$.

\medskip

Before localizing at $P$ we can apply Proposition \ref{Global version of Dutta's Lemma} to know that there exists a filtration of $R^{q^{\gamma(R)}}\subseteq R^{1/q}$, say $R^{q^{\gamma(R)}}=N_0\subseteq N_1\subseteq \cdots \subseteq N_n=R^{1/q}$, such that $n\leq C'|\mathcal{S}(R)|q^{\gamma(R)}$, where $C'$ is completely independent of $P$, and each $N_i/N_{i-1}\simeq R/P_i$ for some $P_i\in \mathcal{S}(R)$. For convenience let $M_i=(N_i)_P$. Localizing at $P$ and adding $PR_P^{1/q}$ to each module $M_i$ gives a filtration of $R^{q^{\gamma(R)}}_P+PR_P^{1/q}\subseteq R_P^{1/q}$ whose factors are $(M_i+PR_P^{1/q})/(M_{i-1}+PR_P^{1/q})\simeq M_i/(M_{i-1}+(PR_P^{1/q}\cap M_i))$. Noticing that $PR_P^{1/q}\cap M_i\supseteq PM_i$ we get that 
\begin{eqnarray*}
\lambda\left((M_i+PR_P^{1/q})/(M_{i-1}+PR_P^{1/q})\right)&=&\lambda\left(M_i/(M_{i-1}+(PR_P^{1/q}\cap M_i))\right)\\ \\
&\leq&\lambda\left(M_i/(M_{i-1}+PM_i)\right)\\ \\
&=&\lambda(R_P/(P_iR_P+PR_P))\\ \\
&\leq&\lambda(R_P/PR_P)=1.
\end{eqnarray*}
It now follows that $\lambda(R^{1/q}_P/PR_P^{1/q})\leq (1+C'|\mathcal{S}(R)|)q^{\gamma(R)}$.

\end{proof}

\begin{Corollary}\label{Isomorphic at min prime} Let $R$ be an F-finite ring, $N, M$ two finitely generated $R$-modules which are isomorphic at minimal primes of $R$. Then there is a constant $C$ such that for all $P\in \Spec(R)$ and $q=p^e$, if $IR_P\subseteq JR_P$ is a $PR_P$-primary pair of ideals, then $$\left|\lambda\left(\frac{J^{[q]}M_P}{I^{[q]}M_P}\right)-\lambda\left(\frac{J^{[q]}N_P}{I^{[q]}N_P}\right)\right|\leq Cq^{\heigh(P)-1}\lambda\left(\frac{JR_P}{IR_P}\right).$$

\end{Corollary}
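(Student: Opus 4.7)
The plan is to replace the direct comparison of $M$ and $N$ by one through the image of a homomorphism $\phi\colon M \to N$ that is an isomorphism at every minimal prime of $R$, and then to bound the discrepancies using Proposition \ref{Main Lemma} applied to the kernel and cokernel of $\phi$ — each of which is supported only on primes of positive height.

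Since $M_P \cong N_P$ at every minimal prime $P$, inverting the multiplicative set $W$ of elements avoiding every minimal prime gives $W^{-1}M \cong W^{-1}N$. Clearing denominators produces $\phi \in \Hom_R(M,N)$ and $s \in W$ so that $\phi$ becomes an isomorphism upon inverting $s$. Writing $K = \ker\phi$, $M' = \im\phi$, and $\Gamma = \coker\phi$, both $K$ and $\Gamma$ are annihilated by a power of $s$ and hence are supported on primes containing $s$, none of which is minimal in $R$; in particular $\dim K_P, \dim \Gamma_P \leq \height(P)-1$ whenever nonzero at $P$.

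The two short exact sequences $0 \to K \to M \to M' \to 0$ and $0 \to M' \to N \to \Gamma \to 0$ combined with a second-isomorphism-theorem computation on the induced surjections $M_P/I^{[q]}M_P \twoheadrightarrow M'_P/I^{[q]}M'_P$ and $N_P/I^{[q]}N_P \twoheadrightarrow \Gamma_P/I^{[q]}\Gamma_P$ yield
\begin{align*}
\lambda\!\left(\tfrac{J^{[q]}M_P}{I^{[q]}M_P}\right) &= \lambda\!\left(\tfrac{J^{[q]}M'_P}{I^{[q]}M'_P}\right) + \lambda\!\left(\tfrac{K_P \cap J^{[q]}M_P}{K_P \cap I^{[q]}M_P}\right), \\
\lambda\!\left(\tfrac{J^{[q]}N_P}{I^{[q]}N_P}\right) &= \lambda\!\left(\tfrac{M'_P \cap J^{[q]}N_P}{M'_P \cap I^{[q]}N_P}\right) + \lambda\!\left(\tfrac{J^{[q]}\Gamma_P}{I^{[q]}\Gamma_P}\right).
\end{align*}
Subtracting, and using the inclusions $I^{[q]}K_P \subseteq K_P \cap I^{[q]}M_P$ and $I^{[q]}M'_P \subseteq M'_P \cap I^{[q]}N_P$ (and their $J$-analogues), the target difference can be written as $\lambda(J^{[q]}K_P/I^{[q]}K_P) - \lambda(J^{[q]}\Gamma_P/I^{[q]}\Gamma_P)$ plus error terms of two types: (i) submodules of $K_P/\bullet^{[q]}K_P$ arising from $(K_P \cap \bullet^{[q]}M_P)/\bullet^{[q]}K_P$, and (ii) images of $\Tor_1^{R_P}(R_P/\bullet^{[q]}, \Gamma_P)$ in $M'_P/\bullet^{[q]}M'_P$ from the Tor long-exact-sequence of the second short exact sequence.

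Proposition \ref{Main Lemma} applied to $K$ and to $\Gamma$ directly bounds $\lambda(J^{[q]}K_P/I^{[q]}K_P)$ and $\lambda(J^{[q]}\Gamma_P/I^{[q]}\Gamma_P)$ by $Cq^{\height(P)-1}\lambda(JR_P/IR_P)$. The type-(i) errors sit inside $K_P/\m^{[q]}K_P$, whose length is $O(q^{\height(P)-1})$ by another application of Proposition \ref{Main Lemma} with $I = \m$, $J = R_P$. The type-(ii) $\Tor$-errors are controlled by running a prime filtration of $\Gamma$, all of whose prime factors $R/\p$ satisfy $\height(\p) \geq 1$, through the Tor long-exact-sequence and bounding each piece by $\lambda((I^{[q]} \cap \p)/I^{[q]}\p)$, which is $O(q^{\height(P)-1})$ using Proposition \ref{Main Lemma} on $R/\p$ together with a uniform Artin--Rees estimate available for the F-finite (hence excellent) ring $R$. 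Assembling via the triangle inequality, and handling the trivial case $\lambda(JR_P/IR_P) = 0$ (in which $I = J$ and the corollary's left-hand side vanishes) separately, absorbs the $O(q^{\height(P)-1})$ errors into a constant multiple of $q^{\height(P)-1}\lambda(JR_P/IR_P)$, producing the uniform $C$. The principal technical obstacle is the bookkeeping for the type-(ii) $\Tor$-errors; the essential content throughout, however, is that every auxiliary module appearing has support on primes of positive height, so Proposition \ref{Main Lemma} consistently produces the required $q^{\height(P)-1}$ bounds rather than the naive $q^{\height(P)}$.
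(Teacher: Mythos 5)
Your overall strategy — exhibit a homomorphism between $M$ and $N$ that is an isomorphism at minimal primes and route the difference of lengths through its kernel and cokernel, which have positive-height support — is the right conceptual starting point and is indeed close in spirit to the paper's. But the execution diverges, and the divergence opens a gap. The paper first reduces, via a filtration of $J/I$ and the triangle inequality, to the case $J=(I,u)$ with $u\in(I:P)$, so that $J^{[q]}M_P/I^{[q]}M_P\simeq M_P/(I^{[q]}M_P:_{M_P}u^q)$ is a \emph{cyclic quotient of $M_P$} killed by $P^{[q]}$. It then uses \emph{two} maps, $\varphi\colon M\to N$ and $\psi\colon N\to M$, each surjective at minimal primes with cokernels $T_1,T_2$ supported on positive-height primes, and observes from a direct diagram chase that each one-sided difference of lengths is bounded by $\lambda\bigl((T_i)_P/P^{[q]}(T_i)_P\bigr)$. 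This is immediately handled by Proposition \ref{Main Lemma} because $\dim (T_i)_P\le \height(P)-1$. No $\Tor$ ever appears.

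Your version uses a single map and so you must account for the failure of $\otimes R/I^{[q]}$ to be exact on $0\to M'\to N\to\Gamma\to 0$; this is your ``type-(ii)'' $\Tor$ error. That is precisely the step that does not go through as written. You assert these terms are bounded by quantities of the form $\lambda\bigl((I^{[q]}\cap\mathfrak p)/I^{[q]}\mathfrak p\bigr)=O(q^{\height(P)-1})$ via ``a uniform Artin--Rees estimate,'' but no such estimate of the required strength — uniform in $P$, in the $PR_P$-primary ideal $I$, and in $q$, and of order $q^{\height(P)-1}$ rather than $q^{\height(P)}$ — is established or cited, and it does not follow from Proposition \ref{Main Lemma}, which bounds lengths of modules of the form $J^{[q]}M_P/I^{[q]}M_P$, not $\Tor_1$'s. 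The statement you would need is essentially the content of the paper's Corollary \ref{Corollary to Isomorphic at min prime}, which is itself \emph{derived from} the corollary you are now trying to prove, so invoking it here would be circular. The fix is exactly the paper's device: do the $\lambda(J/I)$-fold reduction to $J=(I,u)$ first, so that all the objects you need to compare are cyclic quotients of $M,N$ killed by $P^{[q]}$, and work with maps in both directions so that each one-sided bound is a $\lambda(T_i/P^{[q]}T_i)$ rather than a $\Tor$.
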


\begin{proof} Using the notation in Remark \ref{Remark} and applying the triangle inequality $$\left|\lambda\left(\frac{J^{[q]}M_P}{I^{[q]}M_P}\right)-\lambda\left(\frac{J^{[q]}N_P}{I^{[q]}N_P}\right)\right|\leq \sum_{i=1}^{\lambda(J/I)}\left|\lambda\left(\frac{I_i^{[q]}M_P}{I_{i-1}^{[q]}M_P}\right)-\lambda\left(\frac{I_i^{[q]}N_P}{I_{i-1}^{[q]}N_P}\right)\right|.$$ Thus we may reduce ourselves to the scenario that $J=(I,u)$ where $u\in (I: P)$. There are exact sequences $M\xrightarrow{\varphi} N\rightarrow T_1\rightarrow 0$ and $N\xrightarrow{\psi} M\rightarrow T_2\rightarrow 0$, for which $T_1, T_2$ are $0$ when localized at minimal primes of $R$. Observe that $\varphi(I^{[q]}M:_M u^q)\subseteq (I^{[q]}N:_N u^q)$ so that there is induced map $\frac{M_P}{(I^{[q]}M_P:_{M_P}u^q)}\rightarrow \frac{N_P}{(I^{[q]}N_P:_{N_P}u^q)}$ whose cokernel, say $(T_1')_P$ is naturally the homomorphic image of $(T_1)_P$. Thus we have the following commutative diagram.
$$\begin{tikzpicture}
\node(1){$M_P$};
\node(2)[node distance=2in, right of=1]{$N_P$};
\node(3)[node distance=1in, below of=1]{$\frac{M_P}{(I^{[q]}M_P:_{M_P}u^q)}$};
\node(4)[node distance=2in, right of=3]{$\frac{N_P}{(I^{[q]}N_P:_{N_P}u^q)}$};
\node(5)[node distance=1.5in, right of=2]{$(T_1)_P$};
\node(6)[node distance=1in, right of=5]{$0$};
\node(7)[node distance=1.5in, right of=4]{$(T_1')_P$};
\node(8)[node distance=1in, right of=7]{$0$};
\draw[->](1) to node [above]{$\varphi$}(2);
\draw[->](1) to node [left]{}(3);
\draw[->](3) to node [above]{}(4);
\draw[->](2) to node [right]{$\pi_1$}(4);
\draw[->](2) to node [right]{}(5);
\draw[->](5) to node [right]{}(6);
\draw[->](5) to node [right]{$\pi_2$}(7);
\draw[->](4) to node [right]{}(7);
\draw[->](7) to node [right]{}(8);
\end{tikzpicture}$$
Therefore $\lambda\left(\frac{N_P}{(I^{[q]}N_P:_{N_P}u^q)}\right)-\lambda\left(\frac{M_P}{(I^{[q]}M_P:_{M_P}u^q)}\right)\leq \lambda((T_1')_P)$. Observe that $P^{[q]}N_P\subseteq (I^{[q]}N_P:_{N_P}u^q)$ so that $\pi_1(P^{[q]}N_P)=0$ and therefore $\pi_2(P^{[q]}(T_1)_P)=0$. Hence $(T_1')_P$ is the homomorphic image of $\frac{(T_1)_P}{P^{[q]}(T_1)_P}$. Thus
\begin{eqnarray*}
\lambda\left(\frac{(I,u)^{[q]}M_P}{I^{[q]}M_P}\right) &-& \lambda\left(\frac{(I,u)^{[q]}N_P}{I^{[q]}N_P}\right)\\ \\&=& \lambda\left(\frac{N_P}{(I^{[q]}N_P:_{N_P}u^q)}\right)-\lambda\left(\frac{M_P}{(I^{[q]}M_P:_{M_P}u^q)}\right)\leq \lambda\left(\frac{(T_1)_P}{P^{[q]}(T_1)_P}\right).
\end{eqnarray*}
A similar argument applied to the exact sequence $N\rightarrow M \rightarrow T_2 \rightarrow 0$ implies that $$\left|\lambda\left(\frac{J^{[q]}M_P}{I^{[q]}M_P}\right)-\lambda\left(\frac{J^{[q]}N_P}{I^{[q]}N_P}\right)\right|\leq \max_{i=1,2}\left\{\lambda\left(\frac{(T_i)_P}{P^{[q]}(T_i)_P}\right)\right\}.$$ The Corollary now follows by Proposition \ref{Main Lemma}.


\end{proof}

\begin{Corollary}\label{Corollary to Isomorphic at min prime} Let $R$ be an F-finite ring and $0\rightarrow M'\rightarrow M\rightarrow M''\rightarrow 0$ a short exact sequence of finitely generated $R$-modules. There exists a constant $C$ such that for all $P\in \Spec(R)$ and $q=p^e$, if $IR_P\subseteq JR_P$ is a $PR_P$-primary pair of ideals, then 
\begin{eqnarray*}
\left|\lambda\left(\frac{J^{[q]}M_P}{I^{[q]}M_P}\right)-\lambda\left(\frac{J^{[q]}M'_P}{I^{[q]}M'_P}\right)-\lambda\left(\frac{J^{[q]}M''_P}{I^{[q]}M''_P}\right)\right|\leq Cq^{\dim(M_P)-1}\lambda\left(\frac{JR_P}{IR_P}\right).
\end{eqnarray*}

\end{Corollary}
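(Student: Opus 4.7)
The plan is to apply Corollary \ref{Isomorphic at min prime} to the pair $(M, N)$ where $N := M' \oplus M''$. Since length is additive across a direct sum, $\lambda(J^{[q]}N_P/I^{[q]}N_P) = \lambda(J^{[q]}M'_P/I^{[q]}M'_P) + \lambda(J^{[q]}M''_P/I^{[q]}M''_P)$, so the statement reduces to bounding $|\lambda(J^{[q]}M_P/I^{[q]}M_P) - \lambda(J^{[q]}N_P/I^{[q]}N_P)|$ by the method of the preceding Corollary.

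To invoke that Corollary I would construct homomorphisms $\varphi: M \to N$ and $\psi: N \to M$ whose cokernels are annihilated by an element $s \in R$ avoiding every minimal prime. The natural choices are $\psi(m', m'') := \iota(m') + \sigma(m'')$ and $\varphi(m) := (\rho(m), \pi(m))$, where $\sigma: M'' \to M$ is a ``near section'' of $\pi$ with $\pi \circ \sigma = s^k \cdot \mathrm{id}_{M''}$ and $\rho: M \to M'$ is a ``near retraction'' of $\iota$ with $\rho \circ \iota = s^k \cdot \mathrm{id}_{M'}$. A direct computation gives $\coker(\psi) \cong M''/s^k M''$ and $\coker(\varphi) \cong M'/s^k M'$, both killed by $s^k$ and hence zero at every minimal prime. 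Such $\sigma$ and $\rho$ exist provided the short exact sequence $0 \to M' \to M \to M'' \to 0$ splits after inverting a nonzerodivisor of $R$; in the reduced case this is automatic, because the total quotient ring is a product of fields over which every short exact sequence of modules splits, and the resulting splitting may be cleared of denominators.

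Corollary \ref{Isomorphic at min prime} then yields $|\lambda(J^{[q]}M_P/I^{[q]}M_P) - \lambda(J^{[q]}N_P/I^{[q]}N_P)| \leq C q^{\height(P) - 1}\lambda(JR_P/IR_P)$. To sharpen the exponent $\height(P)$ to $\dim(M_P)$, I would replace $R$ by the F-finite quotient $R/\Ann(M)$: none of the lengths in question are affected (all three of $M$, $M'$, $M''$ are killed by $\Ann(M)$), while in the new ring the image of $P$ has height exactly $\dim(R_P/\Ann(M)R_P) = \dim(M_P)$. The main obstacle is handling the non-reduced case, where the SES may fail to split even after inverting any nonzerodivisor of $R$; in that situation the cleanest fallback is to abandon the comparison with $N$ and instead bound the discrepancy $(M'_P \cap I^{[q]}M_P)/I^{[q]}M'_P$ --- which is a quotient of $\Tor_1^R(M'', R/I^{[q]})_P$ appearing in the long exact $\Tor$ sequence of $0 \to M' \to M \to M'' \to 0$ --- directly via Proposition \ref{Main Lemma}, after first reducing through Remark \ref{Remark} to the case $J = (I, u)$ with $u \in I :_R P$.
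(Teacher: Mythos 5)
Your reduction to $R/\Ann_R M$ and your treatment of the reduced case match the paper: over the reduced ring, the total quotient ring is a product of fields, the sequence splits after inverting the relevant $s$, so $M$ and $M'\oplus M''$ agree at minimal primes and Corollary~\ref{Isomorphic at min prime} applies. (A small slip: with your $\varphi(m)=(\rho(m),\pi(m))$ the cokernel is not literally $M'/s^kM'$, but it is still killed by a fixed power of $s$, which is all one needs.)

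The genuine gap is your ``fallback'' for the non-reduced case. Bounding the discrepancy by $\lambda(\Tor_1^R(M'',R/I^{[q]})_P)$ is correct as far as it goes, but Proposition~\ref{Main Lemma} only controls quantities of the form $\lambda(T_P/P^{[q]}T_P)$ for a \emph{fixed} module $T$, giving a bound $Cq^{\dim(T_P)}$; the savings of one power of $q$ in Corollary~\ref{Isomorphic at min prime} comes precisely from the fact that the auxiliary module $T$ has $\dim(T_P)\le\dim(M_P)-1$, which in turn is what being isomorphic at minimal primes buys you. The module $\Tor_1^R(M'',R/I^{[q]})$ depends on $q$ and $I$ and is not a priori supported on a proper closed subset of $\Supp(M)$ when the sequence fails to split at a minimal prime, so nothing you have said extracts the missing factor of $q$. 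The paper instead disposes of the non-reduced case by a Frobenius trick: choose $e_0$ with $\sqrt{0}^{[p^{e_0}]}=0$, so that $F^{e_0}(R)\cong R/\sqrt{0}$ is reduced and $R$ is module-finite over it; then rewrite each length $\lambda_{R_P}(M_P/I^{[qq_0]}M_P)$ as $q_0^{-\alpha(P)}\lambda_{F^{e_0}(R_P)}(M_P/(I^{[q_0]}\cap F^{e_0}(R))^{[q]}M_P)$ and invoke the reduced case over $F^{e_0}(R)$. You need some such reduction; the $\Tor_1$ estimate alone does not close the argument.
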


\begin{proof} Observe that $\lambda\left(\frac{(J+\tiny{\Ann}_R M)R_P}{(I+\tiny{\Ann}_R M)R_P}\right)\leq \lambda\left(\frac{JR_P}{IR_P}\right)$. Therefore we can begin by replacing $R$ with $R/\Ann_R M$ so that $\height(P)=\dim M_P$ for all $P\in \Spec(R)$. If $R$ is reduced then $M$ is isomorphic to $M'\bigoplus M''$ at minimal primes of $R$ and we can apply Corollary \ref{Isomorphic at min prime}. Suppose $R$ is not reduced. Using a standard argument, we can reduce to the scenario that $R$ is reduced. See for example the proofs of Lemma 1.5 in \cite{Monsky1983} and Proposition 3.11 in \cite{Huneke2013}. Let $e_0$ be a large enough integer so that for $q_0=p^{e_0}$, $\sqrt{0}^{[q_0]}=0$. Let $F: R\rightarrow R$ be the Frobenius endomorphism. Then $F^{e_0}(R)$ is abstractly isomorphic to the reduced ring $R/\sqrt{0}$ and $R$ is module finite over $F^{e_0}(R)$. Then for all $P\in \Spec(R)$ and $IR_P\subseteq PR_P$ which is $PR_P$-primary, $$\frac{1}{q_0^{\alpha(P)}}\lambda_{F^{e_0}(R_P)}\left(\frac{M_P}{(I^{[q_0]}\cap F^{e_0}(R))^{[q]}M_P}\right)=\lambda_{R_P}\left(\frac{M_P}{(I^{[q_0]}\cap F^{e_0}(R))^{[q]}M_P}\right)=\lambda_{R_P}\left(\frac{M_P}{I^{[qq_0]}M_P}\right).$$

\end{proof}

\begin{Theorem}\label{The main bound} Let $R$ be an F-finite ring and $M$ a finitely generated $R$-module. There exists a constant $C$ such that for all $P\in \Spec(R)$, for all $q_1,q_2$, if $IR_P\subseteq JR_P$ is a $PR_P$-primary pair of ideals, then $$\left|\lambda\left(\frac{J^{[q_1]}M_P}{I^{[q_1]}M_P}\right)q_2^{\heigh(P)}-\lambda\left(\frac{J^{[q_1q_2]}M_P}{I^{[q_1q_2]}M_P}\right)\right|\leq C q_2^{\dim(M_P)}q_1^{\dim(M_P)-1}\lambda\left(\frac{JR_P}{IR_P}\right).$$

\end{Theorem}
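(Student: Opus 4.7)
The plan is to prove the theorem in two stages: first reduce to the case where $M=R$ and $R$ is an F-finite domain via the approximate additivity from Corollary \ref{Corollary to Isomorphic at min prime}, and then handle that base case by combining Lemma \ref{Length Lemma} with the global Dutta Lemma (Corollary \ref{Global version of Dutta's Lemma part 3}).

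For the reduction, I replace $R$ by $R/\Ann_R M$, as in the proof of Corollary \ref{Corollary to Isomorphic at min prime}, which makes $M$ faithful and forces $\dim M_P = \height(P)$ for all $P \in \Spec(R)$. I then take a prime filtration $0 = M_0 \subset M_1 \subset \cdots \subset M_n = M$ with $M_j/M_{j-1} \simeq R/Q_j$ and apply Corollary \ref{Corollary to Isomorphic at min prime} iteratively at the Frobenius exponents $q_1$ and $q_1 q_2$. This shows that the difference $q_2^{\height(P)}\lambda(J^{[q_1]}M_P/I^{[q_1]}M_P) - \lambda(J^{[q_1q_2]}M_P/I^{[q_1q_2]}M_P)$ agrees, up to an error of order $q_2^{\height(P)}q_1^{\height(P)-1}\lambda(JR_P/IR_P)$, with the analogous sum over the filtration factors $R/Q_j$. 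For any non-minimal $Q_j$, catenarity of the excellent ring $R$ gives $\dim(R/Q_j)_P \leq \height(P)-1$, so both terms in the corresponding summand are directly dominated by $Cq_2^{\height(P)}q_1^{\height(P)-1}\lambda(JR_P/IR_P)$ through Lemma \ref{Inequalities} and Proposition \ref{Main Lemma}. Hence only the minimal $Q_j$ contribute, and after replacing $R$ by the F-finite domain $R/Q_j$, it suffices to treat $M=R$ with $R$ an F-finite domain.

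In that base case, Lemma \ref{Length Lemma} applied to the $PR_P$-primary ideal $I^{[q_1]}R_P$ (and likewise to $J^{[q_1]}R_P$, which is either $PR_P$-primary or equal to $R_P$) yields $q_2^{\alpha(P)}\lambda(J^{[q_1q_2]}R_P/I^{[q_1q_2]}R_P) = \lambda(J^{[q_1]}R_P^{1/q_2}/I^{[q_1]}R_P^{1/q_2})$. On the other hand, since $R_P^{q_2^{\gamma(R)}}$ is free of rank $q_2^{\gamma(R)}$ and $\gamma(R) = \alpha(P)+\height(P)$ for the F-finite domain $R$, I also obtain $q_2^{\alpha(P)+\height(P)}\lambda(J^{[q_1]}R_P/I^{[q_1]}R_P) = \lambda(J^{[q_1]}R_P^{q_2^{\gamma(R)}}/I^{[q_1]}R_P^{q_2^{\gamma(R)}})$. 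After multiplying through by $q_2^{\alpha(P)}$, the theorem reduces to establishing $|\lambda(J^{[q_1]}R_P^{q_2^{\gamma(R)}}/I^{[q_1]}R_P^{q_2^{\gamma(R)}}) - \lambda(J^{[q_1]}R_P^{1/q_2}/I^{[q_1]}R_P^{1/q_2})| \leq C q_2^{\gamma(R)}q_1^{\height(P)-1}\lambda(JR_P/IR_P)$ with $C$ independent of $q_1$, $q_2$, $P$, $I$, and $J$.

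This last bound is furnished by Corollary \ref{Global version of Dutta's Lemma part 3}, which supplies a finite set $\mathcal{S}(R)$ of nonzero primes, a constant $C_0$, and inclusions $R^{q_2^{\gamma(R)}} \hookrightarrow R^{1/q_2}$ and $R^{1/q_2} \hookrightarrow R^{q_2^{\gamma(R)}}$ whose cokernels $T_1, T_2$ admit prime filtrations by $R/P'$, $P' \in \mathcal{S}(R)$, with each factor appearing at most $C_0 q_2^{\gamma(R)}$ times. The intermediate estimate produced inside the proof of Corollary \ref{Isomorphic at min prime} dominates the target difference by $\lambda(JR_P/IR_P) \cdot \max_i \lambda((T_i)_P/P^{[q_1]}(T_i)_P)$. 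Filtering $T_i$ and applying Proposition \ref{Main Lemma} to each fixed module $R/P'$, and using the catenarity bound $\dim(R/P')_P \leq \height(P)-1$ (valid since $P' \ne 0$), yields the uniform estimate $\lambda((T_i)_P/P^{[q_1]}(T_i)_P) \leq C' q_2^{\gamma(R)} q_1^{\height(P)-1}$, completing the proof after dividing by $q_2^{\alpha(P)}$ and invoking $\gamma(R) - \alpha(P) = \height(P)$. The main obstacle is exactly this final uniform bound: the constant in Corollary \ref{Isomorphic at min prime} \emph{a priori} depends on the pair of modules, but the global Dutta Lemma was designed precisely so that the cokernel lengths grow at the correct rate $q_2^{\gamma(R)}$ when applied to the $q_2$-varying family $R^{1/q_2}$, $R^{q_2^{\gamma(R)}}$.
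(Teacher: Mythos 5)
Your proof is correct and follows essentially the same route as the paper's: reduce via Corollary \ref{Corollary to Isomorphic at min prime} and a prime filtration to the case where $M=R$ is an F-finite domain, convert the $q_1q_2$-term via Lemma \ref{Length Lemma}, run the commutative-diagram comparison of $R_P^{1/q_2}$ against $R_P^{q_2^{\gamma(R)}}$ using Corollary \ref{Global version of Dutta's Lemma part 3}, and finish with Proposition \ref{Main Lemma} applied to the fixed nonzero primes in $\mathcal{S}(R)$. The only cosmetic difference is that you bound the non-minimal filtration factors directly through Lemma \ref{Inequalities} and Proposition \ref{Main Lemma}, whereas the paper absorbs all factors into the same inductive reduction.
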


\begin{proof} As in the proof of Corollary \ref{Corollary to Isomorphic at min prime}, we may replace $R$ by $R/\Ann_R M$ so that $\dim(M_P)=\height P$ for all $P\in \Spec(R)$. If there is a short exact sequence $0\rightarrow M'\rightarrow M\rightarrow M''\rightarrow 0$, then 

\begin{eqnarray*}
\left|\lambda\left(\frac{J^{[q_1]}M_P}{I^{[q_1]}M_P}\right)q_2^{\heigh(P)}-\lambda\left(\frac{J^{[q_1q_2]}M_P}{I^{[q_1q_2]}M_P}\right)\right|&\leq& A_1+A_2+A_3+A_4.
\end{eqnarray*}
Where
\begin{eqnarray*}
A_1&=& \left|\lambda\left(\frac{J^{[q_1]}M_P}{I^{[q_1]}M_P}\right)-\lambda\left(\frac{J^{[q_1]}(M'_P\bigoplus M''_P)}{I^{[q_1]}(M'_P\bigoplus M''_P)}\right)\right|q_2^{\heigh(P)}\\ \\
A_2&=& \left|\lambda\left(\frac{J^{[q_1q_2]}M_P}{I^{[q_1q_2]}M_P}\right)-\lambda\left(\frac{J^{[q_1q_2]}(M'_P\bigoplus M''_P)}{I^{[q_1q_2]}(M'_P\bigoplus M''_P)}\right)\right|\\ \\
A_3&=&\left|\lambda\left(\frac{J^{[q_1]}M'_P}{I^{[q_1]}M'_P}\right)q_2^{\heigh(P)}-\lambda\left(\frac{J^{[q_1q_2]}M'_P}{I^{[q_1q_2]}M'_P}\right)\right|\\ \\
A_4&=&\left|\lambda\left(\frac{J^{[q_1]}M''_P}{I^{[q_1]}M''_P}\right)q_2^{\heigh(P)}-\lambda\left(\frac{J^{[q_1q_2]}M''_P}{I^{[q_1q_2]}M''_P}\right)\right|.
\end{eqnarray*}
By Corollary \ref{Corollary to Isomorphic at min prime} there is a constant $C$ such that $A_1\leq Cq_1^{\heigh(P)-1}\lambda\left(\frac{JR_P}{IR_P}\right)q_2^{\heigh(P)}$ and $A_2\leq C(q_2q_1)^{\heigh(P)-1}\lambda\left(\frac{JR_P}{IR_P}\right)$. Therefore by considering a prime filtration of the module $M$, we can reduce proving the theorem to the scenario that $M=R/P$ for some prime $P\in \Spec(R)$, i.e., we may assume that $M=R$ is an F-finite domain. Observe that by Lemma \ref{Length Lemma}, $\lambda\left(\frac{J^{[q_1q_2]}R_P}{I^{[q_1q_2]}R_P}\right)=\frac{1}{q_2^{\alpha(P)}}\lambda\left(\frac{J^{[q_1]}R^{1/q_2}_P}{I^{[q_1]}R^{1/q_2}_P}\right).$ Therefore the theorem is now reduced to showing that there is a constant $C$ independent of $P,I,J,q_1,q_2$ such that $$\left|\lambda\left(\frac{J^{[q_1]}R_P}{I^{[q_1]}R_P}\right)q_2^{\gamma(R)}-\lambda\left(\frac{J^{[q_1]}R^{1/q_2}_P}{I^{[q_1]}R^{1/q_2}_P}\right)\right|\leq Cq_2^{\gamma(R)}q_1^{\heigh(P)-1}\lambda\left(\frac{JR_P}{IR_P}\right).$$
As in the proof of Corollary \ref{Isomorphic at min prime} we can further reduce to the scenario that $J=(I,u)$ where $u\in (I:P)$.

\medskip

Let $C, \mathcal{S}(R)$ be as in Lemma \ref{Global version of Dutta's Lemma part 3} with corresponding inclusions of $R$-modules $R^{1/q}\rightarrow R^{q^{\gamma(R)}}$ and $R^{q^{\gamma(R)}}\rightarrow R^{1/q}$ whose cokernels are $T_1(q)$ and $T_2(q)$ respectively. So there are exact sequences $0\rightarrow R^{
1/q}\rightarrow R^{q^{\gamma(R)}}\rightarrow T_1(q)\rightarrow 0$ and $0\rightarrow R^{
q^{\gamma(R)}}\rightarrow R^{1/q}\rightarrow T_2(q)\rightarrow 0$ so that both $T_1(q)$ and $T_2(q)$ have a prime filtration whose prime factors are isomorphic to $R/Q$ where $Q\in \mathcal{S}(R)$ and such a prime factor appears no more than $Cq^{\gamma(R)}$ times in the filtration. As in the proof of Corollary \ref{Isomorphic at min prime} there will be the following commutative diagrams with all vertical maps being surjective.
$$\begin{tikzpicture}
\node(1){$R_P^{1/q_2}$};
\node(2)[node distance=2in, right of=1]{$R^{q_2^{\gamma(R)}}_P$};
\node(3)[node distance=1in, below of=1]{$\frac{R^{1/q_2}_P}{(I^{[q]}R^{1/q_2}_P:_{R^{1/q_2}_P}u^q)}$};
\node(4)[node distance=2in, right of=3]{$\frac{R^{q^{\gamma(R)}}_P}{(I^{[q]}:_{R}u^q)R_P^{\q^{\gamma(R)}}}$};
\node(5)[node distance=1.5in, right of=2]{$T_1(q_2)_P$};
\node(6)[node distance=1in, right of=5]{$0$};
\node(7)[node distance=1.5in, right of=4]{$T_1'(q_2)_P$};
\node(8)[node distance=1in, right of=7]{$0$};
\draw[->](1) to node [above]{}(2);
\draw[->](1) to node [left]{}(3);
\draw[->](3) to node [above]{}(4);
\draw[->](2) to node [right]{}(4);
\draw[->](2) to node [right]{}(5);
\draw[->](5) to node [right]{}(6);
\draw[->](5) to node [right]{}(7);
\draw[->](4) to node [right]{}(7);
\draw[->](7) to node [right]{}(8);
\end{tikzpicture}$$
$$\begin{tikzpicture}
\node(1){$R^{q_2^{\gamma(R)}}_P$};
\node(2)[node distance=2in, right of=1]{$R_P^{1/q_2}$};
\node(3)[node distance=1in, below of=1]{$\frac{R^{q^{\gamma(R)}}_P}{(I^{[q]}:_{R}u^q)R_P^{\q^{\gamma(R)}}}$};
\node(4)[node distance=2in, right of=3]{$\frac{R^{1/q_2}_P}{(I^{[q]}R^{1/q_2}_P:_{R^{1/q_2}_P}u^q)}$};
\node(5)[node distance=1.5in, right of=2]{$T_2(q_2)_P$};
\node(6)[node distance=1in, right of=5]{$0$};
\node(7)[node distance=1.5in, right of=4]{$T_2'(q_2)_P$};
\node(8)[node distance=1in, right of=7]{$0$};
\draw[->](1) to node [above]{}(2);
\draw[->](1) to node [left]{}(3);
\draw[->](3) to node [above]{}(4);
\draw[->](2) to node [right]{}(4);
\draw[->](2) to node [right]{}(5);
\draw[->](5) to node [right]{}(6);
\draw[->](5) to node [right]{}(7);
\draw[->](4) to node [right]{}(7);
\draw[->](7) to node [right]{}(8);
\end{tikzpicture}$$
Furthermore, $T_i'(q_2)_P$ will be the homomorphic image of $\frac{T_i(q_2)_P}{P^{[q]}T_i(q_2)_P}$ for $i=1,2$. It follows that 
\begin{eqnarray*}
\left|\lambda\left(\frac{J^{[q_1]}R_P}{I^{[q_1]}R_P}\right)q_2^{\gamma(R)}-\lambda\left(\frac{J^{[q_1]}R^{1/q_2}_P}{I^{[q_1]}R^{1/q_2}_P}\right)\right|\leq \max_{i=1,2}\left\{\lambda\left(\frac{T_i(q_2)_P}{P^{[q]}T_i(q_2)_P}\right)\right\}.
\end{eqnarray*}
For each $i=1,2$, $\lambda\left(\frac{T_i(q_2)_P}{P^{[q]}T_i(q_2)_P}\right)\leq Cq^{\gamma(R)}\max_{Q\in\mathcal{S}(R)}\{\left(\frac{R_P}{(Q+P^{[q]})R_P}\right)\}.$ We can now apply Proposition \ref{Main Lemma} to know that the desired bound exists.



\end{proof}

\section{Uniform Bounds in Rings Essentially of Finite Type Over an Excellent Local Ring}\label{Uniform Bounds 2}

The purpose of this section is to establish Proposition \ref{Main Lemma} and Theorem \ref{The main bound} for rings which are essentially of finite type over an excellent local ring. The following well known Lemma shall allow us to reduce our considerations to rings which are essentially of finite type over a complete local ring.

\begin{Lemma}\label{Regular fibers Lemma} Let $R\rightarrow S$ be a faithfully flat homomorphism of characteristic $p$ Noetherian rings with regular fibers. Let $M$ be a finitely generated $R$-module, $P\in \Spec(R)$ and $IR_P$ an $PR_P$-primary ideal. Then $$\frac{1}{q^{\heigh(P)}}\lambda_{R_P}\left(\frac{M_P}{I^{[q]}M_P}\right)=\frac{1}{q^{\heigh(Q)}}\lambda_{S_Q}\left(\frac{(S\otimes_RM)_Q}{(I,\underline{x})^{[q]}(S\otimes_RM)_Q}\right),$$ where $Q$ is a prime of $S$ lying over $P$ and $\underline{x}$ is a regular system of parameters for $S_Q/PS_Q.$

\end{Lemma}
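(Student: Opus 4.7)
The plan is to reduce to the local case and then establish a length‑multiplicativity identity across the flat extension. Localizing at $P$ on the $R$-side and at $Q$ on the $S$-side, one may assume $(R,P)\to(S,Q)$ is a flat local homomorphism whose closed fiber $S/PS$ is a regular local ring of dimension $d:=\height(Q)-\height(P)$ with $\underline{x}$ a regular system of parameters. Set $M':=M_P/I^{[q]}M_P$, which has finite length over $R_P$. Since $(S\otimes_R M)_Q\cong M_P\otimes_{R_P}S_Q$ and $(I,\underline{x})^{[q]}=I^{[q]}+\underline{x}^{[q]}$, the module appearing in the statement becomes $(M'\otimes_{R_P}S_Q)/\underline{x}^{[q]}(M'\otimes_{R_P}S_Q)$, so the lemma is equivalent to the identity
$$\lambda_{S_Q}\!\left(\frac{M'\otimes_{R_P}S_Q}{\underline{x}^{[q]}(M'\otimes_{R_P}S_Q)}\right)=q^d\cdot\lambda_{R_P}(M').$$

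I would prove this identity by induction on $\ell:=\lambda_{R_P}(M')$, the case $\ell=0$ being trivial. For the inductive step, fix a short exact sequence $0\to M''\to M'\to R_P/PR_P\to 0$; this remains exact after tensoring with the flat $R_P$-module $S_Q$, giving $0\to M''\otimes_{R_P}S_Q\to M'\otimes_{R_P}S_Q\to S_Q/PS_Q\to 0$. Applying $-\otimes_{S_Q}(S_Q/\underline{x}^{[q]}S_Q)$ yields a right-exact sequence whose left-hand exactness is controlled by $\Tor_1^{S_Q}(S_Q/\underline{x}^{[q]},\,S_Q/PS_Q)$. Showing this Tor vanishes is essentially the only obstacle.

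To this end I appeal to the standard lifting principle for flat local maps: since $\underline{x}^{[q]}$ is a system of parameters --- and hence a regular sequence --- in the regular local ring $S_Q/PS_Q$, flatness of $R_P\to S_Q$ forces $\underline{x}^{[q]}$ to be a regular sequence on $S_Q$ itself. The Koszul complex $K_\bullet(\underline{x}^{[q]};S_Q)$ is therefore a free $S_Q$-resolution of $S_Q/\underline{x}^{[q]}S_Q$, so
$$\Tor_1^{S_Q}(S_Q/\underline{x}^{[q]},\,S_Q/PS_Q)=H_1(K_\bullet(\underline{x}^{[q]};S_Q/PS_Q))=0,$$
using that $\underline{x}^{[q]}$ is a regular sequence on $S_Q/PS_Q$ as well. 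Additivity of length on the resulting short exact sequence, combined with the inductive hypothesis and the base-step computation $\lambda_{S_Q}((S_Q/PS_Q)/\underline{x}^{[q]}(S_Q/PS_Q))=q^d$ --- which is immediate because the products $x_1^{a_1}\cdots x_d^{a_d}$ with $0\le a_i<q$ form a residue-field basis of this quotient --- completes the induction. Dividing the displayed identity by $q^{\heigh(Q)}=q^{\heigh(P)+d}$ yields the lemma.
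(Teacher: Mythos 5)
Your proof is correct and follows essentially the same route as the paper: both identify the relevant module as $\frac{S_Q}{\underline{x}^{[q]}S_Q}\otimes_{R_P}\frac{M_P}{I^{[q]}M_P}$ and reduce to a length-multiplicativity formula across the flat local map $R_P\to S_Q$. The only difference is that the paper invokes this formula (that $\lambda_{S_Q}(N\otimes_{R_P}M')=\lambda_{S_Q}(N/PN)\,\lambda_{R_P}(M')$ for $N=S_Q/\underline{x}^{[q]}S_Q$) in a single line, whereas you unfold it via induction on $\lambda_{R_P}(M')$, supplying the Koszul/Tor argument that justifies the flatness of $S_Q/\underline{x}^{[q]}S_Q$ over $R_P$ underlying that formula; this makes the argument more self-contained but is not a genuinely distinct method.
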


\begin{proof} The first thing to observe is that $\frac{(S\otimes_R M)_Q}{(I,\underline{x})^{[q]}(S\otimes_R M)_Q}\simeq \frac{S_Q}{\underline{x}^{[q]}S_Q}\otimes_{R_P}\frac{M_P}{I^{[q]}M_P}$. Since $R_P\rightarrow S_Q$ is flat and $S_Q/PS_Q$ is regular, we have that
\begin{align*}
\lambda_{S_Q}\left(\frac{(S\otimes_R M)_Q}{(I,\underline{x})^{[q]}(S\otimes_R M)_Q}\right)=&\lambda_{S_Q}\left(\frac{S_Q}{\underline{x}^{[q]}S_Q}\otimes_{R_P}\frac{M_P}{I^{[q]}M_P}\right)\\\\
=&\lambda_{S_{Q}}\left(\frac{S_Q}{(P+\underline{x}^{[q]})S_Q}\right)\lambda_{R_{P}}\left(\frac{M_P}{I^{[q]}M_P}\right)\\\\
=& q^{\heigh(Q)-\heigh(P)}\lambda_{R_{P}}\left(\frac{M_P}{I^{[q]}M_P}\right).
\end{align*}
Dividing both sides of the equation by $q^{\heigh(Q)}$ gives the desired result.

\end{proof}

Suppose that $R$ is essentially of finite type over the excellent local ring $A$. Let $\hat{A}$ denote the completion of $A$ with respect to its maximal ideal. Then $R\rightarrow \hat{A}\otimes_A R$ is a faithfully flat homomorphism with regular fibers (\cite{Matsumura1980}, Section 33, Lemma 4). This observation and Lemma \ref{Regular fibers Lemma} allow us to reduce proving statements about rings essentially of finite type over an excellent local ring to rings which are essentially of finite type over a complete local ring.

\medskip

If $R$ is essentially of finite type over a complete local ring $A$, then let $\Lambda$ be a p-base of the residue field of $A$. We shall let $\Gamma$ be a cofinite subset of $\Lambda$. For each such $\Gamma$ there is an associated $R$-algebra, $R^{\Gamma}$, which satisfies the following.

\begin{Theorem}[\emph{\cite{HoHu1994}}, Section 6]\label{Gamma Construction} Let $R$ be a characteristic $p$ ring essentially of finite type over a complete local ring. Then for each $\Gamma\leq \Lambda$, $R^{\Gamma}$ is a faithfully flat, purely inseparable, F-finite $R$-algebra. 

\end{Theorem}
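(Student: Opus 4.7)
The plan is to construct $R^{\Gamma}$ explicitly and then verify each of the three claimed properties. First, since $R$ is essentially of finite type over a complete local ring $A$, I can write $R = W^{-1}(A[y_1,\ldots,y_m]/J)$ for a multiplicative set $W$ and an ideal $J$, and reduce the problem to constructing $A^{\Gamma}$ and proving the properties for $A \to A^{\Gamma}$; then I can simply define $R^{\Gamma} := W^{-1}(A^{\Gamma}[y_1,\ldots,y_m]/JA^{\Gamma}[y_1,\ldots,y_m])$, and flatness, purely inseparable, and F-finiteness will all transfer from $A \to A^{\Gamma}$ via base change of polynomial rings and localization.

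To construct $A^{\Gamma}$, work at the residue-field level first. Because $\Lambda$ is a $p$-base of the residue field $k$, I can form the purely inseparable extension $k^{\Gamma}$ by adjoining all $p^e$-th roots, $e \geq 1$, of the elements of $\Gamma$. Then $\Lambda \setminus \Gamma$ is a $p$-base for $k^{\Gamma}$, and since $\Gamma$ is cofinite this $p$-base is finite, which makes $k^{\Gamma}$ an F-finite field. By the Cohen structure theorem, write $A = V[[x_1,\ldots,x_n]]/I$, where $V$ is either $k$ (equicharacteristic case) or a Cohen ring of $k$ (mixed characteristic case). Lift $k^{\Gamma}$ to the corresponding coefficient ring $V^{\Gamma}$ and set
\[
A^{\Gamma} := V^{\Gamma}[[x_1,\ldots,x_n]]/IV^{\Gamma}[[x_1,\ldots,x_n]].
\]

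Now verify the three properties. \emph{Faithful flatness:} $k \to k^{\Gamma}$ is flat as a field extension, hence $V \to V^{\Gamma}$ is flat, and this flatness is preserved under completion in the $x_i$'s and under the quotient by $I$; since both source and target are local, flatness upgrades to faithful flatness. \emph{Pure inseparability:} every element of $k^{\Gamma}$ has a $p^e$-th power lying in $k$; this lifts to $V^{\Gamma}$ and propagates through the power-series and quotient constructions, so every element of $A^{\Gamma}$ has some $p^e$-th power in $A$. \emph{F-finiteness:} $k^{\Gamma}$ is F-finite with $p$-base $\Lambda \setminus \Gamma$, so $V^{\Gamma}$ is F-finite, and a Frobenius generating set for $V^{\Gamma}[[x_1,\ldots,x_n]]^{1/p}$ over $V^{\Gamma}[[x_1,\ldots,x_n]]$ is given by products of the $p$-th roots of the finite $p$-base, the $p$-th roots of the $x_i$, and (in mixed characteristic) the $p$-th root of the uniformizer; hence $A^{\Gamma}$ is F-finite.

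The main obstacle is verifying F-finiteness of $V^{\Gamma}[[x_1,\ldots,x_n]]$, since F-finiteness is not automatic under completion and requires a genuine argument that $(V^{\Gamma}[[x_1,\ldots,x_n]])^{1/p}$ is module-finite over $V^{\Gamma}[[x_1,\ldots,x_n]]$. The key input is precisely that the cofinite choice of $\Gamma$ forces $k^{\Gamma}$ to have a \emph{finite} $p$-base, which is exactly the reason the whole construction produces an F-finite algebra; everything else in the proof is routine bookkeeping across the Cohen structure theorem, base change, and localization.
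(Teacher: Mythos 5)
The paper states this theorem only as a citation to Hochster--Huneke and gives no argument, so the comparison is against the actual $\Gamma$-construction in \cite{HoHu1994}, Section 6. Your framing is partly on the right track: the reduction from $R$ essentially of finite type over $A$ to the complete local base $A$, and the observation that cofiniteness of $\Gamma$ forces the field $k^\Gamma := \bigcup_e k[\Gamma^{1/p^e}]$ to have the finite $p$-base $\Lambda\setminus\Gamma$ and hence be F-finite, are both correct. But the object you define at the complete local level, namely $A^\Gamma := V^\Gamma[[x_1,\ldots,x_n]]/I V^\Gamma[[x_1,\ldots,x_n]]$ with $V^\Gamma$ a coefficient ring for $k^\Gamma$, is \emph{not} purely inseparable over $A$ when $\Gamma$ is infinite --- and $\Gamma$ is infinite whenever $\Lambda$ is, which is the generic situation. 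Concretely, take $V=k$ (equicharacteristic), $I=0$, distinct $\gamma_1,\gamma_2,\ldots\in\Gamma$, and set $s = \sum_{i\geq 1}\gamma_i^{1/p^i}x_1^i \in V^\Gamma[[x_1,\ldots,x_n]]$. For every $e$, the coefficient of $x_1^{ip^e}$ in $s^{p^e}$ is $\gamma_i^{p^{e-i}} = \gamma_i^{1/p^{i-e}}$, which lies outside $k$ once $i>e$; so no Frobenius power of $s$ lands in $A = k[[x_1,\ldots,x_n]]$. Your sentence claiming that pure inseparability ``propagates through the power-series construction'' is exactly where the argument breaks: it holds coefficientwise with no uniform bound on $e$, and a power series has infinitely many coefficients.

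The idea you are missing is that Hochster and Huneke do \emph{not} first adjoin all roots to the coefficient ring and then form power series. Instead, for each $e$ they take a coefficient ring $V_e$ with residue field $k_e := k[\Gamma^{1/p^e}]$, set $T_e := V_e[[x_1,\ldots,x_n]]$, and define $T^\Gamma := \bigcup_e T_e$ as a \emph{directed union}; then $A^\Gamma := T^\Gamma/IT^\Gamma$. In this union every element sits inside a single $T_e$, so some $p^e$-th power lands in $T_0 = V[[x_1,\ldots,x_n]]$, and pure inseparability holds essentially by construction. Your $V^\Gamma[[x_1,\ldots,x_n]]$ is the $\m$-adic completion of $T^\Gamma$ and is strictly larger. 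This choice has consequences: since $T^\Gamma$ is not complete and not module-finite over $T$, one has to separately prove it is Noetherian, local, faithfully flat over $T$, excellent, and F-finite, and these verifications --- not the F-finiteness of the completed ring, which you flagged as the ``main obstacle'' --- constitute the actual content of \cite{HoHu1994}, Section 6. So the difficulty is misplaced: F-finiteness of $V^\Gamma[[x_1,\ldots,x_n]]$ is the standard fact that a complete local ring with F-finite residue field is F-finite, while pure inseparability and Noetherianity of the directed union are the points that require genuine work.
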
 

To say that $R\rightarrow R^{\Gamma}$ is purely inseparable is to say that for each $s\in R^{\Gamma}$, there exists an $n\in\N$ such that $s^{n}\in R$. From this it follows that the induced map $\Spec(R^{\Gamma})\rightarrow \Spec(R)$ is a homeomorphism. The inverse map sends a prime $P\in \Spec(R)$ to $\sqrt{PR^{\Gamma}}$. If $P\in\Spec(R)$ we shall let $P_\Gamma=\sqrt{PR^{\Gamma}}$. 

\medskip

If $R$ is essentially of finite type over a complete local ring, then for each $\Gamma$ we have that $PR^{\Gamma}_{P_{\Gamma}}$ is $P_\Gamma R^{\Gamma}_{P_{\Gamma}}$-primary. If $R$ is essentially of finite type over an excellent local ring $A$, then $\Gamma$ shall represent a cofinite subset of a p-base for a coefficient field of $\hat{A}$. If $R$ is essentially of finite type over a complete local ring and $M$ a finitely generated $R$-module, then we let $M^{\Gamma}=R^{\Gamma}\otimes_R M$.

\begin{Proposition}\label{Main Lemma Finite Type} Let $R$ be essentially of finite type over an excellent local ring and let M be a finitely generated $R$-module. There exists a constant $C>0$ such that for all $P\in\Spec(R)$ and $q=p^e$, if $IR_P\subseteq JR_P$ is a $PR_P$-primary pair of ideals, then $$\lambda\left(\frac{J^{[q]}M_P}{I^{[q]}M_P}\right)\leq Cq^{\dim(M_P)}\lambda\left(\frac{JR_P}{IR_P}\right).$$
\end{Proposition}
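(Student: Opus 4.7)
The plan is to chain two faithfully flat base changes to reduce to the F-finite case handled by Proposition \ref{Main Lemma}. I would argue first for rings essentially of finite type over a \emph{complete} local ring and then use $\hat A$ to handle the general excellent-local base.

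For the complete case, fix a cofinite $\Gamma$ in a $p$-base of the residue field of the complete local base, so that by Theorem \ref{Gamma Construction} the ring $R^{\Gamma}$ is F-finite and $R \to R^{\Gamma}$ is faithfully flat and purely inseparable, hence integral. In particular $\Spec(R^{\Gamma}) \to \Spec(R)$ is a homeomorphism and $PR^{\Gamma}_{P^{\Gamma}}$ is $P^{\Gamma}R^{\Gamma}_{P^{\Gamma}}$-primary for every $P$. Proposition \ref{Main Lemma} applied to $R^{\Gamma}$ and $M^{\Gamma} = R^{\Gamma} \otimes_R M$ yields a uniform constant $C'$. For any $PR_P$-primary pair $IR_P \subseteq JR_P$, the extension $IR^{\Gamma}_{P^{\Gamma}} \subseteq JR^{\Gamma}_{P^{\Gamma}}$ is $P^{\Gamma}R^{\Gamma}_{P^{\Gamma}}$-primary, and since $R_P \to R^{\Gamma}_{P^{\Gamma}}$ is a flat local map with Artinian fiber, tensoring any $R_P$-module annihilated by a power of $P$ multiplies its length by the fixed integer $\ell_P := \lambda_{R^{\Gamma}_{P^{\Gamma}}}(R^{\Gamma}_{P^{\Gamma}}/PR^{\Gamma}_{P^{\Gamma}})$. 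Applying this to both $J^{[q]}M_P / I^{[q]}M_P$ and $JR_P/IR_P$, the factor $\ell_P$ appears on both sides of the Proposition \ref{Main Lemma} bound for $R^{\Gamma}$ and cancels; combined with $\dim(M^{\Gamma}_{P^{\Gamma}}) = \dim(M_P)$ (the fiber being zero-dimensional), this recovers the desired inequality with constant $C'$.

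For the general case I would set $S = \hat A \otimes_A R$, so that $R \to S$ is faithfully flat with regular fibers and $S$ is essentially of finite type over the complete local ring $\hat A$. Given $P \in \Spec(R)$ and a pair $IR_P \subseteq JR_P$, pick $Q \in \Spec(S)$ over $P$ and a regular system of parameters $\underline{x}$ for $S_Q/PS_Q$. Applying Lemma \ref{Regular fibers Lemma} separately to $M_P/I^{[q]}M_P$ and $M_P/J^{[q]}M_P$ and subtracting produces
\[ q^{\heigh(Q) - \heigh(P)} \lambda_{R_P}\!\left(\frac{J^{[q]}M_P}{I^{[q]}M_P}\right) = \lambda_{S_Q}\!\left(\frac{(J,\underline{x})^{[q]}(S \otimes_R M)_Q}{(I,\underline{x})^{[q]}(S \otimes_R M)_Q}\right), \]
together with the $q=1$ identity $\lambda_{S_Q}((J,\underline{x})S_Q/(I,\underline{x})S_Q) = \lambda_{R_P}(JR_P/IR_P)$. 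Feeding the constant $C$ from the complete-local case applied to $S$ into the right-hand side, and using $\dim((S \otimes_R M)_Q) = \dim(M_P) + \heigh(Q) - \heigh(P)$, the $q^{\heigh(Q) - \heigh(P)}$ factors cancel and one is left with $\lambda_{R_P}(J^{[q]}M_P/I^{[q]}M_P) \leq C q^{\dim(M_P)} \lambda_{R_P}(JR_P/IR_P)$.

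The main obstacle is uniformity of the constant across $\Spec(R)$, but this is secured at every stage: Proposition \ref{Main Lemma} is uniform over $\Spec(R^{\Gamma})$, the homeomorphism $\Spec(R^{\Gamma}) \cong \Spec(R)$ transfers this uniformity, and Lemma \ref{Regular fibers Lemma} supplies the precise scaling $q^{\heigh(Q) - \heigh(P)}$ needed to cancel the dimension shift coming from introducing $\underline{x}$. The residual bookkeeping — preservation of the primary-pair condition under both base changes, the dimension formulas for flat local maps with zero-dimensional or regular fibers, and the degenerate $J = R$ case (in which the length identities above vanish consistently on both sides) — is routine.
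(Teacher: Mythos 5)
Your proof follows the paper's proof essentially verbatim in structure: reduce to the complete-local base via Lemma \ref{Regular fibers Lemma}, then use the $\Gamma$-construction and Proposition \ref{Main Lemma}, with the fixed Artinian-fiber length $\ell_P = \lambda(R^\Gamma_{P_\Gamma}/PR^\Gamma_{P_\Gamma})$ appearing on both sides and canceling. The only difference is that you spell out the bookkeeping (the subtraction trick to pass from $\lambda(M_P/I^{[q]}M_P)$ to $\lambda(J^{[q]}M_P/I^{[q]}M_P)$, and the dimension formulas for the flat base changes) that the paper compresses into ``by Lemma \ref{Regular fibers Lemma} and the remarks that follow.''
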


\begin{proof} By Lemma \ref{Regular fibers Lemma} and the remarks that follow, we may reduce to the scenario that $R$ is essentially of finite type over a complete local ring. Choose any $\Gamma$. Then for each $P\in \Spec(R)$ one sees that by tensoring a prime filtration of $\frac{J^{[q]}M_P}{I^{[q]}M_P}$ with $R^{\Gamma}_{P_{\Gamma}}$ that $$\lambda_{R^{\Gamma}_{P_{\Gamma}}}\left(\frac{J^{[q]}M^{\Gamma}_{P_\Gamma}}{I^{[q]}M^{\Gamma}_{P_\Gamma}}\right)=\lambda_{R_P}\left(\frac{J^{[q]}M_P}{I^{[q]}M_P}\right)\lambda_{R^{\Gamma}_{P_{\Gamma}}}(R^{\Gamma}_{P_{\Gamma}}/PR^{\Gamma}_{P_{\Gamma}}).$$
We can now apply Proposition \ref{Main Lemma} to the F-finite ring $R^{\Gamma}$ so that we know there exists a constant $C$ such that for all $P\in \Spec(R)$ and for all $q$,
\begin{eqnarray*}
\lambda_{R_P}\left(\frac{J^{[q]}M_P}{I^{[q]}M_P}\right)&=&\frac{\lambda_{R^{\Gamma}_{P_{\Gamma}}}\left(J^{[q]}M^{\Gamma}_{P_{\Gamma}}/I^{[q]}M^{\Gamma}_{P_{\Gamma}}\right)}{\lambda_{R^{\Gamma}_{P_{\Gamma}}}(R^{\Gamma}_{P_{\Gamma}}/PR^{\Gamma}_{P_{\Gamma}})}\\ \\
&\leq&\frac{Cq^{\heigh(P_{\Gamma})}\lambda_{R^{\Gamma}_{P_{\Gamma}}}\left(JR^{\Gamma}_{P_{\Gamma}}/IR^{\Gamma}_{P_{\Gamma}}\right)}{\lambda_{R^{\Gamma}_{P_{\Gamma}}}(R^{\Gamma}_{P_{\Gamma}}/PR^{\Gamma}_{P_{\Gamma}})}\\ \\
&=&Cq^{\heigh(P)}\lambda_{R_P}\left(\frac{JR_P}{IR_P}\right).
\end{eqnarray*}

\end{proof}

\begin{Theorem}\label{The main bound finite type} Let $R$ be essentially of finite type over an excellent local ring and let $M$ be a finitely generated $R$-module. There exists a constant $C$ such that for all $P\in \Spec(R)$, for all $q_1,q_2$, if $IR_P\subseteq JR_P$ is a $PR_P$-primary pair of ideals, then $$\left|\lambda\left(\frac{J^{[q_1]}M_P}{I^{[q_1]}M_P}\right)q_2^{\heigh(P)}-\lambda\left(\frac{J^{[q_1q_2]}M_P}{I^{[q_1q_2]}M_P}\right)\right|\leq C q_2^{\heigh(P)}q_1^{\heigh(P)-1}\lambda\left(\frac{JR_P}{IR_P}\right).$$
\end{Theorem}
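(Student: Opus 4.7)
The plan is to mimic the two-step reduction used to pass from Proposition \ref{Main Lemma} to Proposition \ref{Main Lemma Finite Type}: first replace the excellent local base by a complete one via Lemma \ref{Regular fibers Lemma}, and then pass to the F-finite, purely inseparable extension $R^{\Gamma}$ and invoke Theorem \ref{The main bound}.

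For the first step, let $A$ denote the excellent local base, $S=\hat{A}\otimes_A R$, and $N=S\otimes_R M$. Given $P\in\Spec(R)$, faithful flatness supplies a prime $Q\in\Spec(S)$ lying over $P$, and regularity of the fiber $S_Q/PS_Q$ supplies a regular system of parameters $\underline{x}$ for it. Lemma \ref{Regular fibers Lemma}, applied separately to $I$ and $J$ and then subtracted (the case $J=R$ being the original statement), yields
\[
\lambda_{S_Q}\!\left(\frac{(J,\underline{x})^{[q]}N_Q}{(I,\underline{x})^{[q]}N_Q}\right)=q^{\heigh(Q)-\heigh(P)}\,\lambda_{R_P}\!\left(\frac{J^{[q]}M_P}{I^{[q]}M_P}\right),
\]
together with $\lambda_{S_Q}((J,\underline{x})S_Q/(I,\underline{x})S_Q)=\lambda_{R_P}(JR_P/IR_P)$. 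A short calculation then shows that the $S$-side left-hand expression in the theorem, evaluated on the pair $((I,\underline{x}),(J,\underline{x}))$ at parameters $(q_1,q_2)$, equals $(q_1q_2)^{\heigh(Q)-\heigh(P)}$ times the $R$-side left-hand expression, while the $S$-side upper bound $Cq_2^{\heigh(Q)}q_1^{\heigh(Q)-1}\lambda_{S_Q}((J,\underline{x})S_Q/(I,\underline{x})S_Q)$ factors as $(q_1q_2)^{\heigh(Q)-\heigh(P)}$ times the $R$-side target $Cq_2^{\heigh(P)}q_1^{\heigh(P)-1}\lambda_{R_P}(JR_P/IR_P)$. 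Hence a bound on $S$ transfers to the required bound on $R$ with the same constant, and it suffices to treat the case that $R$ is essentially of finite type over a complete local ring.

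For the second step, fix a cofinite subset $\Gamma$ of a p-base of the residue field of the complete base and form the F-finite, purely inseparable, faithfully flat $R$-algebra $R^{\Gamma}$ of Theorem \ref{Gamma Construction}. Since $\Spec(R^{\Gamma})\to\Spec(R)$ is a homeomorphism, $\heigh(P_{\Gamma})=\heigh(P)$ and $\dim(M^{\Gamma}_{P_{\Gamma}})=\dim(M_P)$. Tensoring composition series with $R^{\Gamma}_{P_{\Gamma}}$, exactly as in the proof of Proposition \ref{Main Lemma Finite Type}, multiplies each relevant $R_P$-length by the single factor $c_P:=\lambda_{R^{\Gamma}_{P_{\Gamma}}}(R^{\Gamma}_{P_{\Gamma}}/PR^{\Gamma}_{P_{\Gamma}})$ to produce the corresponding $R^{\Gamma}_{P_{\Gamma}}$-length. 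Applying Theorem \ref{The main bound} to $R^{\Gamma}$, $M^{\Gamma}$, and the $P_{\Gamma}R^{\Gamma}_{P_{\Gamma}}$-primary pair $IR^{\Gamma}_{P_{\Gamma}}\subseteq JR^{\Gamma}_{P_{\Gamma}}$, and then dividing through by $c_P$ (which occurs on both sides), yields the stated bound on $R$ with a uniform constant. As in the proof of Theorem \ref{The main bound}, the preliminary reduction to $R/\Ann_R M$ identifies $\heigh(P)$ with $\dim(M_P)$ and places the inequality in the asserted form.

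The main obstacle is the bookkeeping in Step 1, where one must check that the height shift $\heigh(Q)-\heigh(P)$ propagates through the Frobenius brackets so that the powers of $q_1$ and $q_2$ on both sides cancel consistently, leaving exactly the exponents $\heigh(P)$ and $\heigh(P)-1$ on the right. No new estimate beyond Theorem \ref{The main bound} and the transfer lemmas already established is needed.
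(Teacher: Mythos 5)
Your proposal is correct and takes essentially the same approach as the paper: the paper also reduces to the complete base case via Lemma \ref{Regular fibers Lemma}, then passes to the F-finite $\Gamma$-construction and divides both sides of the estimate from Theorem \ref{The main bound} by $\lambda_{R^{\Gamma}_{P_{\Gamma}}}(R^{\Gamma}_{P_{\Gamma}}/PR^{\Gamma}_{P_{\Gamma}})$. Your write-up simply makes explicit the height-shift bookkeeping in the first reduction, which the paper leaves to the reader by referring back to the analogous steps in Proposition \ref{Main Lemma Finite Type}.
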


\begin{proof} The proof of this Theorem is identical to the proof of Proposition \ref{Main Lemma Finite Type}. Lemma \ref{Regular fibers Lemma} allows us to reduce to the scenario that $R$ is essentially of finite type over an excellent local ring. Pick a $\Gamma$ and let $C$ be as in Theorem \ref{The main bound} for the F-finite ring $R^\Gamma$, then 
\begin{align*}
&\left|\lambda_{R_P}\left(\frac{J^{[q_1]}M_P}{I^{[q_1]}M_P}\right)q_2^{\heigh(P)}-\lambda_{R_P}\left(\frac{J^{[q_1q_2]}M_P}{I^{[q_1q_2]}M_P}\right)\right|\\\\
&=\left. \left| \lambda_{R_{P_{\Gamma}}^{\Gamma}}\left(\frac{J^{[q_1]}M^{\Gamma}_{P_{\Gamma}}}{I^{[q_1]}(M^{\Gamma}_{P_{\Gamma}})}\right)q_2^{\heigh(P)}-\lambda_{R_{P_{\Gamma}}^{\Gamma}}\left(\frac{J^{[q_1q_2]}M^{\Gamma}_{P_{\Gamma}}}{I^{[q_1q_2]}M^{\Gamma}_{P_{\Gamma}}}\right)\right|\middle/  \lambda_{R_{P_{\Gamma}}^{\Gamma}}\left(\frac{R_{P_{\Gamma}}^{\Gamma}}{PR_{P_{\Gamma}}^{\Gamma}}\right)\right. \\\\ 
&\leq \frac{C q_2^{\heigh(P_\Gamma)}q_1^{\heigh(P_\Gamma)-1}\lambda_{R_{P_\Gamma}}\left(\frac{JR_{P_\Gamma}}{IR_{P_\Gamma}}\right)}{\lambda_{R_{P_{\Gamma}}^{\Gamma}}\left(\frac{R_{P_{\Gamma}}^{\Gamma}}{PR_{P_{\Gamma}}^{\Gamma}}\right)}= C q_2^{\heigh(P)}q_1^{\heigh(P)-1}\lambda_{R_P}\left(\frac{JR_P}{IR_P}\right).
\end{align*}

\end{proof}

\section{Uniform Convergence and Continuity Results}\label{Applications}

\begin{Theorem}\label{Main Theorem} Let $R$ be either F-finite or essentially of finite type over an excellent local ring and let $M$ be a finitely generated $R$-module. Let $I(-)$ be a map of primary ideals. The sequence of functions $\frac{\lambda_{q_1}(I(-))}{\lambda_1(I(-))}:\Supp(M)\rightarrow \R$, which sends a prime $P\in \Supp(M)$ to $\frac{\lambda(M_P/I(P)^{[q_1]}M_P)}{q_1^{\dim(M_{P}})\lambda(R_P/I(P)R_P)}$, converges uniformly to the scaled Hilbert-Kunz multiplicity function $\frac{\e(I(-), M_{-})}{\lambda_1(I(-))}$, which sends a prime $P\in\Supp(M)$ to $\frac{\e(I(P),M_{P})}{\lambda(R_P/I(P)R_P)}$ as $q_1\rightarrow \infty$.

\end{Theorem}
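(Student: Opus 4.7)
The approach is to deduce Theorem \ref{Main Theorem} as a direct consequence of Theorem \ref{The main bound} (in the F-finite case) and Theorem \ref{The main bound finite type} (in the essentially-of-finite-type case), by specializing the primary pair to $I = I(P) \subseteq R_P = J$ and then letting $q_2 \to \infty$. The choice $J = R_P$ is the key point: it yields $J^{[q]}M_P = M_P$ and $\lambda(JR_P/IR_P) = \lambda(R_P/I(P)R_P)$, so the inequality of the main bound directly controls the difference between the $q_1$-th and $q_1q_2$-th Hilbert-Kunz lengths of $I(P)$ on $M_P$ in a way that decays like $1/q_1$ after normalization.

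First I would replace $R$ by $R/\Ann_R M$ so that $\heigh(P) = \dim(M_P)$ for all $P \in \Supp(M)$; this reduction does not affect any quantity appearing in the statement. The main bound then produces a constant $C > 0$, independent of $P$, $q_1$, and $q_2$, such that
$$\left|\lambda\!\left(\frac{M_P}{I(P)^{[q_1]}M_P}\right) q_2^{\dim(M_P)} - \lambda\!\left(\frac{M_P}{I(P)^{[q_1 q_2]}M_P}\right)\right| \leq C\, q_2^{\dim(M_P)}\, q_1^{\dim(M_P)-1}\, \lambda\!\left(\frac{R_P}{I(P)R_P}\right).$$
Dividing through by $q_1^{\dim(M_P)} q_2^{\dim(M_P)} \lambda(R_P/I(P)R_P)$ rewrites this as
$$\left|\frac{\lambda_{q_1}(I(-))(P)}{\lambda_1(I(-))(P)} - \frac{\lambda\!\left(M_P/I(P)^{[q_1 q_2]}M_P\right)}{(q_1 q_2)^{\dim(M_P)}\, \lambda(R_P/I(P)R_P)}\right| \leq \frac{C}{q_1}.$$

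For each fixed $P$ and $q_1$, Lemma \ref{Multiplicity Lemma} applied in the local ring $R_P$ gives that the second term in the absolute value converges, as $q_2 \to \infty$, to $\e(I(P), M_P)/\lambda(R_P/I(P)R_P)$. Since the right-hand side of the displayed inequality does not depend on $q_2$, passing to the limit yields
$$\left|\frac{\lambda_{q_1}(I(-))(P)}{\lambda_1(I(-))(P)} - \frac{\e(I(P), M_P)}{\lambda(R_P/I(P)R_P)}\right| \leq \frac{C}{q_1},$$
and the independence of $C$ from $P$ makes this precisely uniform convergence on $\Supp(M)$ with explicit rate $O(1/q_1)$.

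All of the real work has been absorbed into the main bound itself: producing a uniform constant $C$ that simultaneously controls the comparison between Hilbert-Kunz length functions at different Frobenius powers across every prime of $R$ is exactly the content of Sections \ref{Uniform Bounds} and \ref{Uniform Bounds 2}, and ultimately rests on the global form of Dutta's Lemma in Section \ref{Preliminary}. Given that bound, no genuine obstacle remains; the present theorem is essentially a limit-passing argument with a uniformly dominated tail.
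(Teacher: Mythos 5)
Your proposal is correct and follows essentially the same route as the paper's own proof: replace $R$ by $R/\Ann_R M$, specialize Theorem \ref{The main bound} (or \ref{The main bound finite type}) to the primary pair $I(P)\subseteq R_P$, divide the resulting inequality through, and pass to the limit $q_2\to\infty$ via Lemma \ref{Multiplicity Lemma} to obtain a bound of size $C/q_1$ uniform in $P$. The only cosmetic difference is the order of dividing and taking the $q_2\to\infty$ limit; the content is identical.
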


\begin{proof} Given $\epsilon >0$, our goal is to show that there exists a $q'$ such that for all $P\in \Supp(M)$ and for all $q_1\geq q'$, $|\frac{1}{\lambda(R_P/I(P)R_P)}\lambda_{q_1}(I(P))-\frac{1}{\lambda(R_P/I(P)R_P)}\e(I(P),M_P)|<\epsilon$. After modding out $R$ by $\Ann_R M$, it follows by Theorems \ref{The main bound} and \ref{The main bound finite type} that there exists a constant $C>0$ such that, for all $P\in \Supp(M)$ and for all $q_1,q_2$, 
\begin{align*}
&\left|\lambda\left(\frac{M_P}{I(P)^{[q_1]}M_P}\right)q_2^{\dim(M_P)}-\lambda\left(\frac{M_P}{I(P)^{[q_1q_2]}M_P}\right)\right|\\\\
&\leq Cq_2^{\dim(M_P)}q_1^{\dim(M_P)-1}\lambda\left(\frac{R_P}{(I(P)+\Ann_R(M))R_P}\right)\\\\
&\leq Cq_2^{\dim(M_P)}q_1^{\dim(M_P)-1}\lambda\left(\frac{R_P}{I(P)R_P}\right).
\end{align*}

\medskip

Dividing both sides of the inequality by $q_2^{\dim(M_P)}$, letting $q_2\rightarrow \infty$, and applying Lemma \ref{Multiplicity Lemma}, gives that for all $P\in \Supp(M)$ and for all $q_1$, $$\left|\lambda\left(\frac{M_P}{I(P)^{[q_1]}M_P}\right)-q_1^{\dim(M_P)}\e(I(P), M_P)\right|\leq Cq_1^{\dim(M_P)-1}\lambda\left(\frac{R_P}{I(P)R_P}\right).$$ 

\medskip

Choose $q'$ large enough that $\frac{C}{q'}<\epsilon$ and let $q_1\geq q'$. Dividing the above inequality by $q_1^{\dim(M_P)}\lambda(R_P/I(P)R_P)$ gives that for all $P\in\Supp(M)$ and all $q_1$, $$\left|\frac{\lambda^M_{q_1}(I(P))}{\lambda(R_P/I(P)R_P)}-\frac{\e(I(P), M_P)}{\lambda(R_P/I(P)R_P)}\right|\leq \frac{C}{q_1}<\epsilon.$$

\end{proof}

Let $n\in \N$ and set $f_n(P)= \frac{1}{q_1^{\dim(M_{P})}}\lambda(M_P/I(P)^{[q_1]}M_P)$ where $q_1=p^n$ and let $f$ be the limit function $f(P)=\e(I(P), M_P)$. What Theorem \ref{Main Theorem} is saying is that there exists a strictly positive function $g:\Spec(R)\rightarrow \R$, namely $g(P)=\frac{1}{\lambda(R_P/I(P)R_P)}$, which does not depend on $n$, such that $gf_n$ converges uniformly to the function $gf$. If  there exists a $\delta>0$ such that for all $P\in \Spec(R)$ $g(P)\geq \delta$, then $f_n$ converges uniformly to $f$. To see this we can choose $n$ so large so that for all $|gf_n-gf|<\epsilon \delta$. Then $|f_n-f|<\epsilon\delta/g\leq \epsilon\delta/\delta=\epsilon$. Using this observation we obtain the following Corollary to Theorem \ref{Main Theorem}.

\begin{Corollary}\label{Corollary of Main Theorem} Let $R$ be an F-finite ring of prime characteristic $p>0$ and let $M$ be a finitely generated $R$-module. Let $I(-)$ be a map of primary ideals. Suppose that there exists a $q$ such that $P^{[q]}\subseteq I(P)$ for all $P\in\Supp(M)$, or more generally there exists a constant $D$ such that $\lambda(R_P/I(P)R_P)\leq D$ for all $P\in\Supp(M)$. Then the sequence of functions $\lambda_{q_1}(I(-)):\Supp(M)\rightarrow \R$, which sends a prime $P$ to $\frac{1}{q_1^{\dim(M_P)}}\lambda(M_P/I(P)^{[q_1]}M_P)$, converges uniformly to the Hilbert-Kunz multiplicity function $\e(I(-), M_{-})$, which sends a prime $P$ to $\e(I(P),M_{P})$.
\end{Corollary}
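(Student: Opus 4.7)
My plan is to derive the corollary directly from Theorem \ref{Main Theorem} via the elementary observation recorded in the paragraph preceding the statement. Theorem \ref{Main Theorem} asserts that $g \cdot \lambda_{q_1}^M(I(-))$ converges uniformly to $g \cdot \e(I(-), M_-)$ on $\Supp(M)$, where $g(P) = 1/\lambda(R_P/I(P)R_P)$. If I can produce a $\delta > 0$ with $g(P) \geq \delta$ for every $P \in \Supp(M)$, equivalently a uniform upper bound $\lambda(R_P/I(P)R_P) \leq 1/\delta$, then the computation $|f_n - f| \leq |g f_n - g f|/g \leq (\epsilon \delta)/\delta = \epsilon$ already noted in the paper upgrades the scaled uniform convergence to uniform convergence of $\lambda_{q_1}^M(I(-))$ to $\e(I(-), M_-)$ itself.

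Thus the entire task reduces to verifying the uniform upper bound on $P \mapsto \lambda(R_P/I(P)R_P)$ under either hypothesis. Under the ``more generally'' hypothesis, the constant $D$ is exactly this bound, so there is nothing to check. Under the first hypothesis, there is a fixed $q$ with $P^{[q]} \subseteq I(P)$ for every $P \in \Supp(M)$, so there is a surjection $R_P/P^{[q]}R_P \twoheadrightarrow R_P/I(P)R_P$, giving
\[
\lambda(R_P/I(P)R_P) \leq \lambda(R_P/P^{[q]}R_P).
\]
I would then apply Proposition \ref{Main Lemma} to the F-finite ring $R$ with module $R$ and pair $PR_P \subseteq R_P$ (i.e.\ $I = P$, $J = R$), which yields a constant $C$ independent of $P$ such that $\lambda(R_P/P^{[q]}R_P) \leq C q^{\height(P)}$. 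Since $R$ is a Noetherian F-finite ring, it has finite Krull dimension, so $\height(P) \leq \dim R$, and the uniform bound $\lambda(R_P/I(P)R_P) \leq C q^{\dim R}$ follows. In other words, the first hypothesis always reduces to the second with $D = C q^{\dim R}$.

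There is no substantial obstacle here beyond assembling these observations; the real work is already encapsulated in Theorem \ref{Main Theorem} and Proposition \ref{Main Lemma}. The only minor point worth explicit mention is the appeal to finite Krull dimension of F-finite Noetherian rings, which is standard and is precisely what lets me convert the height-dependent bound $C q^{\height(P)}$ into a $P$-independent one. Once the uniform bound on $\lambda(R_P/I(P)R_P)$ is in hand, I choose $n$ large enough to force $|g f_n - g f| < \epsilon \delta$ uniformly on $\Supp(M)$ using Theorem \ref{Main Theorem}, and the final estimate above completes the proof.
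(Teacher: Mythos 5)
Your argument is correct and follows the paper's own proof essentially verbatim: invoke the observation preceding the corollary to reduce to a uniform upper bound on $\lambda(R_P/I(P)R_P)$, dispose of the "more generally" case trivially, and in the first case apply Proposition \ref{Main Lemma} (with $J = R$, $I = P$) together with finiteness of $\dim R$ to get $\lambda(R_P/I(P)R_P) \leq \lambda(R_P/P^{[q]}R_P) \leq Cq^{\height(P)} \leq Cq^{\dim R}$. No gaps, and nothing meaningfully different from what the paper does.
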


\begin{proof} By the above remarks we only need to find $\delta>0$ such that for all $P\in \Supp(M)$, $\frac{1}{\lambda(R_P/I(P)R_P)}\geq \delta$, or equivalently that there exists a $D$ such that for all $P\in \Supp(M)$, $\lambda(R_P/I(P)R_P)\leq D$. We are assuming that for each $P\in\Spec(R)$ that $P^{[q]}\subseteq I(P)$. Hence by Lemma \ref{Main Lemma} there exists a constant $C$ such that for all $P\in\Supp(M)$, $\lambda(R_P/I(P)R_P)\leq \lambda(R_P/P^{[q]}R_P)\leq Cq^{\heigh(P)}\leq Cq^{\dim(R)}$. Therefore $D=Cq^{\dim(R)}$ works.

\end{proof}

Corollary \ref{Corollary of Main Theorem} gives an alternative proof of Smirnov's result that if $R$ is F-finite or essentially of finite type over an excellent local ring, then $\e(-)$ is upper semi-continuous at primes $P$ such that $R_P$ is equidimensional.

\begin{Corollary}\label{Corollary 1} Let $R$ be either F-finite or essentially of finite type over an excellent local ring, then the Hilbert-Kunz function $\e(-):\Spec(R)\rightarrow \R_{\geq 1}$ which sends a prime $P\mapsto \e(R_P)$ is upper semi-continuous at all $P\in \Spec(R)$ such that $R_P$ is equidimensional.

\end{Corollary}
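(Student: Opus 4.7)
The plan is to realize the Hilbert--Kunz multiplicity function as a uniform limit of upper semi-continuous functions; upper semi-continuity of the limit then follows from a routine three-epsilon argument.

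First, I would apply Corollary \ref{Corollary of Main Theorem} with $M = R$ and $I(P) = P$. The hypothesis is trivially met: $P^{[1]} = P \subseteq I(P)$, and equivalently $\lambda(R_P/I(P)R_P) = 1$ is bounded uniformly by $D=1$. The conclusion is that the functions
\[ f_q : \Spec(R) \to \R, \qquad f_q(P) := \frac{1}{q^{\height(P)}}\lambda\!\left(\frac{R_P}{P^{[q]}R_P}\right), \]
converge uniformly on all of $\Spec(R)$ to $P \mapsto \e(R_P)$.

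Next, I would argue that for each fixed $q$ the function $f_q$ is upper semi-continuous at every $P_0$ for which $R_{P_0}$ is equidimensional. This is the substantive step. The idea is to lift a system of parameters of $R_{P_0}$ to elements $x_1,\dots,x_d \in P_0$ (where $d=\height(P_0)$), shrink to an open neighborhood $U \ni P_0$ on which equidimensionality forces the height function to behave well along $V(x) \cap U$ and $(x)$ still cuts out a parameter ideal at primes specializing $P_0$, and then compare $\lambda(R_P/P^{[q]}R_P)$ with $\lambda(R_P/(x)^{[q]}R_P) = q^d \lambda(R_P/(x)R_P)$ up to an error of the order $q^{d-1}$, the latter being controlled by the uniform bound from Proposition \ref{Main Lemma}. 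Upper semi-continuity of $f_q$ then reduces to the classical upper semi-continuity of $P \mapsto \lambda(R_P/(x)R_P)$ for the fixed parameter ideal $(x)$.

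Finally, a standard three-epsilon argument finishes the proof. Given $\epsilon > 0$, choose $q$ with $|f_q(P) - \e(R_P)| < \epsilon/3$ for every $P \in \Spec(R)$, then choose an open $U \ni P_0$ on which $f_q(P) < f_q(P_0) + \epsilon/3$. For $P \in U$,
\[ \e(R_P) \le f_q(P) + \frac{\epsilon}{3} < f_q(P_0) + \frac{2\epsilon}{3} \le \e(R_{P_0}) + \epsilon, \]
which is the desired upper semi-continuity of $\e(R_-)$ at $P_0$.

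The main obstacle is the middle step, namely the upper semi-continuity of each $f_q$ at equidimensional primes. Without equidimensionality the height function can drop discontinuously under generization, and the normalizing factor $q^{\height(P)}$ combined with the varying ideal $P^{[q]}$ prevents a naive comparison; the equidimensionality of $R_{P_0}$ is precisely what allows a well-behaved system of parameters to be chosen and propagated to a neighborhood, and is therefore used in an essential way.
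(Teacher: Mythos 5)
Your overall skeleton matches the paper's: apply Corollary \ref{Corollary of Main Theorem} to get uniform convergence $f_q \to \e(-)$, establish upper semi-continuity of each $f_q$, then close with a three-epsilon argument. Steps one and three are correct and essentially what the paper does. The problem is in the middle step, which is where the paper simply invokes Kunz's theorem (\cite{Kunz1976}) that $P \mapsto \frac{1}{q^{\height(P)}}\lambda(R_P/P^{[q]}R_P)$ is upper semi-continuous on locally equidimensional rings, together with the observation that catenary $R$ with $R_{P_0}$ equidimensional admits an $s \notin P_0$ with $R_s$ locally equidimensional. Your attempt to reprove this from scratch by comparison with a lifted system of parameters $(x)$ has concrete errors.

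First, the displayed equality $\lambda(R_P/(x)^{[q]}R_P) = q^d \lambda(R_P/(x)R_P)$ is false in general; it holds when $x_1,\dots,x_d$ form a regular sequence (e.g.\ when $R_P$ is Cohen--Macaulay), but for an arbitrary system of parameters in a non-Cohen--Macaulay local ring the length of $R_P/(x)^{[q]}R_P$ does not scale by $q^d$ (only the Hilbert--Samuel multiplicity $e((x)^{[q]}) = q^d e((x))$ does). Second, even where it holds, your claimed error bound of order $q^{d-1}$ does not follow from Proposition \ref{Main Lemma}: applied to the primary pair $(x)R_P \subseteq PR_P$, that proposition bounds $\lambda(P^{[q]}R_P/(x)^{[q]}R_P)$ by $Cq^{d}\lambda(PR_P/(x)R_P)$, which is $O(q^d)$, not $O(q^{d-1})$, so it does not isolate the leading term. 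Third, the phrase "primes specializing $P_0$" misidentifies the relevant locus: to verify upper semi-continuity at $P_0$ you must control $f_q(P)$ for \emph{all} $P$ in some basic open set $D(s) \ni P_0$, which includes many primes not in $V(P_0)$, and these can have heights strictly less than $d$; the fixed ideal $(x)$ will not be $PR_P$-primary at such primes, so the comparison doesn't even make sense there. The cleanest fix is to do what the paper does: cite Kunz for upper semi-continuity of each $f_q$ on a locally equidimensional ring and use catenaricity to pass from equidimensionality of $R_{P_0}$ to local equidimensionality of $R_s$ for some $s \notin P_0$.
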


\begin{proof} Consider the map of primary ideals $I(-)$ which sends a prime $P$ to $P$. Then $P^{[1]}=P\subseteq I(P)$ for each $P\in \Spec(R)$. Corollary \ref{Corollary of Main Theorem} says that $\lambda_{q_1}(-)$ converges uniformly to $\e(-)$. E. Kunz originally showed in \cite{Kunz1976} that for each $q_1$ the function $\lambda_{q_1}(-)$ which sends a prime $P\mapsto \frac{1}{q^{\heigh(P)}}\lambda(R_P/P^{[q]}R_P)$, is upper semi-continuous on all rings which are locally equidimensional. If $R_P$ is equidimensional, then $R$ being catenary implies that there is an $s\in R-P$ such that $R_s$ is locally equidimensional. The $s$ which works is $1$ if $\min(R_P)=\min (R)$. If $\min(R_P)\subsetneq \min(R)$, then just choose $s\in \cap_{Q\in Min(R)-\min(R_P)}Q\setminus P$. Therefore, if $R_P$ is equidimensional, then in an open neighborhood of $P$, $\e(-)$ is the uniform limit of upper semi-continuous functions, hence $\e(-)$ is upper semi-continuous as well.

\end{proof}

\begin{Lemma}\label{F-signature 6} Let $(R,\m,k)$ be an excellent reduced local ring of dimension $d$. Let $q_1,q_2$ equal $p^{e_1}$ and $p^{e_2}$ respectively and $b_{q_1}=q_1^ds_{q_1}, b_{q_1q_2}=(q_1q_2)^ds_{q_1q_2}$, where $s_{q_1}$ and $s_{q_1q_2}$ are the $q_1$th and $q_1q_2$th normalized Frobenius splitting numbers of $R$ respectively. Then there is an irreducible $\m$-primary ideal $I$ and $u\in (I: \m)$ such that $b_{q_1}=\lambda((I,u)^{[q_1]}/I^{[q_1]})$ and $b_{q_1q_2}= \lambda((I,u)^{[q_1q_2]}/I^{[q_1q_2]})$. 

\end{Lemma}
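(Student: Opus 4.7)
The strategy is to invoke the colon-ideal characterization of the Frobenius splitting numbers due to Yao (and used by Tucker in his proof of the existence of the F-signature). In its strongest form, this characterization says: for each $q = p^e$ there is an integer $N(q)$ such that whenever $I$ is an irreducible $\m$-primary ideal with $I\subseteq \m^{N(q)}$ and $u\in(I:\m)\setminus I$ (determined up to a unit modulo $I$, since irreducibility of $I$ forces $(I:\m)/I$ to be a one-dimensional $k$-vector space), one has
\[
b_q \;=\; \lambda\bigl((I,u)^{[q]}/I^{[q]}\bigr).
\]
Equivalently, through the isomorphism $R/(I^{[q]}:u^q)\xrightarrow{\,\cdot\, u^q}(I,u)^{[q]}/I^{[q]}$, this length also equals $\lambda(R/(I^{[q]}:u^q))$.

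Given this characterization, the lemma reduces to finding a single pair $(I,u)$ that is ``deep enough'' simultaneously for $q_1$ and for $q_1 q_2$. Set $N=\max\{N(q_1),\,N(q_1 q_2)\}$. We need to exhibit an irreducible $\m$-primary ideal with $I\subseteq\m^N$. Such $I$ exists via Matlis duality applied to the $\m$-adic completion $\hat R$: the injective hull $E_{\hat R}(k)$ is the directed union of its finite-length indecomposable submodules (essential extensions of the one-dimensional socle), whose annihilators form a descending chain of irreducible $\m\hat R$-primary ideals with intersection zero. Some link of this chain lies inside $\m^N\hat R$, and contracting back through the faithfully flat map $R\to\hat R$ produces the desired irreducible $I\subseteq \m^N$ in $R$. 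Pick any $u\in(I:\m)\setminus I$ and apply the characterization once with $q=q_1$ and once with $q=q_1 q_2$ using this common $(I,u)$; both equalities follow.

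\textbf{Main Obstacle.} The crux is the colon-ideal characterization itself. In the F-finite case it is verified by decomposing $F^q_*R\cong R^{a_q}\oplus M_q$ and checking that, once $I$ is chosen so deep that $M_q$ contributes no extra copies of the residue field to $\Hom_R(F^q_*R,R/I)$ beyond what can be absorbed by the $q^{\alpha(R)}$ normalization, the free summand contributes exactly $a_q/q^{\alpha(R)} = b_q$ to $\lambda((I,u)^{[q]}/I^{[q]})$; the general (non-F-finite) case is taken care of by Yao's intrinsic definition of $b_q$, which is built precisely so that this formula holds. Everything after that is soft: the key monotonicity is that the formula persists for any irreducible $\m$-primary $I$ contained more deeply than $\m^{N(q)}$, which is exactly what permits a single pair $(I,u)$ to serve both $q_1$ and $q_1 q_2$ at once.
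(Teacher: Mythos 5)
Your proposal follows essentially the same route as the paper: invoke Yao's colon-ideal description of $b_q$, produce a descending chain of irreducible $\m$-primary ideals cofinal with the powers of $\m$, and pick one link deep enough to serve both $q_1$ and $q_1q_2$ simultaneously. Two differences are worth noting. First, the paper obtains the chain by citing Hochster's theorem that a reduced excellent local ring is approximately Gorenstein, which is exactly where the ``reduced'' and ``excellent'' hypotheses enter; you instead build the chain by hand from finite-length submodules of $E_{\hat R}(k)$ and then contract along $R\to\hat R$. That works, but the step ``some link of this chain lies inside $\m^N\hat R$'' is not automatic from $\bigcap J_i = 0$; it needs Chevalley's intersection theorem for complete local rings, which you use implicitly without naming. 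Second, you assert a stronger, uniform version of Yao's characterization, namely that there is a single $N(q)$ such that \emph{every} irreducible $\m$-primary $I\subseteq\m^{N(q)}$ computes $b_q$, whereas the paper only claims the stabilization $I_e=(I_t^{[q]}:u_t^q)$ along the fixed chain for $t$ large. Your stronger statement is in fact true: if $J\subseteq I_t$ is irreducible with socle generator $u_J$, then the embedding $R/I_t\hookrightarrow R/J$ inside $E$ is multiplication by some $c$ with $cu_t\equiv u_J\pmod J$, and since $(a+b)^q=a^q+b^q$ in characteristic $p$ one gets $(I_t^{[q]}:u_t^q)\subseteq(J^{[q]}:u_J^q)$, yielding the uniformity — but this Frobenius argument is not supplied in your proposal and is not immediate, so as written this step is a gap. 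The paper's formulation sidesteps it entirely by staying inside one chain. With either of these two small points filled in, your argument is correct and equivalent to the paper's.
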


\begin{proof} Let $I_e=\{r\in R \mid \F r\otimes u=0\mbox{ in }\F R\otimes_R E_R(k)\}$ where $u$ generates the socle of $E_R(k)$. Then $b_q=\lambda(R/I_e)$, (\cite{Yao2006}, Remark 2.3). Since $R$ is reduced and excellent, $R$ is approximately Gorenstein (\cite{Hochster1977}, Theorem 1.7).  So there exists a descending chain of irreducible $\m$-primary ideals $\{I_t\}_{t\in \N}$ which is cofinal with $\{\m^t\}_{t\in \N}$. Let $u_t$ generate the socle mod $I_t$. Then $I_e=\cup_{t=1}^{\infty}(I_t^{[q]}:u_t^q)$, therefore for each $q$ there is a $t_0$ such that for all $t\geq t_0$, $b_q=\lambda(R/(I_t^{[q]}:u_t^q))=\lambda((I_t,u_t)^{[q]}/I_t^{[q]}).$

\end{proof}

\begin{Theorem}\label{F-signature 7} Let $R$ be either F-finite or essentially of finite type over an excellent local ring. There exists a constant $C$ such that, for all $P\in \Spec(R)$, and for all $q_1,q_2$, $$|b_{q_1}(P)q_2^{\heigh(P)}-b_{q_1q_2}(P)|\leq Cq_2^{\heigh(P)}q_1^{\heigh(P)-1}.$$
\end{Theorem}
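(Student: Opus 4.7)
The plan is to realize both Frobenius splitting numbers $b_{q_1}(P)$ and $b_{q_1q_2}(P)$ as length differences of a single $PR_P$-primary pair of ideals and then invoke the uniform bound from Theorem \ref{The main bound} (or Theorem \ref{The main bound finite type} in the essentially of finite type case), applied to $M=R$.

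First I would fix $P\in\Spec(R)$ and pass to $R_P$. Under the hypotheses $R_P$ is excellent, and since the Frobenius splitting numbers see only the reduced quotient (via Yao's description $b_q=\lambda(R/I_e)$), we may assume $R_P$ is reduced. Apply Lemma \ref{F-signature 6} to the excellent reduced local ring $R_P$ with exponents $q_1$ and $q_1q_2$: this produces an irreducible $PR_P$-primary ideal $I\subseteq R_P$ and an element $u\in(I:PR_P)$ such that simultaneously
\begin{align*}
b_{q_1}(P) &= \lambda\!\left(\frac{(I,u)^{[q_1]}R_P}{I^{[q_1]}R_P}\right),\\
b_{q_1q_2}(P) &= \lambda\!\left(\frac{(I,u)^{[q_1q_2]}R_P}{I^{[q_1q_2]}R_P}\right).
\end{align*}
Because $I$ is irreducible and $u\in(I:PR_P)\setminus I$, the socle quotient satisfies $\lambda((I,u)R_P/IR_P)=1$.

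Next, I would lift $I$ and $(I,u)$ to ideals of $R$ whose extensions to $R_P$ give back this $PR_P$-primary pair, and apply Theorem \ref{The main bound} (respectively Theorem \ref{The main bound finite type}) with $M=R$, for which $\dim(M_P)=\height(P)$. This yields a single constant $C$, independent of $P$, $q_1$, $q_2$, and of the choice of pair, such that
\begin{align*}
&\left|\lambda\!\left(\frac{(I,u)^{[q_1]}R_P}{I^{[q_1]}R_P}\right)q_2^{\height(P)}-\lambda\!\left(\frac{(I,u)^{[q_1q_2]}R_P}{I^{[q_1q_2]}R_P}\right)\right|\\
&\qquad\leq Cq_2^{\height(P)}q_1^{\height(P)-1}\lambda\!\left(\frac{(I,u)R_P}{IR_P}\right).
\end{align*}
Substituting the two identities furnished by Lemma \ref{F-signature 6} on the left-hand side and $\lambda((I,u)R_P/IR_P)=1$ on the right produces exactly the desired inequality $|b_{q_1}(P)q_2^{\height(P)}-b_{q_1q_2}(P)|\leq Cq_2^{\height(P)}q_1^{\height(P)-1}$.

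The only subtle point is the setup: one must invoke Lemma \ref{F-signature 6} so that a \emph{single} pair $I\subseteq(I,u)$ realizes the Frobenius splitting numbers at both exponents $q_1$ and $q_1q_2$ simultaneously; otherwise one would only have isolated identities at each level rather than a genuine comparison across the jump from $q_1$ to $q_1q_2$. Once such a pair is fixed, all the work is done by the uniformity of the constant $C$ in the bounds developed in Sections \ref{Uniform Bounds} and \ref{Uniform Bounds 2}, which is precisely what guarantees the estimate is independent of $P$.
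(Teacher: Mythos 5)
Your proposal is correct and takes essentially the same approach as the paper: use Lemma \ref{F-signature 6} to realize $b_{q_1}(P)$ and $b_{q_1q_2}(P)$ as lengths coming from a single $PR_P$-primary pair with socle length one, then invoke Theorem \ref{The main bound} or \ref{The main bound finite type}. The paper inserts an extra quasi-compactness step to reduce to the case $R$ itself reduced, but since Theorem \ref{The main bound} applies to arbitrary (not necessarily reduced) $R$ and the non-reduced local case is trivial because $b_q(P)=0$ there, your more direct organization is fine; one small caveat is that your phrase ``see only the reduced quotient'' is not literally how the trivial case goes --- the correct observation is simply that $b_q(P)>0$ forces $R_P$ to be F-split, hence reduced.
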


\begin{proof} It is well known that if $b_q(P)>0$ for some, equivalently for all, $q$, then $R_P$ is a reduced ring. Therefore $C=0$ is a constant which works for all $P\in \Spec(R)$ such that $R_P$ is not reduced. If $R_P$ is reduced there exists an $s\in R-P$ such that $R_s$ is reduced. Therefore by quasi-compactness of $\Spec(R)$, we may reduce our considerations to when $R$ is a reduced ring. The Theorem now follows by Lemma \ref{F-signature 6}, Theorem \ref{The main bound}, and Theorem \ref{The main bound finite type}.

\end{proof}

\begin{Theorem}\label{F-signature 8} Let $R$ be either F-finite or essentially of finite type over an excellent local ring. The $q$th normalized Frobenius splitting number function, which maps a prime $P\mapsto s_q(P)$, converges uniformly to the F-signature function, which maps a prime $P\mapsto s(R_P)$ as $q\rightarrow \infty.$

\end{Theorem}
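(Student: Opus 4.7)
The plan is to read off uniform convergence of $s_q$ to $s$ essentially for free from Theorem \ref{F-signature 7}, whose statement has been engineered precisely for this purpose. The point is that the right-hand side of the bound in Theorem \ref{F-signature 7} carries a factor of $q_2^{\heigh(P)}$ matching one on the left, so after dividing by $(q_1 q_2)^{\heigh(P)}$ the $q_2$-dependence cancels completely and we are left with a bound in $q_1$ alone that is uniform in $P$.

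Concretely, I would proceed as follows. First, apply Theorem \ref{F-signature 7} to extract a constant $C > 0$, independent of $P \in \Spec(R)$ and of $q_1, q_2$, with
$$|b_{q_1}(P)\, q_2^{\heigh(P)} - b_{q_1 q_2}(P)| \le C\, q_2^{\heigh(P)}\, q_1^{\heigh(P)-1}.$$
Then divide both sides by $(q_1 q_2)^{\heigh(P)}$ and use the definition $s_q(P) = b_q(P)/q^{\heigh(P)}$ to obtain
$$|s_{q_1}(P) - s_{q_1 q_2}(P)| \le \frac{C}{q_1},$$
a bound that is uniform in $P$ and in $q_2$. Next, for each fixed $P$, Tucker's theorem that the F-signature of a local ring always exists gives the pointwise convergence $\lim_{q_2 \to \infty} s_{q_1 q_2}(P) = s(R_P)$. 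Passing to the limit on $q_2$ in the displayed inequality yields
$$|s_{q_1}(P) - s(R_P)| \le \frac{C}{q_1}$$
for every $P \in \Spec(R)$ and every $q_1$. Finally, given $\epsilon > 0$, choose $q' = p^{e'}$ with $C/q' < \epsilon$; then $|s_{q_1}(P) - s(R_P)| < \epsilon$ for all $q_1 \ge q'$ and all $P$, which is the desired uniform convergence.

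There is no genuine obstacle left at this stage: all of the difficulty has been absorbed into Theorem \ref{F-signature 7}, which in turn relied on the uniform bounds of Section \ref{Uniform Bounds} and Section \ref{Uniform Bounds 2} together with the approximately-Gorenstein reduction of Lemma \ref{F-signature 6}. The one point worth verifying carefully is that the constant $C$ produced by Theorem \ref{F-signature 7} really is independent of $\heigh(P)$, so that cancellation of $q_2^{\heigh(P)}$ in the rescaling is legitimate; the statement of that theorem guarantees this. The boundary case $\heigh(P) = 0$ (where $R_P$ is either a field, giving $s_q(P) \equiv 1$, or a non-reduced Artinian ring, giving $s_q(P) \equiv 0$) is subsumed into the same inequality and requires no separate argument.
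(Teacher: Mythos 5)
Your proposal is correct and follows essentially the same route as the paper: both derive uniform convergence by dividing the inequality of Theorem \ref{F-signature 7} through by the appropriate power of $q_2^{\heigh(P)}$, pass to the limit as $q_2 \to \infty$ using the existence of the F-signature, and then choose $q_1$ large to make $C/q_1 < \epsilon$. The only cosmetic difference is the order of normalization (you divide by $(q_1q_2)^{\heigh(P)}$ in one step; the paper divides by $q_2^{\heigh(P)}$ first and by $q_1^{\heigh(P)}$ later), which yields the identical bound.
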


\begin{proof} Let $\epsilon >0$, let $C$ be as in Theorem \ref{F-signature 7}, and choose $q$ so large that $\frac{C}{q}<\epsilon.$ Then for all $P\in \Spec(R)$ we have that $$|b_{q_1}(P)q_2^{\heigh(P)}-b_{q_1q_2}(P)|\leq Cq_2^{\heigh(P)}q_1^{\heigh(P)-1}.$$ Therefore $$\left|b_{q_1}(P)-q_1^{\heigh(P)}\frac{b_{q_1q_2}(P)}{(q_1q_2)^{\heigh(P)}}\right|\leq Cq_1^{\heigh(P)-1}.$$ Letting $q_2\rightarrow \infty$ we have that for all $P\in \Spec(R)$ that $$|b_{q_1}(P)-q_1^{\heigh(P)}s(R_P)|\leq Cq_1^{\heigh(P)-1}.$$ Hence for all $q_1\geq q$ and all $P\in \Spec(R)$, $$\left|\frac{b_{q_1}(P)}{q_1^{\heigh(P)}}-s(R_P)\right|\leq \frac{C}{q_1}\leq \frac{C}{q}<\epsilon.$$ This verifies that $\frac{b_{q}(P)}{q^{\heigh(P)}}=s_q(P)$ converges uniformly to $s(R_P)$ as $q\rightarrow \infty.$

\end{proof}

\medskip

\begin{Theorem}\label{F-signature 9} Let $R$ be either F-finite or essentially of finite type over an excellent local ring. The F-signature function on $\Spec(R)$ is lower semi-continuous. 
\end{Theorem}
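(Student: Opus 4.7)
The plan is to deduce lower semi-continuity of $s(-)$ from the uniform convergence $s_q \to s$ established in Theorem \ref{F-signature 8}, combined with the lower semi-continuity of each individual $s_q$ proved by Enescu and Yao in \cite{EnescuYao2011}. The underlying principle is the standard fact that a uniform limit of lower semi-continuous functions on a topological space is itself lower semi-continuous; the work in this paper has already supplied the missing uniform convergence, so the remainder is formal.

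Concretely, fix $P \in \Spec(R)$ and $\epsilon > 0$. By Theorem \ref{F-signature 8}, choose a single $q = p^e$ with $|s_q(Q) - s(R_Q)| < \epsilon/3$ for every $Q \in \Spec(R)$. By Enescu and Yao's lower semi-continuity of $s_q$ applied at $P$, there is an open neighborhood $U$ of $P$ on which $s_q(Q) > s_q(P) - \epsilon/3$. Chaining the three estimates gives
$$s(R_Q) > s_q(Q) - \epsilon/3 > s_q(P) - 2\epsilon/3 > s(R_P) - \epsilon$$
for every $Q \in U$, which is exactly lower semi-continuity of $s$ at $P$. A brief aside handles primes $P$ at which $R_P$ is not reduced: there, nilpotents in $R_P$ annihilate $\F R_P$ once $e$ is large enough that the nilradical satisfies $\sqrt{0}^{[p^e]}=0$, so no free summand can exist and $s_q(P) = 0$ for all $q$, whence $s(R_P) = 0$; since $s \geq 0$ everywhere, lower semi-continuity at such $P$ is automatic.

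The main obstacle is securing Enescu and Yao's lower semi-continuity of $s_q$ at the full generality required by the statement, since their cleanest version is stated for domains while we allow an arbitrary F-finite or essentially-finite-type-over-excellent-local $R$. As indicated in the introduction, their techniques do cover this broader class; alternatively, one can reduce to the domain case by first passing to the open reduced locus of $\Spec(R)$ (open since the nilradical is a finitely generated ideal), then analyzing the finitely many irreducible components of $\Spec(R)$ one at a time. Once this input is in place, the uniform limit argument above is entirely routine and requires no further calculation.
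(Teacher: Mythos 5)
Your overall route — uniform limit of lower semi-continuous functions is lower semi-continuous, with the uniformity supplied by Theorem~\ref{F-signature 8} and the lower semi-continuity of each $s_q$ supplied by Enescu--Yao — is exactly the paper's argument, and your $\epsilon/3$ chaining is correct. (You also correctly cite Theorem~\ref{F-signature 8}; the paper's own text has a typo pointing to the wrong theorem number.)

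The one place where your write-up falls short is the reduction needed to make Enescu--Yao applicable, since their result requires the ring to be locally equidimensional (e.g.\ a domain) near the point in question. Your proposed fix of ``passing to the open reduced locus and then analyzing the finitely many irreducible components one at a time'' does not work as stated: if $P$ lies on more than one irreducible component, $R_P$ is not a domain, restricting to a single component changes the local ring, and there is no open neighborhood of $P$ in $\Spec(R)$ on which $R$ is a domain. The correct patch is the one the paper uses, and it is already latent in your remark that $s\geq 0$: if $s(R_P)=0$ then $s(R_P)-s(R_Q)\leq 0 < \epsilon$ for every $Q$, so lower semi-continuity at $P$ is automatic with no input at all (this subsumes your non-reduced aside). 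If instead $s(R_P)>0$, then by Aberbach--Leuschke together with Tucker's existence theorem, $R_P$ is strongly F-regular and hence a domain; since $R$ is Noetherian with finitely many minimal primes, one can then find $u\in R\setminus P$ with $R_u$ a domain, and Enescu--Yao applies on the open neighborhood $D(u)$. With that dichotomy in place of the irreducible-components heuristic, your argument is complete and agrees with the paper's.
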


\begin{proof} Let $\epsilon>0$ and let $P\in \Spec(R)$. If $s(R_P)=0$ then it is the case that for all $Q\in\Spec(R)$, that $s(R_P)-s(R_Q)\leq 0<\epsilon.$ Now suppose that $s(R_P)>0$. Aberbach and Leuschke showed in \cite{AberbachLeuschke2003}, along with Tucker's proof of the existence of $s(R_P)$ in \cite{Tucker2012}, that $s(R_P)>0$ if and only if $R_P$ is strongly F-regular. In particular we have that $R_P$ is a domain. There then exists an $s\in R-P$ such that $R_s$ is a domain. Enescu and Yao showed in \cite{EnescuYao2011} that if $S$ is a locally equidimensional ring which is either F-finite or essentially of finite type over an excellent local ring, then the $q$th normalized Frobenius splitting number function is lower semi-continuous on $\Spec(S)$. By Theorem \ref{F-signature 6}, we have that in a neighborhood of $P$, the F-signature function is the uniform limit of lower semi-continuous functions, hence itself is lower semi-continuous at $P$.

\end{proof}

Observe that Theorem \ref{F-signature 8} applied to the maximal ideal of an excellent local ring $(R,\m, k)$ directly shows the sequence $\frac{b_q}{q^{\dim(R)}}$ is a Cauchy sequence.

\section{Lower Semi-Continuity of F-signature of Pairs}\label{Lower Semi-Continuity of F-signature of Pairs}

In this section all rings under consideration will be F-finite. We want to establish the lower semi-continuity of the F-signature of a pair $(R,\mathscr{D})$ where $\mathscr{D}$ is a Cartier algebra, see section 2 of \cite{BST2012} for a more in-depth look at the basic notions of a Cartier subalgebra. Our main tool will be Proposition \ref{Main Lemma} in order to establish a uniform convergence result and the desired lower semi-continuity.

\medskip

Let $\mathscr{C}_q:= \Hom_R(F^q_*R,R)$ and $\mathscr{C}^R=\bigoplus_{q=p^e, e\geq 0} \mathscr{C}_q$. If $\varphi\in \CC_q$ and $\psi\in \CC_{q'}$ then $\varphi\cdot\psi:= \varphi\circ F^q_*\psi\in \CC_{qq'}$ where $F^q_*\psi(F^{qq'}_*r):= F^q_* \psi(F^{q'}_*r)$. We call $\mathscr{C}^R$ the \emph{(total) Cartier algebra} of $R$. Note that with the multiplication defined on homogenous elements of $\CC^R$ makes $\CC^R$ a noncommutative $\mathbb{F}_p$-algebra. Even though the 0th graded piece of $\CC^R$ is $\CC_{p^0}=\CC_1=\Hom_R(R,R)\simeq R$, $R$ is not central in $\CC^R$, hence $\CC^R$ is not an $R$-algebra. We say that $\DD\subseteq \CC^R$ is a \emph{Cartier subalgebra} of $R$ if $\DD$ is a $\mathbb{F}_p$-subalgebra of $\CC^R$ and $\DD_1=\CC_1\simeq R$.

\medskip

Suppose that $(R,\m,k)$ is a local ring and $\DD$ a Cartier subalgebra of $R$. Suppose that $F^q_*R\simeq \bigoplus M_i$ as an $R$-module. The summand $M_i$ is called a \emph{$\DD$-summand} if $M_i\simeq R$ and the projection $F^q_*R\rightarrow M_j\simeq R$ is an element of $\DD_q$. The \emph{qth F-splitting number of $(R,\DD)$} is the maximal number $a_q^\DD$ of $\DD$-summands appearing in the various direct sum decompositions of $F^q_*R$. Observe that $a_q^\CC=a_q$ for all $q$, the usual $q$th F-splitting number of $R$. For each $q=p^e$ let $I_q^\DD=\{r\in R\mid \varphi(F^q_*r)\in\m\mbox{ for all }\varphi\in \DD_q\}.$ The following Lemma is a list of basic properties about the sets $I_q^{\DD}$ which can all be found in Section 3 of \cite{BST2012}.

\medskip


\begin{Lemma}\label{BSTLemma} Let $(R,\m,k)$ be a local F-finite ring and $\DD$ a Cartier subalgebra and let $q,q_1,q_2$ be various powers of $p$ and $\varphi\in \DD_{q_1}$. Then 
\begin{enumerate}
\item $I_q^\DD\subseteq R$ is an ideal,
\item $\m^{[q]}\subseteq I_q^\DD$,
\item $\varphi(F^{q_1}_*I^\DD_{q_1q_2})\subseteq I^\DD_{q_2}$,
\item $\lambda(R/I_q^\DD)=\frac{a_q^\DD}{q^{\alpha(R)}}$.
\end{enumerate}

\end{Lemma}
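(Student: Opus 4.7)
Parts (1), (2), (3) are essentially direct verifications using the algebra structure on $\CC^R$, while (4) is the substantive statement. For (1), addition is immediate from $R$-linearity of each $\varphi$. For closure under multiplication by $a\in R$: since $\DD_1 \cong R$ sits inside $\DD$ and $\DD$ is closed under the multiplication on $\CC^R$, the map $\mu_a \in \DD_1$ (multiplication by $a$) gives $\varphi \cdot \mu_a \in \DD_q$ for any $\varphi \in \DD_q$. Unwinding the definition of the product, $(\varphi \cdot \mu_a)(F^q_* r) = \varphi(F^q_*(ar))$. If $r \in I_q^\DD$, this is in $\m$, giving $ar \in I_q^\DD$. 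For (2): for $m \in \m$, note that under the restriction-of-scalars $R$-action on $F^q_*R$, one has $m \cdot F^q_*(1) = F^q_*(m^q)$. Hence $\varphi(F^q_*(m^q)) = m\varphi(F^q_*1) \in \m$ for every $\varphi \in \DD_q$, so $m^q \in I_q^\DD$; since such elements generate $\m^{[q]}$ and $I_q^\DD$ is an ideal by (1), we are done.

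For (3), I would use associativity of the Cartier product. Given $\varphi \in \DD_{q_1}$ and any $\psi \in \DD_{q_2}$, the product $\psi\cdot \varphi \in \DD_{q_2 q_1} = \DD_{q_1 q_2}$ satisfies
\[
(\psi \cdot \varphi)(F^{q_1 q_2}_* r) \;=\; \psi\bigl(F^{q_2}_* \varphi(F^{q_1}_* r)\bigr)
\]
by the definition $\psi \cdot \varphi = \psi \circ F^{q_2}_* \varphi$. If $r \in I^\DD_{q_1 q_2}$, the left side lies in $\m$ for every such $\psi$, which is exactly the condition that $\varphi(F^{q_1}_* r) \in I_{q_2}^\DD$.

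Part (4) is the main obstacle, as it relates a module-theoretic counting invariant to a length. My plan is to pass to the $k$-vector space $F^q_*R / I_q^\DD$ (where $I_q^\DD$ is viewed as an $F^q_*R$-submodule; this makes sense because $\m \cdot F^q_*R \subseteq I_q^\DD$ by (2), so the quotient is killed by $\m$). Since this is a $k$-vector space, its length as an $R$-module equals its $k$-dimension. On the other hand, the standard argument behind Lemma \ref{Length Lemma}, applied to $R/I_q^\DD$ in place of $R/I^{[q]}$, gives $\lambda_R(F^q_* R / I_q^\DD) = q^{\alpha(R)}\lambda_R(R/I_q^\DD)$. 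So it suffices to exhibit $a_q^\DD = \dim_k(F^q_*R/I_q^\DD)$.

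For this equality, I would set up the natural pairing
\[
\bigl(F^q_*R / I_q^\DD\bigr) \,\times\, \bigl(\DD_q / \m \DD_q\bigr) \;\longrightarrow\; k, \qquad (\overline{F^q_* r},\ \overline{\varphi}) \;\longmapsto\; \overline{\varphi(F^q_*r)},
\]
which is well-defined precisely because $I_q^\DD$ kills every $\varphi \in \DD_q$ modulo $\m$. A choice of $\DD$-summand decomposition of $F^q_*R$ of maximal size furnishes dual families $\varphi_1,\dots,\varphi_{a_q^\DD} \in \DD_q$ and $e_1,\dots, e_{a_q^\DD} \in F^q_*R$ with $\varphi_i(e_j) = \delta_{ij}$; passing modulo $\m$ on both sides shows these descend to $k$-linearly independent families, giving $\dim_k(F^q_*R/I_q^\DD) \geq a_q^\DD$. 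For the reverse inequality, given any $k$-basis of $F^q_*R/I_q^\DD$ coming from lifts $e_1, \dots, e_n \in F^q_*R$ and a family $\varphi_1, \dots, \varphi_n \in \DD_q$ whose images span a dual basis in $\DD_q/\m \DD_q$, the matrix $(\varphi_i(e_j))$ becomes the identity modulo $\m$ and so is invertible in $R$; adjusting the $e_j$'s by the inverse matrix, one obtains a genuine dual pair, which exhibits $n$ $\DD$-summands of $F^q_*R$. This forces $n \leq a_q^\DD$ and completes the identification. The heart of the argument is this Nakayama-style promotion of a modular dual basis to an honest splitting, exactly as carried out in Section 3 of \cite{BST2012}.
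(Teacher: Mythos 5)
The paper does not actually prove this lemma: it is stated as ``a list of basic properties ... which can all be found in Section 3 of \cite{BST2012}'' and cited without argument, so there is no internal proof to compare against. Your reconstruction is correct and follows what is essentially the BST argument. Parts (1)--(3) are exactly right; in particular, in (1) your use of $\mu_a\in\DD_1$ together with the Cartier multiplication is the correct move, since the naive restriction-of-scalars action on $F^q_*R$ would only give closure under multiplication by $q$th powers. For (4), writing $V=F^q_*(R/I_q^\DD)$, the one step you should spell out is the \emph{existence} of $\varphi_1,\ldots,\varphi_n\in\DD_q$ with $\varphi_i(e_j)\equiv\delta_{ij}\ (\mathrm{mod}\ \m)$: this requires that the map $\DD_q/\m\DD_q\to\Hom_k(V,k)$ induced by your pairing be surjective, which is not the same as the pairing merely being well defined. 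It does hold, because the pairing is non-degenerate in the first slot (that is precisely the definition of $I_q^\DD$), so $V$ injects into $\Hom_k(\DD_q/\m\DD_q,k)$, and both spaces are finite-dimensional over $k$ (here one uses that $\DD_q$ is a finitely generated $R$-module, being a submodule of $\Hom_R(F^q_*R,R)$ over a Noetherian ring), so dualizing gives the needed surjection. With that supplied, the Nakayama-style lift to an honest $\DD$-splitting goes through, and combined with the length count $\lambda_R(F^q_*(R/I_q^\DD))=q^{\alpha(R)}\lambda_R(R/I_q^\DD)$ it gives (4).
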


Let $(R,\m, k)$ be local and $\DD$ a Cartier subalgebra. Set $\Gamma_\DD$ to be the semigroup $\{q\mid a_q^{\DD}\not=0\}$. The main result of Blickle, Schwede, and Tucker in \cite{BST2012} is that if $(R,\m, k)$ is local and $\DD$ a Cartier subalgebra, then the limit $\lim_{q\in \Gamma_{\DD}\rightarrow \infty}\frac{a_q^{\DD}}{q^{\alpha(R)+\dim(R)}}=\lim_{q\in \Gamma_\DD\rightarrow \infty} \frac{1}{q^{\dim(R)}}\lambda(R/I_q^{\DD})$ exists, it is called the \emph{F-signature of the pair $(R,\DD)$}, and is denoted $s(R,\DD)$. 

\medskip

Suppose that $R$ is F-finite and not necessarily local. Let $\DD$ be a Cartier algebra of $R$. Suppose that $S\subseteq R$ is a multiplicatively closed set. Since $R$ is F-finite, $S^{-1}\Hom_R(F^q_*R,R)\simeq \Hom_{S^{-1}R}(F^q_*S^{-1}R, S^{-1}R)$. Therefore there is a naturally induced Cartier subalgebra $S^{-1}\DD$ of $S^{-1}R$ such that $(S^{-1}\DD)_q=S^{-1}(\DD_q)$. If $P\in \Spec(R)$ and $s\in R$ we write $\DD_P$ and $\DD_s$ for the induced Cartier subalgebra of $R_P$ and $R_s$ respectively. For each $P\in\Spec(R)$ let $a_q(P,\DD)$ be the $q$th F-splitting number of $(R_P,\DD_P)$, $s_q(P,\DD)=a_q(P,\DD)/q^{\alpha(P)+\tiny{\height}(P)}$, and let $s(P,\DD)=s(R_P,\DD_P)$. Then $s_q(P,\DD):\Spec(R)\rightarrow \R$ converges to $s(P,\DD): \Spec(R)\rightarrow \R$ as $q\in \Gamma_{\DD}\rightarrow \infty $ as a limit of functions. For each $q$ and $P\in \Spec(R)$, let $$I^{\DD}_q(P)=\{r\in R_P\mid \forall \varphi \in (\DD_P)_q, \varphi(F^{q}_*r)\in PR_P\}.$$

\medskip

\begin{Remark}\label{Natural observation} If $R$ is an F-finite ring, $\DD$ a Cartier subalgebra, $N,M\in \N$, $f\in \DD_{q_1}^{N}$, $g\in \DD_{q_2}^{M}$, then the natural map $\underbrace{(g,g,...,g)}_\text{N times}\circ F^{q_2}_* f: F^{q_1q_2}_* R\rightarrow R^{NM}$ is an element of $\DD_{q_1q_2}^{NM}$.

\end{Remark}
\noindent The remark follows from the assumption that if $\varphi \in \DD_{q_1}$ and $\psi\in \DD_{q_2}$, then $\psi\circ F^{q_2}_* \varphi \in \DD_{q_1q_2}$.

\medskip

Our first goal will to be establish a version of Corollary \ref{Global version of Dutta's Lemma part 2} for a pair $(R,\DD)$ when $R$ is an F-finite domain. If $R$ is a not necessarily local F-finite domain and $\DD$ a Cartier subalgebra, we let $\Gamma_\DD=\Gamma_{\DD_0}$. We will now only be interested in containments of $R$-modules $R^{1/q}\subseteq R^{q^{\gamma(R)}}$ when $q\in \Gamma_\DD$. Not only will we need to know that for each $q\in \Gamma_\DD$ that a prime filtration of $R^{1/q}\subseteq R^{q^{\gamma(R)}}$ has only finitely many prime factors up to isomorphism and such prime factors only appears a controlled number of times, but we will need that each of the following $q^{\gamma(R)}$ maps is an element of $\DD_q$, $R^{1/q}\subseteq R^{q^{\gamma(R)}}\xrightarrow{\pi_i} R$,  where $\pi_i$ is the projection onto the $i$th factor.

\begin{Lemma}\label{Dutta Lemma for pairs} Let $R$ be an F-finite domain and $\DD$ a Cartier subalgebra. There exists a finite set of nonzero primes $\mathcal{S}(R, \DD)$ and a constant $C$ such that for every $q\in \Gamma_\DD$,

\begin{enumerate}

\item there is a containment of $R$-modules $R^{1/q}\subseteq R^{q^{\gamma(R)}}$ which is an element of $\DD_q^{q^{\gamma(R)}}$,

\item which has a prime filtration whose prime factors are isomorphic to $R/P$, where $P\in\mathcal{S}(R, \DD)$,

\item and for each $P\in \mathcal{S}(R, \DD)$, the prime factor $R/P$ appears no more than $Cq^{\gamma(R)}$ times in the prime filtration of the containment $R^{1/q}\subseteq R^{q^{\gamma(R)}}$.

\end{enumerate}

\end{Lemma}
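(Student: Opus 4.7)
The plan is to adapt the inductive proof of Corollary \ref{Global version of Dutta's Lemma part 2} to the pair setting, tracking the additional requirement at each stage that the maps implementing the embedding lie in $\mathscr{D}_q$. I proceed by induction on $\dim R$, with the base case $\dim R = 0$ being trivial (take $\mathcal{S}(R,\mathscr{D})=\emptyset$).

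For the inductive step, the first order of business is to produce an initial embedding $R^{1/q_0} \hookrightarrow R^{q_0^{\gamma(R)}}$ whose coordinate projections form a tuple in $\mathscr{D}_{q_0}^{q_0^{\gamma(R)}}$, for a fixed $q_0 \in \Gamma_\mathscr{D}$. Since $q_0 \in \Gamma_\mathscr{D}$ there is a non-zero element of $(\mathscr{D}_0)_{q_0}$; using the $R$-bimodule structure of $\mathscr{D}_{q_0}$ together with the composition laws of a Cartier algebra (Remark \ref{Natural observation}), one assembles the required $q_0^{\gamma(R)}$-tuple, clearing denominators so that each component actually lies in $\mathscr{D}_{q_0}$. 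Because the map becomes an isomorphism after inverting $R\setminus\{0\}$, the cokernel $R^{q_0^{\gamma(R)}}/R^{1/q_0}$ is torsion, hence admits a prime filtration with quotients $R/P_i$ for some non-zero primes $P_1,\ldots,P_h$ of $R$.

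The remainder follows the template of Corollary \ref{Global version of Dutta's Lemma part 2}. Each $R/P_i$ is an F-finite domain of smaller dimension, and the Cartier subalgebra of $R$ induces one on $R/P_i$ by restriction, so the inductive hypothesis furnishes a finite set $\mathcal{S}(R/P_i, \mathscr{D}|_{R/P_i})$ and a constant $C_i$. Set $\mathcal{S}(R,\mathscr{D}) := \bigcup_i \bigl( \mathcal{S}(R/P_i, \mathscr{D}|_{R/P_i}) \cup \{P_i\} \bigr)$ and $C' := \sum_i C_i$. To pass from $q=q_0^n$ to $q=q_0^{n+1}$, take $q_0$-th roots of the filtration already constructed for $q_0^n$, splice in a filtration of each $(R/P_i)^{1/q_0}\subseteq(R/P_i)^{q_0^{\gamma(R/P_i)}}$ coming from the dimensional induction, and use Remark \ref{Natural observation} to compose the coordinate maps so that the result still lies in $\mathscr{D}_{q_0^{n+1}}^{(q_0^{n+1})^{\gamma(R)}}$. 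The telescoping bookkeeping is identical to that in Corollary \ref{Global version of Dutta's Lemma part 2}, and the geometric series $1+\tfrac{1}{p}+\cdots+\tfrac{1}{q}\le 2$, combined with the bound $\gamma(R/P_i)<\gamma(R)$ from Remark \ref{Remark 1}, produces a uniform constant $C$ such that each prime factor $R/P$ appears at most $Cq^{\gamma(R)}$ times in the filtration.

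The main obstacle is the initial construction of the $q_0^{\gamma(R)}$-tuple in $\mathscr{D}_{q_0}$: a single non-zero element of $\mathscr{D}_{q_0}$ is immediate from $q_0 \in \Gamma_\mathscr{D}$, but assembling a full tuple whose components are jointly injective requires one to exploit the bimodule structure of $\mathscr{D}_{q_0}$ and, if necessary, the compositional amplification available inside the Cartier subalgebra (possibly after passing to a suitable power of $q_0$ that still lies in $\Gamma_\mathscr{D}$). Once this base case is in hand, the rest of the proof is a formal translation of the proof of Corollary \ref{Global version of Dutta's Lemma part 2} to the pair setting, with Remark \ref{Natural observation} used repeatedly to ensure that every map produced along the way remains an element of the appropriate $\mathscr{D}_q$.
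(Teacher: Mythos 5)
Your plan diverges from the paper's proof in a way that introduces a genuine gap. The paper does \emph{not} prove this lemma by induction on $\dim R$, and crucially it does \emph{not} require any Cartier structure on the quotient domains $R/P_i$. You write that ``the Cartier subalgebra of $R$ induces one on $R/P_i$ by restriction'' and then invoke the inductive hypothesis for the pair $(R/P_i, \mathscr{D}|_{R/P_i})$. This step fails: a Cartier subalgebra $\mathscr{D} \subseteq \bigoplus \Hom_R(F^q_*R, R)$ has no canonical restriction to $R/P$. A map $\varphi \in \mathscr{D}_q$ only descends to a map $(R/P)^{1/q} \to R/P$ when $\varphi(P^{1/q}) \subseteq P$, i.e., when $P$ is uniformly $\mathscr{D}$-compatible, which is a very strong assumption not satisfied by the nonzero primes $P_i$ arising in the filtration. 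Even if some restriction were available, there is no reason $q_0$ would remain in $\Gamma_{\mathscr{D}|_{R/P_i}}$. The paper's key observation, which your proposal misses, is that the $\mathscr{D}$-membership constraint need only be tracked for the top-level embedding $R^{1/q} \subseteq R^{q^{\gamma(R)}}$; the filtrations of the quotient modules $(R/P_i)^{1/q}$ are pure module-theoretic bookkeeping, for which the ordinary (non-pair) Lemma \ref{Global version of Dutta's Lemma} suffices as a black box. This decouples the $\mathscr{D}$-tracking (done via Remark \ref{Natural observation} on the top row of the splice) from the length estimates (done via the ordinary Dutta lemma on the lower-dimensional quotients), and it is what allows the proof to go through without ever mentioning a Cartier structure on $R/P_i$.

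There is a second, smaller gap: your inner induction $q_0^n \to q_0^{n+1}$ only produces the embedding for the cyclic subsemigroup generated by a single $q_0 \in \Gamma_\mathscr{D}$, not for all of $\Gamma_\mathscr{D}$. The paper instead fixes a finite generating set $\Lambda_\mathscr{D} = \{q_1, \dots, q_m\}$ of $\Gamma_\mathscr{D}$, constructs a $\mathscr{D}$-embedding $R^{1/q_i} \subseteq R^{q_i^{\gamma(R)}}$ for each generator, and then inducts on the total exponent $\sum e_i$ of a factorization $q = \prod q_i^{e_i}$; the set $\mathcal{S}(R,\mathscr{D})$ and constant $C'$ are accumulated over all the $q_i$. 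Your construction of the initial embedding for a fixed generator (localizing at $W = R \setminus \{0\}$, using that $\mathscr{D}_0 \cong R$ to clear denominators, and pulling back the generic isomorphism) matches the paper's and is fine; it is the recursion that needs to be reorganized along the lines above.
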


\begin{proof} Let $\Gamma_\DD$ be generated by $\Lambda_\DD=\{q_1,...,q_m\}$ as a semigroup. For each $q_i\in\Lambda_\DD$ we can fix an embedding $R^{1/q_i}\subseteq R^{q_i^{\gamma(R)}}$ which is an element of $\DD_{q_i}^{q_i^{\gamma(R)}}$ which is an isomorphism when localized at $0$. To see this, let $W=R-0$ and $q\in \Lambda_\DD$ so that $R^{1/q}_W\simeq R_W^{q^{\gamma(R)}}$.  As $R_W$ is a field, $\Hom_{R_W}(R^{1/q}_W, R_W)\simeq R_W^{1/q}$ as an $R_W^{1/q}$-module. Suppose that $0\not=\varphi\in \DD_q$, then $\varphi_W$ generates $\Hom_{R_W}(R^{1/q}_W, R_W)\simeq R_W^{1/q}$ as an $R_W^{1/q}$-module. As $\DD_0=\Hom_R(R,R)$, we have that a $R_W^{1/q}$-multiple of $\varphi_W$ is still an element of $(\DD_W)_q$.  Therefore the isomorphism $R_W^{1/q}\simeq R_W^{q^{\gamma(R)}}$ is an element of $(\DD_W)_q^{q^{\gamma(R)}}$. As $R$ is an F-finite domain, the isomorphism $R_W^{1/q}\simeq R_W^{q^{\gamma(R)}}$ is the localization of an embedding $R^{1/q}\subseteq R^{q^{\gamma(R)}}$ which is an element of $\DD_q^{q^{\gamma(R)}}$. 

\medskip

For each $q\in \Lambda_{\DD}$ we consider a prime filtration of $R^{1/q}\subseteq R^{q^{\gamma(R)}}$, say $R^{1/q}=N_0\subseteq N_1\subseteq \cdots \subseteq N_n= R^{q^{\gamma(R)}}$, say $N_i/N_{i-1}\simeq R/P_i$. Let $\mathcal{S}(R/P_{q,i})$ and $C_{q,i}$ be as in Lemma \ref{Global version of Dutta's Lemma} and let $\mathcal{S}_q(R,\DD)=\bigcup_{i=1}^n \mathcal{S}(R/P_{q,i})\cup \{P_{q,i}\}$ and $\mathcal{S}(R,\DD)=\bigcup_{q\in \Lambda_\DD}\mathcal{S}_q(R,\DD)$. We can now set $C'=\sum C_{q,i}$. Every $q\in \Gamma_\DD$ can be expressed as $\prod_{q_i\in \Lambda_\DD} q_i^{e_i}$ where $e_i\in \N$. We show by induction on $\sum e_i$ that for each $q\in \Gamma_\DD$ there is a containment of $R$-modules $R^{1/q}\subseteq R^{q^{\gamma(R)}}$ which is an element of $\DD_q^{q^{\gamma(R)}}$, which has a prime filtration whose prime factors are isomorphic to $R/P$, where $P\in \mathcal{S}(R, \DD)$, and such a prime factor appears no more than $C'q^{\gamma(R)}\left(1+\frac{1}{p}+\cdots +\frac{1}{q}\right)$ times in the filtration. This trivially holds for $\sum e_i=1$. 

\medskip

Now suppose that $q=\prod_{q_i\in \Lambda_\DD}q_i^{e_i}$ with $\sum e_i>1$. Without loss of generality we may suppose that $e_1\geq 1$ so that $q'=\frac{q}{q_1}\in \Gamma_\DD$. By induction, we can find $R^{1/q'}=N_0\subseteq N_1\subseteq \cdots \subseteq N_m=R^{q'^{\gamma(R)}}$ is a prime filtration of an embedding $R^{1/q'}\subseteq R^{q'^{\gamma(R)}}$ in $\DD_{q'}^{q'^{\gamma(R)}}$, each $N_j/N_{i-j}\simeq R/P_j$ for some $P_j\in \mathcal{S}(R,\DD)$, and such a prime factor appears no more than $C'q'^{\gamma(R)}(1+\frac{1}{p}+\frac{1}{p^2}+\cdots+\frac{1}{q'})$ times in the filtration. Therefore $(R^{q_1^{\gamma(R)}})^{1/q'}=(R^{1/q'})^{q_1^{\gamma(R)}}\subseteq (R^{q'^{\gamma(R)}})^{q_1^{\gamma(R)}}=R^{q^{\gamma(R)}}$ has a prime filtration with prime factors $R/P_{j}$ with $P_j\in \mathcal{S}(R,\DD)$ and such a prime factor appears no mare than $C'q^{\gamma(R)}(1+\frac{1}{p}+\frac{1}{p^2}+\cdots+\frac{1}{q})$ times in the filtration. Furthermore, the prime filtration $R^{1/q_1}=N_{q_1,0}\subseteq N_{q_1,1}\subseteq\cdots \subseteq N_{q_1,n}= R^{q_1^{\gamma(R)}}$ gives the following filtration of $R^{1/q}=(R^{1/q_1})^{1/q'}\subseteq (R^{q_1^{\gamma(R)}})^{1/q'}$, $$(R^{1/q_1})^{1/q'}=N_{q_1,0}^{1/q'}\subseteq N_{q_1,1}^{1/q'}\subseteq \cdots \subseteq R_{q_1,n}^{1/q'}=(R^{q_1^{\gamma(R)}})^{1/q'}.$$ Since $N_{q_1,i}^{1/q'}/N_{q_1,i-1}^{1/q'}\simeq (R/P_{q_1,i})^{1/q'}$, we apply Lemma \ref{Global version of Dutta's Lemma} to know there is a prime filtration of each $N^{1/q'}_{q_1,i-1}\subseteq N^{1/q'}_{q_1,i}$ whose prime factors come from $\mathcal{S}(R/P_{q_1,i})$ and such a prime factor appears no more than $C_iq^{\gamma(R/P_{q_1,i})}\leq C'q^{\gamma(R)-1}$ times in the filtration. Putting all of this information together we get an embedding $R^{1/q}\subseteq R^{q^{\gamma(R)}}$, which is an element of $\DD_q^{q^{\gamma(R)}}$ by Remark \ref{Natural observation}, with a prime filtration whose prime factors come from $\mathcal{S}(R,\DD)$, and such a prime factor appears no more than the following number in the filtration,  $$q_1^{\gamma(R)}C'q'^{\gamma(R)}\left(1+\frac{1}{p}+\cdots +\frac{1}{q'}\right)+\sum_{i=1}^hC_iq^{\gamma(R/P_{q_1,i})}\leq C'q^{\gamma(R)}(1+\frac{1}{p}+\cdots +\frac{1}{q'}+\frac{1}{q})\leq 2C'q^{\gamma(R)}.$$

\end{proof}

Enescu and Yao showed that $a_q(P,\mathscr{C}):\Spec(R)\rightarrow \R$, hence $s_q(P,\mathscr{C})$, is lower semi-continuous on an F-finite ring which is locally equidimensional, (Corollary 2.5, \cite{EnescuYao2011}). We provide a very similar proof that shows $a_q(P,\DD)$, hence $s_q(P,\DD)$, is a lower semicontinuous function for any Cartier subalgebra $\DD$ whenever $R$ is locally equidimensional. It is well known that a function $f:X\rightarrow \R$, $X$ a topological space, is lower semi-continuous if and only if for each $r\in \R$ the sets $f^{-1}((r,\infty))=\{x\in X\mid f(x)>r\}$ is open in $X$. 

\medskip

Lower semi-continuity is a local condition. We may assume $R_P$, hence $R$ is reduced, else $a_q(P,\DD)=0$ and $a_q(-,\DD)$ is trivially lower semi-continuous at $P$. Suppose that $q\in \Gamma_\DD$, $r\in \R$, and let $P\in \{Q\in \Spec(R)\mid a_q(Q,\DD)> r\}$. Then $R_P^{1/q}\simeq R^{a_q(P,\DD)}_P\bigoplus M_P$ is such that each of the $a_q(P,\DD)$ projections $R^{1/q}_P\rightarrow R_P$ is an element of $(\DD_P)_q$. It follows that there is an $s\in R-P$ such that $R_s^{1/q}\simeq R_s^{a_q(P,\DD)}\bigoplus M_s$ and each of the $a_q^\DD(P)$-projections $R_s^{1/q}\rightarrow R_s$ is an element of $(\DD_s)_q$. Hence for all $P'\in D(s)$, $a_q(P',\DD)\geq a_q(P,\DD))>r$ and $\{Q\in \Spec(R)\mid a_q^\DD(Q)>r\}$ is indeed an open set. This shows that $a_q^\DD(P)$ is a lower semi-continuous and so is $s_q(P, \DD)$ since $a_q(P, \DD)$ and $s_q(P, \DD)$ differ only by a constant on connected components of $\Spec(R)$.

\medskip

Consider the following condition we could impose on a Cartier subalgebra $\DD$. 
\begin{eqnarray}
\label{star}(I^{\DD}_q(P))^{[p]}\subseteq I_{qp}^{\DD}(P)
\end{eqnarray}
Suppose $R$ is an F-finite domain and $\DD$ a Cartier subalgebra. Then $r\in I^{\DD}_q(P)$ if and only if $\varphi(r^{1/q})\in PR_P$ for all $\varphi\in \DD_q\subseteq \Hom_R(R^{1/q},R)$. Thus to impose condition (\ref{star}) is to impose that for each $r\in I_q^{\DD}(P)$ that $\psi(r^{1/q})\in PR_P$ for all $\psi\in \DD_{qp}\subseteq \Hom_R(R^{1/qp},R)$. This condition is seen to be satisfied if for each $\psi\in \DD_{qp}$ we require $\varphi\circ i\in \DD_q$ where $i $ is the natural inclusion $R^{1/q}\subseteq R^{1/qp}$.



\begin{Theorem}\label{Uniform Convergence of Pairs} Let $R$ be an F-finite domain and $\DD$ a Cartier subalgebra of $R$. Then the F-signature function which sends $P\in \Spec(R)$ to $s(P, \DD)$ is lower semi-continuous. Moreover, if the the Cartier subalgebra satisfies \emph{(\ref{star})}, then the function $s_q(P, \DD)$ converges uniformly to the F-signature function $s(P,\DD)$ as $q\in\Gamma_\DD\rightarrow \infty$.

\end{Theorem}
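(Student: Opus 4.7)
The plan is to prove the uniform-convergence assertion first, under hypothesis (\ref{star}), and to deduce lower semi-continuity from it using the lower semi-continuity of each $s_q(-,\DD)$ established in the discussion preceding the theorem. Once $s_q(-,\DD)\to s(-,\DD)$ uniformly on $\Spec(R)$, the limit function inherits lower semi-continuity. For the unconditional lower semi-continuity a separate reduction is needed; I comment on this at the end.

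Setting $b_q(P,\DD):=q^{\height(P)}s_q(P,\DD)=a_q(P,\DD)/q^{\alpha(P)}$, Lemma \ref{BSTLemma}(4) gives $b_q(P,\DD)=\lambda(R_P/I_q^{\DD}(P))$, and the argument of the proof of Theorem \ref{F-signature 8} applies verbatim once one produces a constant $C$ independent of $P$ such that, for every $q_1,q_2\in\Gamma_{\DD}$ and every $P\in\Spec(R)$,
\[
\bigl|b_{q_1}(P,\DD)\, q_2^{\height(P)} - b_{q_1 q_2}(P,\DD)\bigr|\;\leq\; C\, q_2^{\height(P)}\, q_1^{\height(P)-1}.
\]
Dividing by $q_2^{\height(P)}$ and letting $q_2\to\infty$ along $\Gamma_{\DD}$ (so that $b_{q_1 q_2}(P,\DD)/q_2^{\height(P)} = q_1^{\height(P)}s_{q_1 q_2}(P,\DD)\to q_1^{\height(P)}s(P,\DD)$) yields $|b_{q_1}(P,\DD)-q_1^{\height(P)}s(P,\DD)|\leq C q_1^{\height(P)-1}$; a further division by $q_1^{\height(P)}$ produces the uniform estimate $|s_{q_1}(P,\DD)-s(P,\DD)|\leq C/q_1$.

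The main obstacle will be proving this Cauchy-type bound uniformly in $P$, and my approach mirrors the proof of Theorem \ref{The main bound}. Fix $q_2\in\Gamma_{\DD}$ and apply Lemma \ref{Dutta Lemma for pairs} to obtain an embedding $R^{1/q_2}\hookrightarrow R^{q_2^{\gamma(R)}}$ lying in $\DD_{q_2}^{q_2^{\gamma(R)}}$, whose cokernel admits a prime filtration in which each factor $R/Q$, $Q\in\mathcal{S}(R,\DD)$, appears at most $Cq_2^{\gamma(R)}$ times. After localizing at $P$, Lemma \ref{BSTLemma}(3) applied to each coordinate projection $\varphi_i\in(\DD_P)_{q_2}$ sends elements of $I_{q_1 q_2}^{\DD}(P)$ into $I_{q_1}^{\DD}(P)$ coordinatewise, while hypothesis (\ref{star}) gives the reverse containment $(I_{q_1}^{\DD}(P))^{[q_2]}\subseteq I_{q_1 q_2}^{\DD}(P)$. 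Stringing the resulting commutative diagrams in the style of Theorem \ref{The main bound} reduces the estimate to bounding $\lambda(R_P/(Q+P^{[q_1]})R_P)$ uniformly in $P$ for $Q\in\mathcal{S}(R,\DD)$, which is exactly the content of Proposition \ref{Main Lemma} and produces an error of the correct order $O(q_2^{\height(P)}q_1^{\height(P)-1})$.

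For the lower semi-continuity without (\ref{star}), a plausible reduction is to replace $\DD$ with the Cartier subalgebra $\widetilde{\DD}$ generated by $\DD$ together with all restrictions $\psi|_{R^{1/q}}$ of maps $\psi\in\DD_{qp^e}$; this $\widetilde{\DD}$ satisfies (\ref{star}) by construction. One would then need to show $s(P,\DD)=s(P,\widetilde{\DD})$ for every $P$, so that the lower semi-continuity of $s(-,\widetilde{\DD})$ obtained from the uniform-convergence case transfers to $s(-,\DD)$. Verifying this equality of F-signatures is the delicate point and will require a careful comparison of the ideals $I_q^{\DD}(P)$ and $I_q^{\widetilde{\DD}}(P)$, or equivalently of the $\DD$-summand and $\widetilde{\DD}$-summand counts in direct-sum decompositions of $F^{q}_* R_P$.
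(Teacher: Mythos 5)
Your plan proves uniform convergence first (assuming (\ref{star})) and then tries to deduce the unconditional lower semi-continuity by passing to an auxiliary Cartier subalgebra $\widetilde{\DD}\supseteq\DD$ that is forced to satisfy (\ref{star}). That last step is a genuine gap, and it is the one place your route diverges essentially from the paper's. Enlarging the Cartier subalgebra shrinks the ideals $I_q^{\DD}(P)$ (more test maps means a stronger vanishing condition), so one only gets $s(P,\widetilde{\DD})\geq s(P,\DD)$ for free; you give no argument why equality should hold, and there is no obvious reason it should. You acknowledge the delicacy, but the proposal leaves the unconditional lower semi-continuity unproven.

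The paper avoids this entirely by observing that lower semi-continuity does not need the two-sided Cauchy bound at all. Using only Lemma \ref{Dutta Lemma for pairs} (the $\DD$-compatible embedding $R^{1/q_1}\hookrightarrow R^{q_1^{\gamma(R)}}$), together with Lemma \ref{BSTLemma}(3) and Proposition \ref{Main Lemma}, one gets a \emph{one-sided} uniform estimate $s_q(P,\DD)-s(P,\DD)<C/q$ for all $P$, with no appeal to (\ref{star}). This one-sided inequality, combined with the lower semi-continuity of each $s_q(-,\DD)$ (shown in the discussion preceding the theorem) and the pointwise convergence $s_q(P,\DD)\to s(P,\DD)$, already forces $s(-,\DD)$ to be lower semi-continuous: if $s(Q,\DD)>s_q(Q,\DD)-C/q$ for all $Q$, then in a neighborhood where $s_q(P,\DD)-s_q(Q,\DD)$ is small one bounds $s(P,\DD)-s(Q,\DD)$ by $s(P,\DD)-s_q(P,\DD)$ plus controllable terms. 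That is the idea you should supply in place of the $\widetilde{\DD}$-reduction. Condition (\ref{star}) is then used only for the \emph{other} inclusion $R^{q^{\gamma(R)}}\hookrightarrow R^{1/q}$, where it guarantees $f_{q_1}$ carries $I_{q_2}^{\DD}(P)$ into $I_{q_1q_2}^{\DD}(P)^{1/q_1}$, yielding the reverse inequality $s(P,\DD)-s_q(P,\DD)<C/q$ and hence the uniform convergence. Your sketch of that half (the commutative diagram, the coordinatewise use of Lemma \ref{BSTLemma}(3), and the reduction to bounding $\lambda(R_P/(Q+P^{[q]})R_P)$ via Proposition \ref{Main Lemma}) is essentially the paper's argument, modulo an indexing slip in which power plays the role of the exponent in the Dutta embedding.
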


\begin{proof}  Let $C, \mathcal{S}(R)$ be as in Lemma \ref{Dutta Lemma for pairs}.  Let $q_1\in \Gamma_\DD$ so that $\DD_{q_1}\not=0$. Let $S(q_1)$ be the cokernel of $R^{1/q_1}\rightarrow R^{q_1^{\gamma(R)}}$. Therefore we have the following short exact sequences $$0\rightarrow R^{1/q_1}\rightarrow R^{q_1^{\gamma(R)}}\rightarrow S(q_1)\rightarrow 0. $$ By Remark \ref{Natural observation} we have exact sequences $$\frac{R^{1/q_1}_P}{I^\DD_{q_1q_2}(P)^{1/q_1}}\rightarrow \frac{R^{q_1^{\gamma(R)}}_P}{I^\DD_{q_2}(P)R_P^{q_1^{\gamma(R)}}}\rightarrow \tilde{S}(q_1)\rightarrow 0,$$ where $\tilde{S}(q_1)$ is the homomorphic image of $S(q_1)_P/I^\DD_{q_1}(P)S(q_1)_P$. Therefore by parts (2) and (4) of Lemma \ref{BSTLemma}, $$\frac{a_{q_2}(P,\DD)}{q_2^{\alpha(P)}}q_1^{\gamma(R)}-\frac{a_{q_1q_2}(P,\DD)}{(q_1q_2)^{\alpha(P)}}q_1^{\alpha(P)}\leq \lambda(\tilde{S}(q_1)_P)\leq \lambda\left(\frac{S(q_1)_P}{I^\DD_{q_2}(P)S(q_1)_P}\right)\leq \lambda\left(\frac{S(q_1)_P}{P^{[q_2]}S(q_1)_P}\right) .$$ By Proposition \ref{Main Lemma} there is a constant $C_1$, independent of $P,q_2$, such that $$\max_{Q\in \mathcal{S}(R)}\lambda\left(R_P/(Q+P^{[q_2]})R_P\right)\leq C_1q_2^{\heigh(P)-1}.$$ It follows that $$\frac{a_{q_2}(P,\DD)}{q_2^{\alpha(P)}}q_1^{\gamma(R)}-\frac{a_{q_1q_2}(P,\DD)}{q_2^{\alpha(P)}}\leq \lambda\left(\frac{S(q_1)_P}{P^{[q_2]}S(q_1)_P}\right)\leq CC_1|\mathcal{S}(R)|q_1^{\gamma(R)}q_2^{\heigh(P)-1}.$$ Dividing both sides of the inequality by $q_1^{\gamma(R)}q_2^{\heigh(P)-1}$ shows that $$s_{q_2}(P,\DD)-s_{q_1q_2}(P,\DD)\leq \frac{CC_1|\mathcal{S}(R)|}{q_2}.$$ Letting $q_1\rightarrow \infty$, and relabeling constants, shows that there is a constant $C$, independent of $P,q$ such that $$s_q(P,\DD)-s(P,\DD)<\frac{C}{q}.$$ 

\medskip

To see that $s(-,\DD)$ is lower semi-continuous at $P\in \Spec(R)$ we may assume $s(P,\DD)>0$, else $s(-,\DD)$ is trivially lower semi-continuous. Thus we may assume that $R_P$ is a strongly F-regular domain. In particular, $s(-,\DD)$ is the limit of lower semicontinuous functions in an open neighborhood of $P$. Thus the lower semicontinuity of the $s_q(-,\DD)$ will now imply the lower semicontinuity of $s(-,\DD)$ since there is a constant $C$ independent of $Q\in \Spec(R)$ such that $s_q(Q,\DD)-s(Q,\DD)<\frac{C}{q}$. Observe that up to this point in the proof we have not used the assumption that the Cartier subalgebra $\DD$ satisfies condition (\ref{star}).

\medskip

  Now assume that the Cartier subalgebra $\DD$ satisfies condition (\ref{star}). Let $C$ and $\mathcal{S}(R)$ be as in Lemma \ref{Global version of Dutta's Lemma} applied to the F-finite domain $R$. Let $S(q_1)$ be the cokernels of the inclusions $R^{q_1^{\gamma(R)}}\xrightarrow{f_{q_1}} R^{1/q_1}$. Then there are short exact sequences $$0\rightarrow R^{q_1^{\gamma(R)}}\xrightarrow{f_{q_1}} R^{1/q_1}\rightarrow S(q_1)\rightarrow 0.$$ We claim that $f_{q_1}(I^\DD_{q_2}(P)R^{q_1^{\gamma(R)}})\subseteq I^\DD_{q_1q_2}(P)^{1/q_1}.$ Let $x\in R^{q_1^{\gamma(R)}}$ and $r\in I^\DD_{q_2}(P)$. Then $f_{q_1}(rx)^{q_1}=r^{q_1}f_{q_1}(x)^{q_1}\in I^\DD_{q_2}(P)^{[q_1]}\subseteq I^\DD_{q_1q_2}(P)$ by part (3) of Lemma \ref{BSTLemma} and the assumption (\ref{star}). Therefore there are induced exact sequences $$\frac{R_P^{q_1^{\gamma(R)}}}{I_{q_2}^\DD(P)R_P^{q_1^{\gamma(R)}}}\xrightarrow{f_{q_1}} \frac{R_P^{1/q_1}}{I_{q_1q_2}^{\DD}(P)^{1/q_1}}\rightarrow \tilde{S}(q_1)\rightarrow 0.$$ Observe that by part (2) of Lemma \ref{BSTLemma} that $P^{[q_2]}R_P$ kills $R_P^{1/q_2}/I_{q_1q_2}^\DD(P)^{1/q_1}$, hence $P^{[q_1]}R_P$ kills $\tilde{S}(q_1)$. Therefore $\tilde{S}(q_1)$ is the homomorphic image of $S(q_1)/P^{[q_2]}S(q_1)_P$. We can now proceed as before to get a constant $C$ independent of $P$ and $q$ such that $$s(P,\DD)-s_q(P,\DD)<\frac{C}{q}.$$ Hence there is a constant $C$ independent of $P\in \Spec(R)$ such that $|s(P,\DD)-s_q(P,\DD)|< \frac{C}{q}$, which implies $s_q(-,\DD)$ converges uniformly to $s(-,\DD)$.

\end{proof}

\begin{center}{Acknowledgements} \end{center}

The author would like to Ian Aberbach for his support during the duration of this project and suggesting the problem, Kevin Tucker for his suggestions which made the proofs much simpler, Yongwei Yao for carefully reading an earlier version and providing feedback, and Dale Cutkosky for his comments.

\bibliography{bibliography}{}
\bibliographystyle{plain}

\end{document}